\documentclass[10pt]{article}

\setlength{\parskip}{2mm plus1mm minus1mm}

\usepackage[nocompress]{cite}
\usepackage{amsmath}
\usepackage{amsthm}
\usepackage{amssymb}
\usepackage{latexsym}
\usepackage{mathdots}
\usepackage{tikz}
\usepackage{titlefoot}
\usepackage{url}
\usepackage{titlesec}
\usepackage{cancel}
\usepackage{rotating}
\usepackage[small, it]{caption}

\usepackage[labelformat=simple,skip=-2pt]{subcaption}

\setlength{\captionmargin}{0.4in}
\captionsetup{belowskip=0pt,aboveskip=6pt}

\newcommand{\Av}{\operatorname{Av}}
\newcommand{\ds}{\displaystyle}
\newcommand{\pa}[1]{\left({#1}\right)}
\newcommand{\f}[2]{\ds\frac{{#1}}{{#2}}}
\newcommand{\CC}{\mathcal{C}}

\newcommand{\GG}{\mathcal{G}}

\newcommand{\LL}{\mathcal{L}}
\newcommand{\calSS}{\mathcal{S}}
\renewcommand{\phi}{\varphi}
\newcommand{\Geom}{\operatorname{Geom}}
\newcommand{\res}[1]{\big{|}_{#1}}
\newcommand{\Simples}{\operatorname{Si}}
\newcommand{\ssm}{\smallsetminus}
\newcommand{\io}{\Av(321,2341,3412,4123)}

\usepackage{color}
\definecolor{light-gray}{gray}{0.8}
\definecolor{dark-gray}{gray}{0.3}
\definecolor{darkblue}{rgb}{0, 0, .4}

\usepackage[bookmarks]{hyperref}
\hypersetup{
        colorlinks=true,
        linkcolor=darkblue,
        anchorcolor=darkblue,
        citecolor=darkblue,
        urlcolor=darkblue,
        pdfpagemode=UseThumbs,
        pdftitle={The Enumeration of Permutations Avoiding 3124 and 4312},
        pdfsubject={Combinatorics},
        pdfauthor={Jay Pantone},
}

\newtheorem{theorem}{Theorem}[section]
\newtheorem{lemma}[theorem]{Lemma}

%

\newcounter{todocounter}

\long\def\symbolfootnote[#1]#2{\begingroup%
\def\thefootnote{\fnsymbol{footnote}}\footnote[#1]{#2}\endgroup}

\setlength{\parindent}{0cm}
\setlength{\textwidth}{6.5in}
\setlength{\textheight}{8in}
\setlength{\topmargin}{0in}
\setlength{\headsep}{0.25in}
\setlength{\headheight}{0.25in}
\setlength{\oddsidemargin}{0in}
\setlength{\evensidemargin}{0in}
\makeatletter
\newcommand{\Rm}[1]{\expandafter\@slowromancap\romannumeral #1@}
\newfont{\footsc}{cmcsc10 at 8truept}
\newfont{\footbf}{cmbx10 at 8truept}
\newfont{\footrm}{cmr10 at 10truept}
\pagestyle{plain}

\renewenvironment{abstract}%
                {
                  \begin{list}{}%
                     {\setlength{\rightmargin}{1in}%
                      \setlength{\leftmargin}{1in}}%
                   \item[]\ignorespaces\begin{small}}%
                 {\end{small}\unskip\end{list}}

\datefoot{\today}
\amssubj{05A05, 05A15}
\keywords{algebraic generating function, permutation class, simple permutation, geometric grid class}

\newpagestyle{main}[\small]{
        \headrule
        \sethead[\usepage][][]
        {\sc The Enumeration of Permutations Avoiding 3124 and 4312}{}{\usepage}}

\title{\sc{The Enumeration of Permutations\\Avoiding 3124 and 4312}}
\author{Jay Pantone\\[-0.25ex]
\small Department of Mathematics\\[-0.5ex]
\small University of Florida\\[-0.5ex]
\small Gainesville, Florida\\[-1.5ex]
}

\titleformat{\section}
        {\large\sc}
        {\thesection.}{1em}{}   

\date{}

\newcommand{\OEISlink}[1]{\href{http://oeis.org/#1}{#1}}
\newcommand{\OEISref}{\href{http://oeis.org/}{OEIS}~\cite{oeis}}
\newcommand{\OEIS}[1]{sequence \OEISlink{#1} in the \OEISref}

\begin{document}
\maketitle

\pagestyle{main}

\begin{abstract}
We find the generating function for the class of all permutations that avoid the patterns 3124 and 4312 by showing that it is an inflation of the union of two geometric grid classes.
\end{abstract}

\section{Preliminaries}

We consider permutations in \emph{one-line notation} so that a permutation of length $n$ is treated as a linear ordering of the symbols of $\{1, 2, \ldots, n\}$. 
For permutations $\tau$ and $\pi$ with
lengths $k$ and $n$ respectively, we write $\tau \leq \pi$ (or ``$\pi$ contains $\tau$'') if there is a set of indices $1 \leq i_1 < i_2 < \cdots < i_k \leq n$ such that the sequence $\pi(i_1), \pi(i_2), \ldots, \pi(i_k)$ is in the same relative order as $\tau$. For example, $4312 \leq 4756231$ because the entries $7523$ in $4756231$ have the same relative order as $4312$. 

A \emph{permutation class} is a set of permutations that is closed downward under this order, i.e., if $\CC$ is a permutation class and if $\pi \in \CC$ and $\tau \leq \pi$, then $\tau \in \CC$. 
We can describe a permutation class by specifying a list of permutations that it avoids. As an example, the set of all strictly increasing permutations may be denoted $\Av(21)$. This paper studies the class of permutations that avoid both $3124$ and $4312$, denoted $\Av(3124,4312)$. In particular, we derive its generating function $\sum a_nx^n$, where $a_n$ is the number of permutations of length $n$ in this class. 


A permutation class is called a \emph{2$\times$4 class} if the minimal elements in the set of permutations not in the class consist exactly of two permutations of length four. Up to symmetries of the permutation containment order --- the group generated by the symmetries reverse, complement, and group-theoretic inverse --- there are 56 different 2$\times$4 classes. Some of these have the same enumeration; it has been shown that there are precisely 38 different enumerations for the 2$\times$4 classes \cite{bona:perm-class-smooth, kremer:forbiddenps, kremer:finite, le:wilf-classes-pairs, kremer:forbidden}. This paper will bring the number of different enumerations that have been found to 29. See Wikipedia~\cite{wiki:enum} for a list of currently known enumerations.

Albert, Atkinson, and Vatter~\cite{albert:inflations-case-studies} enumerated three of the 2$\times$4 classes by studying inflations of geometric grid classes. We show that $\Av(3124,4312)$ is also amenable to the same techniques despite its significantly more complicated structure. Furthermore, in Section~\ref{section:other-classes} we show that these methods cannot be applied to any other unenumerated 2$\times$4 classes.

\section{Simple Permutations and Inflations}

An \emph{interval} of a permutation is a nonempty set of consecutive indices $\{i, i+1, \ldots, i+k\}$ such that the entries $\{\pi(i), \pi(i+1), \ldots, \pi(i+k)\}$  form a set of consecutive integers. A permutation of length $n$ is said to be \emph{simple} if its only intervals are those of length 1 and $n$. In Figure~\ref{figure:interval}, the permutation $31468572$ is not simple because it contains a nontrivial interval at the indices $\{4,5,6,7\}$ with entries $\{5,6,7,8\}$, and the permutation $63814725$ is simple because it contains no nontrivial intervals. We say that the permutations $1$, $12$, and $21$ are simple. We use $\Simples(\CC)$ to denote the set of simple permutations of a class $\CC$.

\begin{figure}
\begin{center}
	\begin{tikzpicture}[scale=.3]
		\draw[ultra thick] ({1-.0355},0)--(8.0355,0);
		\draw[ultra thick] (0,{1-.0355})--(0,8.0355);
		\foreach \x in {1,...,8} {
			\draw[thick] (\x,0)--(\x,-.5);
			\draw[thick] (0,\x)--(-.5,\x);
		}
		\draw[fill=black] (1,3) circle (5pt);
		\draw[fill=black] (2,1) circle (5pt);
		\draw[fill=black] (3,4) circle (5pt);
		\draw[fill=black] (4,6) circle (5pt);
		\draw[fill=black] (5,8) circle (5pt);
		\draw[fill=black] (6,5) circle (5pt);
		\draw[fill=black] (7,7) circle (5pt);
		\draw[fill=black] (8,2) circle (5pt);
		\draw[thick] (3.5, 4.5) rectangle (7.5, 8.5);
		\node at (4.5,-1.5) {$\tau = 31468572$};
	\end{tikzpicture}
	\qquad\qquad\qquad\qquad
	\begin{tikzpicture}[scale=.3]
		\draw[ultra thick] ({1-.0355},0)--(8.0355,0);
		\draw[ultra thick] (0,{1-.0355})--(0,8.0355);
		\foreach \x in {1,...,8} {
			\draw[thick] (\x,0)--(\x,-.5);
			\draw[thick] (0,\x)--(-.5,\x);
		}
		\draw[fill=black] (1,6) circle (5pt);
		\draw[fill=black] (2,3) circle (5pt);
		\draw[fill=black] (3,8) circle (5pt);
		\draw[fill=black] (4,1) circle (5pt);
		\draw[fill=black] (5,4) circle (5pt);
		\draw[fill=black] (6,7) circle (5pt);
		\draw[fill=black] (7,2) circle (5pt);
		\draw[fill=black] (8,5) circle (5pt);
		\node at (4.5,-1.5) {$\rho = 63814725$};
	\end{tikzpicture}
\end{center}
\caption{The permutation $\tau$ is not simple because it contains a nontrivial interval. The permutation $\rho$ is simple because it contains no nontrivial intervals.}
\label{figure:interval}
\end{figure}
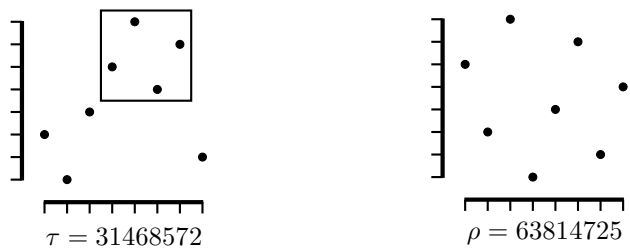

Simple permutations are the foundation from which we can build all other permutations. The \emph{inflation} of a permutation $\pi$ of length $n$ by a sequence of nonempty permutations $\tau_1, \ldots, \tau_n$, denoted $\pi\left[\tau_1, \ldots, \tau_n\right]$, is the permutation
that results from taking each entry $\pi(i)$ and replacing it with an interval that is order-isomorphic to $\tau_i$ such that the intervals themselves are order-isomorphic to $\pi$. For example
	\[3142\left[231,21,123,1\right] = 564\;21\;789\;3.\]
	
Inflations of the permutations $12$ and $21$ are given their own names. The \emph{sum} $\sigma \oplus \tau$ is defined to be the inflation $12\left[\sigma, \tau\right]$, while the \emph{skew sum} $\sigma \ominus \tau$ is defined to be the inflation $21\left[\sigma, \tau\right]$. A permutation $\pi$ is said to be \emph{sum decomposable} if $\pi = \sigma \oplus \tau$ for some $\sigma$ and $\tau$; otherwise $\pi$ is said to be \emph{sum indecomposable}. Analogously, a permutation $\pi$ is said to be \emph{skew decomposable} if $\pi = \sigma \ominus \tau$ for some $\sigma$ and $\tau$, and is otherwise \emph{skew indecomposable}.

Each element of a permutation class can be expressed as an inflation of a unique simple permutation in the class, as given by the following lemma.

\begin{lemma}[Albert and Atkinson~\cite{albert:simple-permutations}]\label{lemma:decomposition}
For every permutation $\pi$ there is a unique simple permutation $\sigma$ such that $\pi=\sigma[\tau_1,\ldots,\tau_k]$.  When $\sigma \neq 12$ and $\sigma \neq 21$, the intervals of $\pi$ that correspond to $\tau_1,\ldots,\tau_k$ are uniquely determined.  When $\sigma=12$ (respectively, $\sigma=21$), the intervals are unique so long as we require the first of the two intervals to be sum (respectively, skew) indecomposable.
\end{lemma}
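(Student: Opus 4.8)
The plan is to build the decomposition from the combinatorics of intervals, following the substitution (modular) decomposition philosophy. First I would record the basic \emph{interval arithmetic}: if $I$ and $J$ are intervals of $\pi$ that \emph{overlap}, meaning $I \cap J \neq \emptyset$ while neither contains the other, then $I \cap J$, $I \cup J$, and $I \ssm J$ are again intervals. This is immediate from the definition, since a set of positions that is simultaneously a block of consecutive positions and a block of consecutive values is preserved under these operations once the two sets genuinely overlap. Call an interval \emph{strong} if it overlaps no other interval. The whole permutation and each singleton are strong, and two strong intervals are always either nested or disjoint; hence the maximal strong intervals properly contained in $\{1,\dots,n\}$ are pairwise disjoint and cover every position, partitioning $\pi$ into consecutive blocks $B_1, \dots, B_m$ (ordered by position). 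Contracting each block to a point yields a \emph{quotient} permutation $\sigma$ of length $m$ with $\pi = \sigma[\tau_1, \dots, \tau_m]$, where $\tau_i$ is the pattern of $B_i$; this $\sigma$ is manifestly determined by $\pi$ alone.

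The crux of existence is to show that this canonical $\sigma$ is simple, increasing, or decreasing. I would argue that $\sigma$ has no nontrivial strong interval: a nontrivial interval $K$ of $\sigma$ lifts to a nontrivial interval $B = \bigcup_{i \in K} B_i$ of $\pi$, and if $B$ were strong it would be a proper strong interval properly containing some $B_i$, contradicting maximality; otherwise $B$ overlaps some interval $J$, and---using that each $B_i$ is strong, so $J$ cannot slice through a block---one checks that $J$ is itself a union of whole blocks, producing an interval of $\sigma$ overlapping $K$. Either way $K$ cannot be strong. A short separate argument then shows that a permutation with no nontrivial strong interval must be simple, increasing, or decreasing. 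I then split into three mutually exclusive cases. If $\pi$ is sum decomposable I take $\tau_1$ to be its first (hence sum indecomposable) sum component and set $\sigma = 12$; if $\pi$ is skew decomposable I do the symmetric thing with $\sigma = 21$; and if $\pi$ is neither, the canonical quotient cannot be increasing or decreasing, since those would force sum or skew decomposability, so it is simple. Because the only simple permutations of length $2$ are $12$ and $21$ and none has length $3$, here the quotient genuinely has length $\geq 4$. This settles existence.

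For uniqueness I would first establish the key structural lemma: if $\pi = \sigma[\tau_1, \dots, \tau_k]$ with $\sigma$ simple and $k \geq 4$, then every interval of $\pi$ either lies inside a single block or is a union of blocks indexed by an interval of $\sigma$; since $\sigma$ is simple the latter forces a single block or all blocks. Consequently each block is a strong interval whose only proper superinterval is all of $\pi$, so the blocks are exactly the maximal proper strong intervals of $\pi$, and therefore both $\sigma$ and the $\tau_i$ agree with the canonical quotient constructed above. The same lemma shows, conversely, that $\pi$ admits a length-$\geq 4$ representation only when its canonical quotient is simple, i.e.\ only when $\pi$ is neither sum nor skew decomposable; this rules out any clash between the three cases and pins down $\sigma$ uniquely. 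In the remaining cases $\sigma = 12$ and $\sigma = 21$ I would use that the sum components of $\pi$ give a unique finest decomposition $c_1 \oplus \cdots \oplus c_r$: a sum indecomposable first block $\tau_1$ must coincide with $c_1$, which forces $\tau_2$ and hence the whole decomposition, and the skew case is symmetric.

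The main obstacle I anticipate is the structural lemma controlling the intervals of an inflation of a simple permutation, together with the verification that the canonical blocks really are the maximal proper strong intervals; this is where the interval arithmetic must be applied most carefully, particularly to show that an overlapping interval cannot cut through a block and must instead align with block boundaries. The monotone cases are comparatively easy, but they require the normalization by the first sum (respectively skew) component precisely because the blocks of a $12$- or $21$-inflation need not be strong, so the naive quotient over-refines and uniqueness must be recovered from the canonical component decomposition instead.
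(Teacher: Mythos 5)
This lemma is not proved in the paper at all: it is quoted, with attribution, from Albert and Atkinson, so there is no in-paper argument to compare yours against. Judged on its own, your proposal is a sound reconstruction of the standard substitution-decomposition proof (essentially the route of the cited source and of modular/strong-interval decomposition theory): the interval arithmetic for overlapping intervals, the laminarity of strong intervals, the canonical quotient by maximal proper strong intervals, the structural lemma that a proper interval of $\sigma[\tau_1,\ldots,\tau_k]$ with $\sigma$ simple of length at least $4$ lies inside a single block, and the reduction of the $12$/$21$ cases to the finest sum (skew) decomposition are all correct and are the right ingredients; your argument that the quotient has no nontrivial strong interval (an overlapping interval of $\pi$ cannot cut a strong block, hence descends to an overlapping interval of $\sigma$) is also correct. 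Two points deserve explicit proofs rather than assertion, since they carry the real weight. First, the claim that a permutation with no nontrivial proper strong interval is simple, increasing, or decreasing is exactly where the classical lemma hides: one shows that if $\pi$ is neither sum nor skew decomposable, then any two distinct maximal proper intervals are disjoint (otherwise their overlap would make the union a proper interval exceeding a maximal one, or equal to all of $\pi$, in which case the three pieces $I\smallsetminus J$, $I\cap J$, $J\smallsetminus I$ exhibit a sum or skew decomposition), so maximal proper intervals are strong; and that in a sum (skew) decomposable permutation each component of the finest decomposition is strong, so the absence of nontrivial strong intervals forces monotonicity. Second, your three cases are mutually exclusive only because a permutation cannot be both sum and skew decomposable, which needs the one-line prefix-of-values argument. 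With those two short verifications written out, the outline closes into a complete proof.
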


\section{Geometric Grid Classes}
\label{section:ggc-info}

Let $M$ be a matrix whose entries are from the set $\left\{-1, 0, +1, \bullet\right\}$. To be consistent with plots of permutations, we index matrices with Cartesian coordinates: first by column (left to right) and then by row (bottom to top), starting at 1. Define the \emph{standard figure} of $M$ to be the drawing on the Cartesian plane that consists of:\begin{itemize}
	\item the line segment from $(k-1,\;\ell-1)$ to $(k,\; \ell)$, if $M_{k,\ell} = +1$,
	\item the line segment from $(k-1,\;\ell)$ to $(k,\;\ell-1)$, if $M_{k,\ell} = -1$, and
	\item the point $\pa{k-\frac{1}{2},\; \ell - \frac{1}{2}}$, if $M_{k,\ell} = \bullet$.
\end{itemize}

Informally, each entry of the matrix becomes a cell that is either empty or contains an increasing line segment, a decreasing line segment, or a single point, depending on the corresponding matrix entry. Herein, we require that if a cell contains a $\bullet$, then the remaining cells in its row and its column are empty.

A permutation class $\CC$ is a \emph{geometric grid class} if there exists a matrix $M$ such that every permutation in $\CC$ can be drawn on the standard figure of $M$ by placing entries anywhere on the line segments, with at most one entry placed in each $\bullet$ cell. In this case, we say that $\CC = \Geom(M)$. Some well-studied geometric grid classes are, for example, those permutations that can be drawn on an \textsf{X}~\cite{waton:phd, vatter:circle} and those permutations that can be drawn on a diamond~\cite{waton:phd, elizalde:x-class}. See Figure~\ref{figure:ggc-ex}.

\begin{figure}
\begin{center}
	\begin{tikzpicture}[scale=1.3]
		\clip (-0.005,-0.005) rectangle (2.005,2.005);
		\draw[thick] (0,0) rectangle (2,2);
		\draw[thick] (1,0)--(1,2);
		\draw[thick] (0,1)--(2,1);
		\draw[line width=.43mm] (0,0)--(2,2);
		\draw[line width=.43mm] (0,2)--(2,0);
		\draw[fill] (.2,1.8) circle (.06) node[below=2pt] {8};
		\draw[fill] (.4,.4) circle (.06) node[below=2pt] {2};
		\draw[fill] (.6,1.4) circle (.06) node[above=2pt] {7};
		\draw[fill] (.8,1.2) circle (.06) node[above=2pt] {5};
		\draw[fill] (1.1,.9) circle (.06) node[below=2pt] {4};
		\draw[fill] (1.3,1.3) circle (.06) node[left=3pt, above=2pt] {6};
		\draw[fill] (1.5,.5) circle (.06) node[above=2pt] {3};
		\draw[fill] (1.7,.3) circle (.06) node[above=2pt] {1};
		\draw[fill] (1.9,1.9) circle (.06) node[left=1pt, below=2pt] {9};
	\end{tikzpicture}
	\qquad\qquad\qquad\qquad
	\begin{tikzpicture}[scale=1.3]
		\clip (-0.005,-0.005) rectangle (2.005,2.005);
		\draw[thick] (0,0) rectangle (2,2);
		\draw[thick] (1,0)--(1,2);
		\draw[thick] (0,1)--(2,1);
		\draw[line width=.43mm] (0,1)--(1,2)--(2,1)--(1,0)--cycle;
		\draw[fill] (.2,1.2) circle (.06) node[above=2pt] {5};
		\draw[fill] (.4,1.4) circle (.06) node[above=2pt] {7};
		\draw[fill] (.6,1.6) circle (.06) node[above=2pt] {8};
		\draw[fill] (.8,.2) circle (.06) node[above=2pt] {2};
		\draw[fill] (1.1,.1) circle (.06) node[above=2pt] {1};
		\draw[fill] (1.3,1.7) circle (.06) node[right=5pt, above=-1pt] {9};
		\draw[fill] (1.5,.5) circle (.06) node[below=2pt] {3};
		\draw[fill] (1.7,1.3) circle (.06) node[above=2pt] {6};
		\draw[fill] (1.9,.9) circle (.06) node[below=2pt] {4};
	\end{tikzpicture}
\end{center}
\caption[Geometric Grid Class Examples]{The permutation $827546319$ is in the class of permutations that can be drawn on an \textsf{X}, denoted $\Geom\pa{\begin{array}{rr}-1&1\\1&-1\end{array}}$. The permutation $578219364$ is in the class of permutations that can be drawn on a diamond, denoted $\Geom\pa{\begin{array}{rr}1&-1\\-1&1\end{array}}$.}
\label{figure:ggc-ex}
\end{figure}
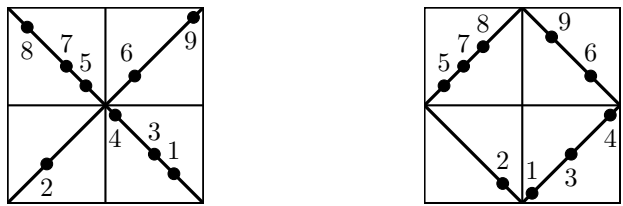

A \emph{consistent orientation} of a geometric grid class is a way of assigning a direction to each line segment in the standard figure such that in any row either all arrows point upward or all arrows point downward and in any column either all arrows point leftward or all arrows point rightward. The \emph{base point} of a cell is the beginning endpoint of its directed line segment, if it has one. 


Figure~\ref{figure:our-classes} shows the three geometric grid classes studied in this paper,
along with 
the consistent orientations that we use for each. It also shows the cell alphabet for each class, which is described below.

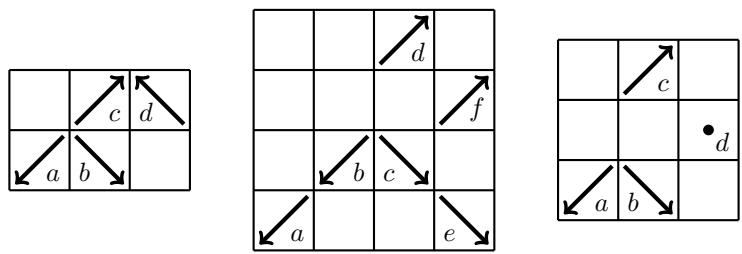
\begin{figure}
\begin{center}
	\begin{tikzpicture}[scale=.8, baseline=(current bounding box.center)]
		\foreach \i in {0,1,2}{
			\draw[thick] (0,\i)--(3,\i);
			\draw[thick] (\i,0)--(\i,2);
		}
		\draw[thick] (3,0)--(3,2);
		\draw[ultra thick,->] (.9,.9)--(.1,.1);
		\draw[ultra thick,<-] (1.9,1.9)--(1.1,1.1);
		\draw[ultra thick,<-] (2.1,1.9)--(2.9,1.1);
		\draw[ultra thick,->] (1.1,.9)--(1.9,.1);
		\node[above left] at (1,0) {$a$};
		\node[above right] at (1,0) {$b$};
		\node[above left] at (2,1) {$c$};
		\node[above right] at (2,1) {$d$};
	\end{tikzpicture}
	\qquad
	\begin{tikzpicture}[scale=.8, baseline=(current bounding box.center)]
		\foreach \i in {0,1,2,3,4}{
			\draw[thick] (0,\i)--(4,\i);
			\draw[thick] (\i,0)--(\i,4);
		}
		\draw[ultra thick,->] (.9,.9)--(.1,.1);
		\draw[ultra thick,->] (1.9,1.9)--(1.1,1.1);
		\draw[ultra thick,->] (2.1,1.9)--(2.9,1.1);
		\draw[ultra thick,->] (3.1,.9)--(3.9,.1);
		\draw[ultra thick,->] (2.1,3.1)--(2.9,3.9);
		\draw[ultra thick,->] (3.1,2.1)--(3.9,2.9);
		\node[above left] at (1,0) {$a$};
		\node[above left] at (2,1) {$b$};
		\node[above right] at (2,1) {$c$};
		\node[above left] at (3,3) {$d$};
		\node[above left] at (4,2) {$f$};
		\node[above right] at (3,0) {$e$};
	\end{tikzpicture}
	\qquad
	\begin{tikzpicture}[scale=.8, baseline=(current bounding box.center)]
		\foreach \i in {0,1,2,3}{
			\draw[thick] (0,\i)--(3,\i);
			\draw[thick] (\i,0)--(\i,3);
		}
		\draw[ultra thick,->] (.9,.9)--(.1,.1);
		\draw[ultra thick,->] (1.1,2.1)--(1.9,2.9);
		\draw[ultra thick,->] (1.1,.9)--(1.9,.1);
		\draw[fill] (2.5,1.5) circle(.08);
		\node[above left] at (1,0) {$a$};
		\node[above right] at (1,0) {$b$};
		\node[above left] at (2,2) {$c$};
		\node[above left] at (3,1) {$d$};
	\end{tikzpicture}
\end{center}
\caption[The geometric grid classes of interest in this paper.]{The geometric grid classes of interest are
$\GG_1$, $\GG_2$, and $\GG_3$, whose standard figures are shown above from left to right. Note that $\GG_1$ also appears in \cite[Figure 11]{albert:inflations-case-studies}.}
\label{figure:our-classes}
\end{figure}

Let $\CC = \Geom(M)$ be a geometric grid class, and let $\Sigma$ be a set (an alphabet) that contains one letter for each nonempty cell of $M$, called the \emph{cell alphabet}. As usual with languages, let $\Sigma^*$ be the set of words formed by the alphabet $\Sigma$. Let $w  = w_1w_2\cdots w_n \in \Sigma^*$. We now describe how to map this word to a permutation in $\CC$.

Pick distances $0 < d_1 < d_2 < \cdots < d_n < 1$. The actual values of these distances does not matter, so long as no two are equal. For each letter $w_i$ in the word $w$, if the cell corresponding to $w_i$ has a directed line segment, then place a permutation entry in cell $w_i$ on the line segment at (infinity-norm) distance $d_i$ from the base point of that cell. If the cell corresponding to $w_i$ has just a single point, place the permutation entry at this point; there can be at most one entry per point. (Note that, as a cell with a $\bullet$ must lie in an otherwise empty row and column, we may ignore $d_i$ and put $w_i$ in the center of its cell.)

The result is a permutation drawn on the standard figure which we can read by labeling the entries in ascending order from bottom to top and then recording these labels from left to right.

Consider the leftmost geometric grid class in Figure~\ref{figure:our-classes}. The cell alphabet is $\Sigma = \{a,b,c,d\}$. Let \linebreak $w = bacddb \in \Sigma^*$. Then (using distances that are evenly spaced), the placement of entries onto the standard figure is as follows.
	\begin{center}
		\begin{tikzpicture}[scale=1.2, baseline=(current bounding box.center)]
			\foreach \i in {0,1,2}{
				\draw[thick] (0,\i)--(3,\i);
				\draw[thick] (\i,0)--(\i,2);
			}
			\draw[thick] (0,0) rectangle (3,2);
			\draw[thick] (3,0)--(3,2);
			\draw[ultra thick,->] (.9,.9)--(.1,.1);
			\draw[ultra thick,<-] (1.9,1.9)--(1.1,1.1);
			\draw[ultra thick,<-] (2.1,1.9)--(2.9,1.1);
			\draw[ultra thick,->] (1.1,.9)--(1.9,.1);
			\node[above left] at (1,0) {$a$};
			\node[above right] at (1,0) {$b$};
			\node[above left] at (2,1) {$c$};
			\node[above right] at (2,1) {$d$};
			\draw[fill] (1.1,.9) circle (.07);
			\draw[fill] (.78,.78) circle (.07);
			\draw[fill] (1.34,1.34) circle (.07);
			\draw[fill] (2.54,1.46) circle (.07);
			\draw[fill] (2.42,1.58) circle (.07);
			\draw[fill] (1.7,.3) circle (.07);
		\end{tikzpicture}
	\end{center}
Numbering the entries in ascending order from bottom to top and then recording these entries from left to right gives the permutation $234165$. Given a geometric grid class $\CC$, the map described above is a surjection $\phi : \Sigma^* \to \CC$. 

Geometric grid classes are especially tractable because for any given geometric grid class $\CC$ we can construct a regular language $\LL$ such that there is a length-preserving bijection from $\LL$ to $\CC$. It is well-known that the generating function for the words in any regular language (by length) is rational. (See, for instance, Flajolet and Sedgewick~\cite[Section I.4 and Appendix A.7]{flajolet:ac}.) This fact is crucial in the proof of the following theorem.

\begin{theorem}[Albert, Atkinson, Bouvel, Ru\v{s}kuc, and Vatter~{\cite[Theorem 8.1]{albert:geometric-grid-classes}}]
	\label{theorem:rat}
	Every geometric grid class has a rational generating function.
\end{theorem}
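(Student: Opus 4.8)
The plan is to promote the surjection $\phi : \Sigma^* \to \CC$ described above to a genuine bijection by restricting it to a well-chosen regular language, and then to invoke the rationality of regular-language generating functions. Since $\phi$ maps words of length $n$ to permutations of length $n$, it is automatically length-preserving; the only thing standing between us and a bijection is injectivity. The first step, therefore, is to understand the fibers of $\phi$ exactly, that is, to decide for which pairs of words $w, w'$ we have $\phi(w) = \phi(w')$.

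To do this I would analyze how the relative position of two placed entries is determined. Fix a consistent orientation, so that every cell has a base point and, within any row (column), all line segments are directed upward or all downward (leftward or rightward). For two letters whose cells lie in different rows \emph{and} different columns, the horizontal and vertical order of the corresponding entries is forced by the cells alone, independently of the chosen distances $0 < d_1 < \cdots < d_n < 1$; such a pair of adjacent letters may be transposed without changing the resulting permutation. If instead the two cells share a row (respectively a column), then the consistent orientation makes the vertical (respectively horizontal) comparison of the two entries agree with their order in the word, up to a global sign, so transposing them \emph{does} alter the permutation. The key structural claim to establish is that these adjacent transpositions of ``independent'' letters generate the entire fiber equivalence: $\phi(w) = \phi(w')$ if and only if $w$ and $w'$ differ by a sequence of commutations of letters whose cells share neither a row nor a column. (Point cells require only the mild bookkeeping that their letters share a row and column with no other nonempty cell, hence commute with everything, and may appear at most once.) This exhibits the fibers as the classes of a partially commutative (trace) monoid on $\Sigma$.

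With the fiber structure identified as trace equivalence, I would select from each class its lexicographically least word and let $\LL$ be the set of all such canonical representatives, together with the constraint that each point-cell letter occurs at most once. By construction $\phi$ restricts to a length-preserving bijection $\LL \to \CC$. It remains to check that $\LL$ is regular, which is the classical fact that the lexicographic normal forms of a trace monoid form a regular language: a word is canonical precisely when it contains no factor in which a letter is preceded by a lexicographically larger letter that commutes with it and with every intervening letter, and this ``no forbidden factor'' condition is recognized by a finite automaton that needs only to remember a bounded amount of information about recently seen letters.

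Finally, a length-preserving bijection forces $\CC$ and $\LL$ to have identical generating functions, and the generating function of any regular language counted by length is rational \cite[Section I.4 and Appendix A.7]{flajolet:ac}; hence $\Geom(M)$ has a rational generating function. I expect the main obstacle to be the structural claim in the second step --- proving that trace equivalence captures the fiber relation exactly, with no accidental coincidences between inequivalent words and no spurious failures of the claimed commutations --- since this is where the geometry of the standard figure, the role of the distances $d_i$, and the consistent orientation must all be reconciled through a careful case analysis.
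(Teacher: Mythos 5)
Your reduction to a regular language via a length-preserving bijection and the rationality of regular languages is indeed the right framework (and is the framework of the cited proof), but your ``key structural claim'' is false, and this is a genuine gap rather than a technicality. The fibers of $\phi : \Sigma^* \to \CC$ are \emph{not} the trace-equivalence classes generated by commuting letters whose cells share neither a row nor a column. There is a second source of coincidences, independent of commutations: an entry can sometimes be moved to a \emph{different cell} entirely without changing the permutation, so that two words using different multisets of letters map to the same permutation. The paper's own example exhibits this: for the class $\GG_1$ one has $\phi(bacddb) = \phi(aaaddb)$, and these two words are certainly not related by any sequence of adjacent transpositions of independent letters. Consequently, if you take $\LL$ to be the set of lexicographic normal forms of the trace monoid (which is indeed regular, as you say), the restriction $\phi\res{\LL}$ is still many-to-one, your language overcounts $\CC$, and the argument does not produce the generating function of the class --- only an upper bound for it.

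The delicate part of the actual proof of this theorem, as the paper emphasizes, is precisely the elimination of these cell-reassignment duplicates: one must show that after discarding, from each fiber, all but one representative (e.g.\ the least in a suitable order refined by preferences such as ``most entries in the first column, then the second column, \dots''), the surviving set of words is \emph{still} a regular language. In the cited work of Albert, Atkinson, Bouvel, Ru\v{s}kuc, and Vatter this is established by a non-constructive argument, and in the present paper the corresponding languages $\LL_1$, $\LL_2$, $\LL_3$ are built by hand, with explicit rules (``if a word begins with $a^*c$, the entry corresponding to the $c$ could be moved into cell $a$, so forbid such words,'' etc.) that have no analogue in your proposal. To repair your argument you would need either to prove that the full fiber relation (commutations \emph{together with} cell moves) admits a regular set of canonical representatives --- which is the real content of the theorem --- or to import that fact from the literature, at which point the trace-monoid normal-form machinery you set up is no longer doing the work.
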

	
Furthermore, this result extends to certain subsets of permutations in a geometrically grid class.

\begin{theorem}[Albert, Atkinson, Bouvel, Ru\v{s}kuc, and Vatter~{\cite[Theorem 9.1]{albert:geometric-grid-classes}}]
	The simple, sum indecomposable, and skew indecomposable permutations in every geometric grid class each have rational generating functions.
\end{theorem}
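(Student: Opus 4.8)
The plan is to deduce everything from the regular-language encoding of a geometric grid class together with the closure properties of regular languages. Fix a geometric grid class $\CC = \Geom(M)$, and let $\LL \subseteq \Sigma^*$ be the regular language, over the cell alphabet $\Sigma$, that is in length-preserving bijection with $\CC$ (a regular refinement of the surjection $\phi$). Since regular languages are closed under intersection and complement, it suffices to prove the three companion statements that the sets of words in $\LL$ whose images under $\phi$ are, respectively, \emph{sum decomposable}, \emph{skew decomposable}, and \emph{non-simple} are each regular: intersecting $\LL$ with the complements of these languages then yields regular languages encoding the sum indecomposable, skew indecomposable, and simple permutations of $\CC$, and the rationality of the generating function (by length) of any regular language, together with the length-preservation, delivers the result.

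First I would dispatch the decomposability conditions, where a symmetry reduction helps: geometric grid classes are closed under the reverse, complement, and inverse symmetries, and reverse-complement interchanges $\oplus$ and $\ominus$, so the skew decomposable case follows from the sum decomposable case applied to a symmetric copy of $\CC$. To recognize sum decomposability, recall that $\phi(w) = \alpha \oplus \beta$ exactly when some left-to-right prefix of the entries of $\phi(w)$ occupies precisely the smallest values; geometrically this is a monotone staircase \emph{sum cut} of the standard figure. Because $M$ has only finitely many cells, there are only finitely many combinatorial types of such a cut, since each cell must lie entirely below the cut, lie entirely above it, or be split by an increasing segment at a threshold (a decreasing cell cannot be split, as its left entries lie above its right entries). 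For each fixed cut type, the condition that $w$ admits that cut can be verified by a finite automaton that reads $w$ once and maintains only the bounded information of how the entries placed so far in each cell sit relative to the cut; taking the union over the finitely many cut types keeps the language regular.

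The genuine obstacle is recognizing \emph{non-simplicity}, since a nontrivial interval is a block that is simultaneously contiguous in position and in value and may involve unboundedly many entries spread across many cells. The key will be a lemma asserting that, in a geometric grid permutation, the \emph{boundary} of any interval interacts with only boundedly many cells --- cell by cell, the interval either avoids a cell, fills the portion of a segment lying between two thresholds, or occupies a corner region --- so that the existence of a witnessing interval is again a property of one of finitely many ``interval templates'' and can be tested by a finite automaton tracking bounded state. Establishing this localization of intervals, and verifying that each template yields a regular condition on $w$, is where essentially all of the work lies; once it is in hand, the union over templates is regular, its complement intersected with $\LL$ encodes the simple permutations of $\CC$, and the rational generating function follows exactly as above.
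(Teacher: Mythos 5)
This statement is not proved in the paper at all: it is quoted from Albert, Atkinson, Bouvel, Ru\v{s}kuc, and Vatter~\cite{albert:geometric-grid-classes} (their Theorem~9.1) and used as a black box, so the only question is whether your sketch stands on its own. Your overall strategy is in fact the strategy of the cited source: encode the class by a regular language over the cell alphabet, show that the words mapping to sum decomposable, skew decomposable, and non-simple gridded permutations form regular languages, and then invoke closure of regular languages under complementation and intersection together with rationality of the length generating function of a regular language. The symmetry reduction from skew decomposability to sum decomposability is also legitimate, since applying reverse, complement, or inverse to a geometric grid class yields another geometric grid class (with a transformed matrix).

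The gap is that the step you yourself describe as ``where essentially all of the work lies'' is precisely the content of the theorem, and it is asserted rather than proved. Two concrete points. First, the letters of $w$ are placed at increasing distances $d_1 < d_2 < \cdots < d_n$ from the base points, so reading $w$ left to right does not correspond to scanning the permutation by position or by value; the claim that a finite automaton can maintain ``bounded information about how the entries placed so far sit relative to the cut'' (or relative to a candidate interval) therefore needs an actual construction. In particular, a sum cut or an interval that meets two cells sharing a row or a column imposes a \emph{common} positional or value threshold on those cells, and this threshold interacts with the single global distance sequence; showing that this coordination can be tracked with finitely many states is exactly the difficulty, not a routine bookkeeping remark. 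Second, your ``localization lemma'' for intervals --- that an interval meets each cell in one of boundedly many ways, and that each resulting template is a regular condition on $w$ --- is only stated. It is true that an interval intersects each cell in a block of that cell's entries contiguous in both position and value, and that there are finitely many cell-level shapes, but converting the existence of such a configuration (with its cross-cell threshold consistency) into a regular language requires the careful argument that occupies the relevant section of~\cite{albert:geometric-grid-classes}. As written, your proposal reduces the theorem to an unproved lemma plus an unverified automaton construction, so it does not yet constitute a proof.
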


In~\cite{albert:geometric-grid-classes}, the authors describe the two obstacles which must be overcome in order to restrict the domain of $\phi$ to some regular language $\LL \subseteq \Sigma^*$ such that $\phi\res{\LL}$ is a bijection.\begin{description}
	\item{(1)} There are pairs of cells whose letters ``commute'', resulting in two words that map to the same permutation. These are the pairs that share neither a row nor a column. In the example above, since the cells $b$ and $d$ commute, the words $bacdbd$ and $bacbdd$ map to the same permutation. Such words have their entries in the same cells, but the entries are shifted around within each cell.
	\item{(2)} We can sometimes move entries between cells to produce the same word. In the example above, we can move the first three entries corresponding to $bac$ all into cell $a$ to yield the same permutation, i.e., $\phi(bacddb) = \phi(aaaddb)$. 
\end{description}

The first obstacle is easily dealt with: for each pair of commuting cells, we pick a preferred order, e.g., $bd$ instead of $db$, and we forbid all words that have any consecutive occurrence of a non-preferred order. The second issue is more delicate. In~\cite{albert:geometric-grid-classes}, a non-constructive proof is given to show that eliminating such duplicate words leaves a regular language. Since this proof does not lead to the construction of the regular language, we are required to find each of the regular languages by hand. Then, using the \texttt{Automata}~\cite{automata} package of \texttt{GAP}~\cite{gap}, we recover the rational generating function that counts each regular language. The code used to generate the results in this paper can be found on the author's website\footnote{At the time of publication, the author's website is located at \url{http://jaypantone.com}.}.

In Section \ref{section:simples} we show that the simple permutations of $\Av(3124,4312)$ are exactly the simple permutations in the union of $\GG_1$ and $\GG_2$. In Sections \ref{section:g1}, \ref{section:g2}, and \ref{section:g3} we construct the regular languages which are in bijection with all permutations and the simple permutations in each class $\GG_1$, $\GG_2$, and $\GG_3$, respectively. Then, we determine exactly how the simple permutations in each class may be inflated to yield permutations still in $\Av(3124,4312)$.

Section \ref{section:final-result} proves the following theorems about the rational generating function for $\Simples(\Av(3124,4312))$ and the algebraic generating function for $\Av(3124,4312)$.

\textbf{Theorem~\ref{theorem:simple-enum}}. The simple permutations in $\Av(3124,4312)$ are counted by the generating function
	\[
		S(x) =\frac{x - 2x^2 - 5x^3 + 12x^4 + x^5 - 8x^6 - 3x^7}{{\left(1 - 2x\right)} {\left(1 - x - x^2\right)}^{2}}.
	\]
	
\textbf{Theorem~\ref{theorem:class-enum}}. The permutations in $\Av(3124,4312)$ are counted by the generating function
	\[
		f(x) = \f{\pa{8x^5 - 16x^4 + 28x^3 - 26x^2 + 9x -1} + \sqrt{1-4x}\pa{2x^4-8x^3+14x^2-7x+1}}{2x^2(1-6x+9x^2-4x^3)}.
	\]

Lastly, in Section \ref{section:other-classes} we explore the applicability of this method to other permutation classes.

\section{The Simple Permutations of $\Av(3124,4312)$}\label{section:simples}

Consider the three geometric grid classes $\GG_1$, $\GG_2$, and $\GG_3$ whose standard figures are shown in Figure~\ref{figure:our-classes}. It is clear that $\GG_3$ is the intersection $\GG_1 \cap \GG_2$. This will be useful when we count the permutations in $\Av(3124,4312)$; those that arise as inflations of simple permutations that lie in both $\GG_1$ and $\GG_2$ will be double-counted, and hence in order to compensate, those that arise as inflations of simple permutations that lie in $\GG_3$ must be subtracted.

	
Whereas simple permutations of the classes examined by Albert, Atkinson, and Vatter~\cite{albert:inflations-case-studies} were each contained in a \emph{single} geometric grid class, the class studied here has simple permutations contained in the {union of two} geometric grid classes. This fact considerably lengthens both the proof of our upcoming Theorem~\ref{theorem:simples} and the subsequent analysis. Classes with this property have been enumerated before -- for example, by Albert, Atkinson, and Brignall~\cite{albert:enumeration-three-classes-grid} -- though the exact techniques have differed.

In the following arguments, we make frequent use of Albert's \texttt{PermLab} application~\cite{albert:permlab} to help us determine valid permutation configurations. We use permutation diagrams, comprised of a permutation plotted on top of a grid of cells. A cell is white if we are allowed to insert a new entry into that cell without creating an occurrence of $3124$ or $4312$. A cell is shaded dark gray if insertion into that cell would create a forbidden pattern, i.e., a $3124$ or $4312$. A cell is shaded light gray if we have specifically forbidden insertion into that cell as part of an argument, e.g., if we assume a particular entry is the maximal entry, then we can forbid insertion into all cells above it.

In order to talk about certain regions in a permutation diagram, we define the \emph{rectangular hull} of a set $S$ of points to be the smallest axis-parallel rectangle in the plane that contains all points of $S$. In particular, the rectangular hull of $S$ frequently contains additional points not in $S$. In our case, the points are entries in a permutation diagram. 

\begin{theorem}\label{theorem:simples} The simple permutations of $\Av(3124,4312)$ coincide with the simple permutations of $\GG_1 \cup \GG_2$. 
\end{theorem}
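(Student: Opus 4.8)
The plan is to prove the two inclusions separately, since the statement asserts an equality of two sets of simple permutations. The easy direction is to show that every simple permutation of $\GG_1 \cup \GG_2$ lies in $\Av(3124,4312)$; for this it suffices to check that $\GG_1 \cup \GG_2 \subseteq \Av(3124,4312)$, which reduces to verifying that neither $3124$ nor $4312$ can be drawn on the standard figures of $\GG_1$ and $\GG_2$. Since a geometric grid class is itself a permutation class (closed downward), containment of the whole class in $\Av(3124,4312)$ is equivalent to checking that $3124$ and $4312$ are themselves unobtainable as permutations gridded by $M_1$ and $M_2$, a finite case analysis on which cells the four entries could occupy. This also immediately shows $\GG_3 = \GG_1 \cap \GG_2$ is contained in the class, as noted in the text.

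The substantive direction is the reverse inclusion: every simple permutation $\sigma \in \Av(3124,4312)$ can be drawn on the standard figure of $\GG_1$ or of $\GG_2$. The approach I would take is structural and constructive. Fix a simple $\sigma$ avoiding $3124$ and $4312$. I would use the permutation-diagram / \texttt{PermLab} methodology set up just before the theorem: start by locating a few distinguished entries (for instance the leftmost entry, the bottommost entry, or the position of the largest entry) and use the avoidance of $3124$ and $4312$ to shade out the cells into which nothing can be inserted. The key is that each forbidden pattern, combined with simplicity (no nontrivial intervals), forces the remaining entries of $\sigma$ to cluster into a small number of monotone "staircase" regions. I would argue that these regions, once identified, correspond exactly to the increasing and decreasing cells of the standard figures in Figure~\ref{figure:our-classes}, so that $\sigma$ is griddable by $M_1$ or $M_2$.

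Concretely, the main work is a dichotomy. I would first extract the global monotone skeleton forced by the two length-four patterns, then show that $\sigma$ splits into one of two shapes according to some binary structural feature (e.g.\ whether a certain entry sits above or below a diagonal, or whether $\sigma$ has a particular small sub-configuration). In one branch the shadings collapse $\sigma$ onto the four oriented cells of $\GG_1$; in the other they collapse it onto the six oriented cells of $\GG_2$. Simplicity is used repeatedly to rule out degenerate placements — any entry that failed to lie on one of the allowed segments would, together with its neighbors, create a nontrivial interval or a forbidden pattern.

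The hard part will be the reverse inclusion, and within it the exhaustive case analysis that guarantees \emph{every} simple $\sigma$ falls into one of the two griddable shapes with no exceptional configurations slipping through. The difficulty is precisely the feature the authors flag: unlike the earlier case studies where simples lived in a single grid class, here the simples are spread across the union $\GG_1 \cup \GG_2$, so the argument must carefully track which pattern avoidance forces which branch, handle the overlap $\GG_3$, and bound the finitely many small simples (of lengths up to $3$ or $4$) by hand before the staircase structure takes over for longer permutations. Managing this bookkeeping — ensuring the shaded regions genuinely force the claimed staircase and that no hybrid shape avoiding both patterns escapes both grids — is where the real effort lies.
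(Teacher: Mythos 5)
Your first direction is fine and matches the paper: since geometric grid classes are downward closed, it suffices to check that neither $3124$ nor $4312$ can be drawn on the standard figures of $\GG_1$ and $\GG_2$, giving $\Simples(\GG_1\cup\GG_2)\subseteq\Simples(\Av(3124,4312))$. The problem is the reverse inclusion, where your proposal describes a style of argument but never executes it, and the entire content of the theorem lives there. You correctly guess that one should fix distinguished entries, shade cells using the forbidden patterns, invoke simplicity to kill degenerate placements, and split into a dichotomy whose branches land in $\GG_1$ and $\GG_2$ respectively --- but you leave every one of these ingredients as a placeholder. You do not say which distinguished entry to use, what the dichotomy actually is (``whether a certain entry sits above or below a diagonal, or whether $\sigma$ has a particular small sub-configuration'' is not a criterion one can case on), and you do not carry a single branch through to a gridded shape. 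Since you yourself identify this case analysis as ``where the real effort lies,'' the proposal is a plan for a proof rather than a proof, and it cannot be verified or falsified as it stands.

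For comparison, the paper's argument pins these choices down concretely. One splits $\pi$ around the maximum entry $n$ into $\pi_L$ and $\pi_R$, notes both must avoid $312$ and both are nonempty by simplicity, and then cases on whether $\pi_R$ avoids $132$ (so that $\pi_R$ is a wedge permutation) or contains a $132$; the first case splits further according to whether $\pi_R$ is a single entry, is strictly decreasing with more than one entry, or is not strictly decreasing. Within each case the forcing device used over and over is the ``unsplittable $21$ pattern'': if some cell contained a descent, choosing the `$2$' topmost, the `$1$' bottommost, and a separating entry as far left as possible produces an interval of length $3$ that cannot be split, contradicting simplicity; this, together with direct $3124$/$4312$ arguments, forces each region to be monotone or empty, and the surviving diagrams are exactly drawable on the standard figures of $\GG_1$ or $\GG_2$. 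Without specifying the splitting entry, the actual dichotomy, and this forcing mechanism --- and then checking all the resulting configurations --- your outline does not close the gap; it restates the goal (``no hybrid shape avoiding both patterns escapes both grids'') rather than proving it.
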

\begin{proof}
	It is clear by inspection that the permutations $3124$ and $4312$ cannot be drawn on the standard figures of $\GG_1$ or $\GG_2$. Therefore, $\Simples(\GG_1 \cup \GG_2) \subseteq \Simples(\Av(3124,4312))$. The reverse inclusion is much harder to show. Though the ideas in the proof are not particularly deep, we need to consider many cases.
	
	Let $\pi \in \Simples(\Av(3124,4312))$ have length $n$. 
	Let $\pi_L$ be the entries to the left of $n$ and let $\pi_R$ be the entries to the right of $n$, as in Figure~\ref{figure:split}. In order to avoid both $3124$ and $4312$ patterns, $\pi_L$ and $\pi_R$ must avoid $312$ patterns. Also, both $\pi_L$ and $\pi_R$ must be nonempty; otherwise $\pi$ begins or ends with $n$ and is not simple. 
	
	
	
	\begin{figure}
		\minipage{0.5\textwidth}
			\begin{center}
				\begin{tikzpicture}
					\draw[fill] (0,0) circle (.1) node[above=2pt] {$n$};
					\draw[ultra thick] (-1.5,-1.5) rectangle (-.2,-.2) node[midway] {$\pi_L$};
					\draw[ultra thick] (1.5,-1.5) rectangle (.2,-.2) node[midway] {$\pi_R$};
				\end{tikzpicture}
			\end{center}
			\caption{The decomposition of $\pi$ around the entry $n$ into $\pi_L$ and $\pi_R$.}
			\label{figure:split}
		\endminipage\hfill
		\minipage{0.5\textwidth}
			\begin{center}
				\begin{tikzpicture}[scale=1, baseline=(current bounding box.center)]
					\foreach \i in {0,1}{
						\draw[thick] (0,\i)--(1,\i);
						\draw[thick] (\i,0)--(\i,2);
					}
					\draw[thick] (0,2)--(1,2);
					\draw[ultra thick] (.1,.9)--(.9,.1);
					\draw[ultra thick] (.1,1.1)--(.9,1.9);
				\end{tikzpicture}
			\end{center}
			\caption{One class of wedge permutations, denoted $\Av(132,312)$.}
			\label{figure:class-parts}
		\endminipage
	\end{figure}
	
	\textbf{Case 1:} Assume $\pi_R$ does not contain the pattern $132$. Then, $\pi_R \in \Av(132,312)$ which implies that $\pi_R$ is a wedge permutation of the shape shown in Figure~\ref{figure:class-parts}. 
		
		\emph{Case 1a:} $\pi_R$ contains only the entry $\pi(n)$\\
		In order for $\pi$ to be simple, the rectangular hull of $n$ and $\pi(n)$ must be split to the left in the cell marked $A$ in Figure~\ref{figure:simples-case1-2}. We claim that any entries in cell $A$ must be increasing. To see this, assume there is a descent (a $21$ pattern) in cell $A$ and choose the `$2$' to be the topmost possible entry and the `$1$' to be the bottommost possible entry for the chosen $2$. This gives the diagram in Figure~\ref{figure:simples-case1-3}. The rectangular hull of the $21$ pattern formed by the leftmost two entries shown in Figure~\ref{figure:simples-case1-3} must be split to the left. Assume the separating entry is as far to the left as possible. Then, as seen in Figure~\ref{figure:simples-case1-4}, there exists an interval that cannot be split. Hence, any entries in cell $A$ must be increasing. The argument that we just made relating to the unsplittable $21$ pattern will be used many times in this proof. For its remainder, we will simply refer to an ``unsplittable $21$ pattern'' to mean that if we choose the `$2$' to be as high as possible and the `$1$' to be as low as possible, and then choose a separating entry as far to the left as possible, we get an interval of length 3 that cannot be split.
	
	In addition to being increasing, cell $A$ must also be nonempty. Consider an entry in cell $A$ that is as low as possible as possible, yielding Figure~\ref{figure:simples-case1-5}. Cell $B$ is empty because cell $A$ was assumed to be increasing. If $C$ has any descent, then there is an unsplittable $21$ pattern. If cell $D$ has an ascent, then there is a $3124$ patten. Hence, $B$ is empty, $C$ is increasing, and $D$ is decreasing. This leaves us with the diagram in Figure~\ref{figure:simples-case1-6}. It is clear that any permutation drawn on this figure lies in both $\GG_1$ and $\GG_2$. 
		
		\begin{figure}
		        \centering
		        \begin{subfigure}[b]{0.19\textwidth}
				\begin{center}
					\begin{tikzpicture}[scale=.4]
						\filldraw[light-gray](0,2) rectangle (1,3);
						\filldraw[light-gray](1,1) rectangle (2,2);
						\filldraw[light-gray](1,2) rectangle (2,3);
						\filldraw[light-gray](2,0) rectangle (3,1);
						\filldraw[light-gray](2,1) rectangle (3,2);
						\filldraw[light-gray](2,2) rectangle (3,3);
						\filldraw[light-gray](1,0) rectangle (2,1);
						\draw[black, fill=black] (1,2) circle (0.2);
						\draw[black, fill=black] (2,1) circle (0.2);
						\draw[thick](0,0)--(0,3);
						\draw[thick](0,0)--(3,0);
						\draw[thick](1,0)--(1,3);
						\draw[thick](0,1)--(3,1);
						\draw[thick](2,0)--(2,3);
						\draw[thick](0,2)--(3,2);
						\draw[thick](3,0)--(3,3);
						\draw[thick](0,3)--(3,3);
						\node at (.5,1.5) {$A$};
					\end{tikzpicture}
				\end{center}	
				\caption{}
				\label{figure:simples-case1-2}
		        \end{subfigure}%
		        \begin{subfigure}[b]{0.19\textwidth}
				\begin{center}
					\begin{tikzpicture}[scale=.4]
						\filldraw[light-gray](0,3) rectangle (1,4);
						\filldraw[light-gray](0,4) rectangle (1,5);
						\filldraw[dark-gray](1,0) rectangle (2,1);
						\filldraw[dark-gray](1,1) rectangle (2,2);
						\filldraw[light-gray](1,3) rectangle (2,4);
						\filldraw[light-gray](1,4) rectangle (2,5);
						\filldraw[dark-gray](2,0) rectangle (3,1);
						\filldraw[light-gray](2,1) rectangle (3,2);
						\filldraw[dark-gray](2,2) rectangle (3,3);
						\filldraw[light-gray](2,3) rectangle (3,4);
						\filldraw[light-gray](2,4) rectangle (3,5);
						\filldraw[dark-gray](3,0) rectangle (4,1);
						\filldraw[light-gray](3,1) rectangle (4,2);
						\filldraw[light-gray](3,2) rectangle (4,3);
						\filldraw[light-gray](3,3) rectangle (4,4);
						\filldraw[light-gray](3,4) rectangle (4,5);
						\filldraw[light-gray](4,0) rectangle (5,1);
						\filldraw[dark-gray](4,1) rectangle (5,2);
						\filldraw[light-gray](4,2) rectangle (5,3);
						\filldraw[light-gray](4,3) rectangle (5,4);
						\filldraw[light-gray](4,4) rectangle (5,5);
						\draw[black, fill=black] (1,3) circle (0.2);
						\draw[black, fill=black] (2,2) circle (0.2);
						\draw[black, fill=black] (3,4) circle (0.2);
						\draw[black, fill=black] (4,1) circle (0.2);
						\draw[thick](0,0)--(0,5);
						\draw[thick](0,0)--(5,0);
						\draw[thick](1,0)--(1,5);
						\draw[thick](0,1)--(5,1);
						\draw[thick](2,0)--(2,5);
						\draw[thick](0,2)--(5,2);
						\draw[thick](3,0)--(3,5);
						\draw[thick](0,3)--(5,3);
						\draw[thick](4,0)--(4,5);
						\draw[thick](0,4)--(5,4);
						\draw[thick](5,0)--(5,5);
						\draw[thick](0,5)--(5,5);
					\end{tikzpicture}
				\end{center}
				\caption{}
				\label{figure:simples-case1-3}
		        \end{subfigure}
		        \begin{subfigure}[b]{0.19\textwidth}
				\begin{center}
					\begin{tikzpicture}[scale=.4]
						\filldraw[light-gray](0,2) rectangle (1,3);
						\filldraw[light-gray](0,3) rectangle (1,4);
						\filldraw[dark-gray](0,4) rectangle (1,5);
						\filldraw[light-gray](0,5) rectangle (1,6);
						\filldraw[dark-gray](1,0) rectangle (2,1);
						\filldraw[dark-gray](1,1) rectangle (2,2);
						\filldraw[light-gray](1,4) rectangle (2,5);
						\filldraw[light-gray](1,5) rectangle (2,6);
						\filldraw[dark-gray](2,0) rectangle (3,1);
						\filldraw[dark-gray](2,1) rectangle (3,2);
						\filldraw[light-gray](2,4) rectangle (3,5);
						\filldraw[light-gray](2,5) rectangle (3,6);
						\filldraw[dark-gray](3,0) rectangle (4,1);
						\filldraw[dark-gray](3,2) rectangle (4,3);
						\filldraw[dark-gray](3,3) rectangle (4,4);
						\filldraw[light-gray](3,5) rectangle (4,6);
						\filldraw[dark-gray](4,0) rectangle (5,1);
						\filldraw[light-gray](4,1) rectangle (5,2);
						\filldraw[light-gray](4,2) rectangle (5,3);
						\filldraw[light-gray](4,3) rectangle (5,4);
						\filldraw[light-gray](4,4) rectangle (5,5);
						\filldraw[light-gray](4,5) rectangle (5,6);
						\filldraw[light-gray](5,0) rectangle (6,1);
						\filldraw[dark-gray](5,1) rectangle (6,2);
						\filldraw[light-gray](5,2) rectangle (6,3);
						\filldraw[light-gray](5,3) rectangle (6,4);
						\filldraw[light-gray](5,4) rectangle (6,5);
						\filldraw[light-gray](5,5) rectangle (6,6);
						\draw[black, fill=black] (1,3) circle (0.2);
						\draw[black, fill=black] (2,4) circle (0.2);
						\draw[black, fill=black] (3,2) circle (0.2);
						\draw[black, fill=black] (4,5) circle (0.2);
						\draw[black, fill=black] (5,1) circle (0.2);
						\draw[thick](0,0)--(0,6);
						\draw[thick](0,0)--(6,0);
						\draw[thick](1,0)--(1,6);
						\draw[thick](0,1)--(6,1);
						\draw[thick](2,0)--(2,6);
						\draw[thick](0,2)--(6,2);
						\draw[thick](3,0)--(3,6);
						\draw[thick](0,3)--(6,3);
						\draw[thick](4,0)--(4,6);
						\draw[thick](0,4)--(6,4);
						\draw[thick](5,0)--(5,6);
						\draw[thick](0,5)--(6,5);
						\draw[thick](6,0)--(6,6);
						\draw[thick](0,6)--(6,6);
					\end{tikzpicture}
				\end{center}
				\caption{}
				\label{figure:simples-case1-4}
		        \end{subfigure}
		          \begin{subfigure}[b]{0.19\textwidth}
				\begin{center}
					\begin{tikzpicture}[scale=.4]
						\filldraw[light-gray](0,1) rectangle (1,2);
						\filldraw[light-gray](0,3) rectangle (1,4);
						\filldraw[light-gray](1,1) rectangle (2,2);
						\filldraw[light-gray](1,3) rectangle (2,4);
						\filldraw[light-gray](2,0) rectangle (3,1);
						\filldraw[light-gray](2,1) rectangle (3,2);
						\filldraw[light-gray](2,2) rectangle (3,3);
						\filldraw[light-gray](2,3) rectangle (3,4);
						\filldraw[light-gray](3,0) rectangle (4,1);
						\filldraw[light-gray](3,1) rectangle (4,2);
						\filldraw[light-gray](3,2) rectangle (4,3);
						\filldraw[light-gray](3,3) rectangle (4,4);
						\draw[black, fill=black] (1,2) circle (0.2);
						\draw[black, fill=black] (2,3) circle (0.2);
						\draw[black, fill=black] (3,1) circle (0.2);
						\draw[thick](0,0)--(0,4);
						\draw[thick](0,0)--(4,0);
						\draw[thick](1,0)--(1,4);
						\draw[thick](0,1)--(4,1);
						\draw[thick](2,0)--(2,4);
						\draw[thick](0,2)--(4,2);
						\draw[thick](3,0)--(3,4);
						\draw[thick](0,3)--(4,3);
						\draw[thick](4,0)--(4,4);
						\draw[thick](0,4)--(4,4);
						\draw[ultra thick] (1.2,2.2)--(1.8,2.8);
						\node at (.5,2.5) {$B$};
						\node at (1.5,.5) {$D$};
						\node at (.5,.5) {$C$};
					\end{tikzpicture}
				\end{center}
				\caption{}
				\label{figure:simples-case1-5}
		        \end{subfigure}
		          \begin{subfigure}[b]{0.19\textwidth}
		                \begin{center}
					\begin{tikzpicture}[scale=.4]
						\filldraw[light-gray](0,1) rectangle (1,2);
						\filldraw[light-gray](0,3) rectangle (1,4);
						\filldraw[light-gray](1,1) rectangle (2,2);
						\filldraw[light-gray](1,3) rectangle (2,4);
						\filldraw[light-gray](2,0) rectangle (3,1);
						\filldraw[light-gray](2,1) rectangle (3,2);
						\filldraw[light-gray](2,2) rectangle (3,3);
						\filldraw[light-gray](2,3) rectangle (3,4);
						\filldraw[light-gray](3,0) rectangle (4,1);
						\filldraw[light-gray](3,1) rectangle (4,2);
						\filldraw[light-gray](3,2) rectangle (4,3);
						\filldraw[light-gray](3,3) rectangle (4,4);
						\filldraw[light-gray](0,2) rectangle (1,3);
						\draw[black, fill=black] (1,2) circle (0.2);
						\draw[black, fill=black] (2,3) circle (0.2);
						\draw[black, fill=black] (3,1) circle (0.2);
						\draw[thick](0,0)--(0,4);
						\draw[thick](0,0)--(4,0);
						\draw[thick](1,0)--(1,4);
						\draw[thick](0,1)--(4,1);
						\draw[thick](2,0)--(2,4);
						\draw[thick](0,2)--(4,2);
						\draw[thick](3,0)--(3,4);
						\draw[thick](0,3)--(4,3);
						\draw[thick](4,0)--(4,4);
						\draw[thick](0,4)--(4,4);
						\draw[ultra thick] (1.2,2.2)--(1.8,2.8);
						\draw[ultra thick] (1.2,0.8)--(1.8,0.2);
						\draw[ultra thick] (0.2,0.2)--(0.8,0.8);
					\end{tikzpicture}
				\end{center}
				\caption{}
				\label{figure:simples-case1-6}
		        \end{subfigure}

		\vspace*{.25cm}		        

		        \begin{subfigure}[b]{0.19\textwidth}
		              \begin{center}
					\begin{tikzpicture}[scale=.4]
						\filldraw[light-gray](0,3) rectangle (1,4);
						\filldraw[light-gray](1,0) rectangle (2,1);
						\filldraw[light-gray](1,1) rectangle (2,2);
						\filldraw[light-gray](1,2) rectangle (2,3);
						\filldraw[light-gray](1,3) rectangle (2,4);
						\filldraw[dark-gray](2,0) rectangle (3,1);
						\filldraw[light-gray](2,2) rectangle (3,3);
						\filldraw[light-gray](2,3) rectangle (3,4);
						\filldraw[light-gray](3,0) rectangle (4,1);
						\filldraw[dark-gray](3,1) rectangle (4,2);
						\filldraw[light-gray](3,2) rectangle (4,3);
						\filldraw[light-gray](3,3) rectangle (4,4);
						\draw[black, fill=black] (1,3) circle (0.2);
						\draw[black, fill=black] (2,2) circle (0.2);
						\draw[black, fill=black] (3,1) circle (0.2);
						\draw[thick](0,0)--(0,4);
						\draw[thick](0,0)--(4,0);
						\draw[thick](1,0)--(1,4);
						\draw[thick](0,1)--(4,1);
						\draw[thick](2,0)--(2,4);
						\draw[thick](0,2)--(4,2);
						\draw[thick](3,0)--(3,4);
						\draw[thick](0,3)--(4,3);
						\draw[thick](4,0)--(4,4);
						\draw[thick](0,4)--(4,4);
						\draw[ultra thick] (2.2,1.8)--(2.8,1.2);
					\end{tikzpicture}
				\end{center}
				\caption{}
				\label{figure:simples-new1}	
		        \end{subfigure}%
		        \begin{subfigure}[b]{0.19\textwidth}
				\begin{center}
					\begin{tikzpicture}[scale=.4]
						\filldraw[light-gray](0,1) rectangle (1,2);
						\filldraw[light-gray](0,2) rectangle (1,3);
						\filldraw[light-gray](0,4) rectangle (1,5);
						\filldraw[light-gray](1,4) rectangle (2,5);
						\filldraw[light-gray](2,0) rectangle (3,1);
						\filldraw[light-gray](2,1) rectangle (3,2);
						\filldraw[light-gray](2,2) rectangle (3,3);
						\filldraw[light-gray](2,3) rectangle (3,4);
						\filldraw[light-gray](2,4) rectangle (3,5);
						\filldraw[dark-gray](3,0) rectangle (4,1);
						\filldraw[light-gray](3,3) rectangle (4,4);
						\filldraw[light-gray](3,4) rectangle (4,5);
						\filldraw[light-gray](4,0) rectangle (5,1);
						\filldraw[dark-gray](4,1) rectangle (5,2);
						\filldraw[dark-gray](4,2) rectangle (5,3);
						\filldraw[light-gray](4,3) rectangle (5,4);
						\filldraw[light-gray](4,4) rectangle (5,5);
						\draw[black, fill=black] (1,2) circle (0.2);
						\draw[black, fill=black] (2,4) circle (0.2);
						\draw[black, fill=black] (3,3) circle (0.2);
						\draw[black, fill=black] (4,1) circle (0.2);
						\draw[thick](0,0)--(0,5);
						\draw[thick](0,0)--(5,0);
						\draw[thick](1,0)--(1,5);
						\draw[thick](0,1)--(5,1);
						\draw[thick](2,0)--(2,5);
						\draw[thick](0,2)--(5,2);
						\draw[thick](3,0)--(3,5);
						\draw[thick](0,3)--(5,3);
						\draw[thick](4,0)--(4,5);
						\draw[thick](0,4)--(5,4);
						\draw[thick](5,0)--(5,5);
						\draw[thick](0,5)--(5,5);
						\draw[ultra thick] (3.2,2.8)--(3.8,1.2);
						\node at (.5,3.5) {$A$};
						\node at (1.5,.5) {$C$};
						\node at (1.5,1.5) {$B$};
						\node at (.5,.5) {$D$};
						\node at (1.5,3.5) {$E$};
					\end{tikzpicture}
				\end{center}	
				\caption{}
				\label{figure:simples-new2}
		        \end{subfigure}
		        \begin{subfigure}[b]{0.19\textwidth}
				\begin{center}
					\begin{tikzpicture}[scale=.4]
						\filldraw[light-gray](0,1) rectangle (1,2);
						\filldraw[light-gray](0,2) rectangle (1,3);
						\filldraw[light-gray](0,4) rectangle (1,5);
						\filldraw[light-gray](1,4) rectangle (2,5);
						\filldraw[light-gray](2,0) rectangle (3,1);
						\filldraw[light-gray](2,1) rectangle (3,2);
						\filldraw[light-gray](2,2) rectangle (3,3);
						\filldraw[light-gray](2,3) rectangle (3,4);
						\filldraw[light-gray](2,4) rectangle (3,5);
						\filldraw[dark-gray](3,0) rectangle (4,1);
						\filldraw[light-gray](3,3) rectangle (4,4);
						\filldraw[light-gray](3,4) rectangle (4,5);
						\filldraw[light-gray](4,0) rectangle (5,1);
						\filldraw[dark-gray](4,1) rectangle (5,2);
						\filldraw[dark-gray](4,2) rectangle (5,3);
						\filldraw[light-gray](4,3) rectangle (5,4);
						\filldraw[light-gray](4,4) rectangle (5,5);
						\filldraw[light-gray](0,3) rectangle (1,4);
						\draw[black, fill=black] (1,2) circle (0.2);
						\draw[black, fill=black] (2,4) circle (0.2);
						\draw[black, fill=black] (3,3) circle (0.2);
						\draw[black, fill=black] (4,1) circle (0.2);
						\draw[thick](0,0)--(0,5);
						\draw[thick](0,0)--(5,0);
						\draw[thick](1,0)--(1,5);
						\draw[thick](0,1)--(5,1);
						\draw[thick](2,0)--(2,5);
						\draw[thick](0,2)--(5,2);
						\draw[thick](3,0)--(3,5);
						\draw[thick](0,3)--(5,3);
						\draw[thick](4,0)--(4,5);
						\draw[thick](0,4)--(5,4);
						\draw[thick](5,0)--(5,5);
						\draw[thick](0,5)--(5,5);
						\draw[ultra thick] (3.2,2.8)--(3.8,1.2);
						\draw[ultra thick] (1.2,1.8)--(1.8,0.2);
						\draw[ultra thick] (0.2,0.2)--(0.8,0.8);
						\draw[ultra thick] (1.2,3.2)--(1.8,3.8);
					\end{tikzpicture}
				\end{center}	
				\caption{}
				\label{figure:simples-new3}
		        \end{subfigure}
		          \begin{subfigure}[b]{0.19\textwidth}
				\begin{center}
					\begin{tikzpicture}[scale=.4]
						\filldraw[light-gray](0,1) rectangle (1,2);
						\filldraw[light-gray](0,2) rectangle (1,3);
						\filldraw[light-gray](0,3) rectangle (1,4);
						\filldraw[dark-gray](0,4) rectangle (1,5);
						\filldraw[light-gray](0,5) rectangle (1,6);
						\filldraw[light-gray](1,3) rectangle (2,4);
						\filldraw[light-gray](1,4) rectangle (2,5);
						\filldraw[light-gray](1,5) rectangle (2,6);
						\filldraw[light-gray](2,3) rectangle (3,4);
						\filldraw[light-gray](2,5) rectangle (3,6);
						\filldraw[light-gray](3,0) rectangle (4,1);
						\filldraw[light-gray](3,1) rectangle (4,2);
						\filldraw[light-gray](3,2) rectangle (4,3);
						\filldraw[light-gray](3,3) rectangle (4,4);
						\filldraw[light-gray](3,4) rectangle (4,5);
						\filldraw[light-gray](3,5) rectangle (4,6);
						\filldraw[dark-gray](4,0) rectangle (5,1);
						\filldraw[light-gray](4,3) rectangle (5,4);
						\filldraw[light-gray](4,4) rectangle (5,5);
						\filldraw[light-gray](4,5) rectangle (5,6);
						\filldraw[light-gray](5,0) rectangle (6,1);
						\filldraw[dark-gray](5,1) rectangle (6,2);
						\filldraw[dark-gray](5,2) rectangle (6,3);
						\filldraw[light-gray](5,3) rectangle (6,4);
						\filldraw[light-gray](5,4) rectangle (6,5);
						\filldraw[light-gray](5,5) rectangle (6,6);
						\draw[black, fill=black] (1,2) circle (0.2);
						\draw[black, fill=black] (2,4) circle (0.2);
						\draw[black, fill=black] (3,5) circle (0.2);
						\draw[black, fill=black] (4,3) circle (0.2);
						\draw[black, fill=black] (5,1) circle (0.2);
						\draw[thick](0,0)--(0,6);
						\draw[thick](0,0)--(6,0);
						\draw[thick](1,0)--(1,6);
						\draw[thick](0,1)--(6,1);
						\draw[thick](2,0)--(2,6);
						\draw[thick](0,2)--(6,2);
						\draw[thick](3,0)--(3,6);
						\draw[thick](0,3)--(6,3);
						\draw[thick](4,0)--(4,6);
						\draw[thick](0,4)--(6,4);
						\draw[thick](5,0)--(5,6);
						\draw[thick](0,5)--(6,5);
						\draw[thick](6,0)--(6,6);
						\draw[thick](0,6)--(6,6);
						\node at (1.5,0.5) {$F$};
						\node at (1.5,1.5) {$G$};
						\node at (2.5,1.5) {$H$};
						\node at (2.5,0.5) {$I$};
						\node at (1.5,2.5) {$J$};
						\node at (2.5,2.5) {$K$};
						\draw[ultra thick] (4.2,2.8)--(4.8,1.2);
						\draw[ultra thick] (0.2,0.2)--(0.8,0.8);
						\draw[ultra thick] (2.2,4.2)--(2.8,4.8);
					\end{tikzpicture}
				\end{center}	
				\caption{}
				\label{figure:simples-new4}
		        \end{subfigure}
		          \begin{subfigure}[b]{0.19\textwidth}
				\begin{center}
\begin{tikzpicture}[scale=.4]
\filldraw[light-gray](0,1) rectangle (1,2);
\filldraw[light-gray](0,2) rectangle (1,3);
\filldraw[light-gray](0,3) rectangle (1,4);
\filldraw[light-gray](0,4) rectangle (1,5);
\filldraw[dark-gray](0,5) rectangle (1,6);
\filldraw[dark-gray](0,6) rectangle (1,7);
\filldraw[dark-gray](1,0) rectangle (2,1);
\filldraw[dark-gray](1,1) rectangle (2,2);
\filldraw[light-gray](1,4) rectangle (2,5);
\filldraw[dark-gray](1,5) rectangle (2,6);
\filldraw[dark-gray](1,6) rectangle (2,7);
\filldraw[dark-gray](2,0) rectangle (3,1);
\filldraw[dark-gray](2,1) rectangle (3,2);
\filldraw[light-gray](2,3) rectangle (3,4);
\filldraw[dark-gray](2,4) rectangle (3,5);
\filldraw[light-gray](2,6) rectangle (3,7);
\filldraw[dark-gray](3,0) rectangle (4,1);
\filldraw[dark-gray](3,2) rectangle (4,3);
\filldraw[dark-gray](3,3) rectangle (4,4);
\filldraw[dark-gray](3,4) rectangle (4,5);
\filldraw[light-gray](3,6) rectangle (4,7);
\filldraw[dark-gray](4,0) rectangle (5,1);
\filldraw[light-gray](4,1) rectangle (5,2);
\filldraw[dark-gray](4,2) rectangle (5,3);
\filldraw[light-gray](4,3) rectangle (5,4);
\filldraw[light-gray](4,4) rectangle (5,5);
\filldraw[light-gray](4,5) rectangle (5,6);
\filldraw[light-gray](4,6) rectangle (5,7);
\filldraw[dark-gray](5,0) rectangle (6,1);
\filldraw[light-gray](5,4) rectangle (6,5);
\filldraw[dark-gray](5,5) rectangle (6,6);
\filldraw[dark-gray](5,6) rectangle (6,7);
\filldraw[light-gray](6,0) rectangle (7,1);
\filldraw[dark-gray](6,1) rectangle (7,2);
\filldraw[dark-gray](6,2) rectangle (7,3);
\filldraw[dark-gray](6,3) rectangle (7,4);
\filldraw[light-gray](6,4) rectangle (7,5);
\filldraw[dark-gray](6,5) rectangle (7,6);
\filldraw[dark-gray](6,6) rectangle (7,7);
\draw[black, fill=black] (1,3) circle (0.2);
\draw[black, fill=black] (2,5) circle (0.2);
\draw[black, fill=black] (3,2) circle (0.2);
\draw[black, fill=black] (4,6) circle (0.2);
\draw[black, fill=black] (5,4) circle (0.2);
\draw[black, fill=black] (6,1) circle (0.2);
\draw[thick](0,0)--(0,7);
\draw[thick](0,0)--(7,0);
\draw[thick](1,0)--(1,7);
\draw[thick](0,1)--(7,1);
\draw[thick](2,0)--(2,7);
\draw[thick](0,2)--(7,2);
\draw[thick](3,0)--(3,7);
\draw[thick](0,3)--(7,3);
\draw[thick](4,0)--(4,7);
\draw[thick](0,4)--(7,4);
\draw[thick](5,0)--(5,7);
\draw[thick](0,5)--(7,5);
\draw[thick](6,0)--(6,7);
\draw[thick](0,6)--(7,6);
\draw[thick](7,0)--(7,7);
\draw[thick](0,7)--(7,7);

\draw[ultra thick] (2.2, 5.2) -- (3.8, 5.8);
\draw[ultra thick] (5.2, 3.8) -- (5.8, 1.2);
\draw[ultra thick] (.2, .2) -- (.8, .8);
\end{tikzpicture}

				\end{center}	
				\caption{}
				\label{figure:simples-new5}
		        \end{subfigure}
		        \caption{Permutation diagrams corresponding to steps in the proof of Theorem \ref{theorem:simples}.}
		\end{figure}
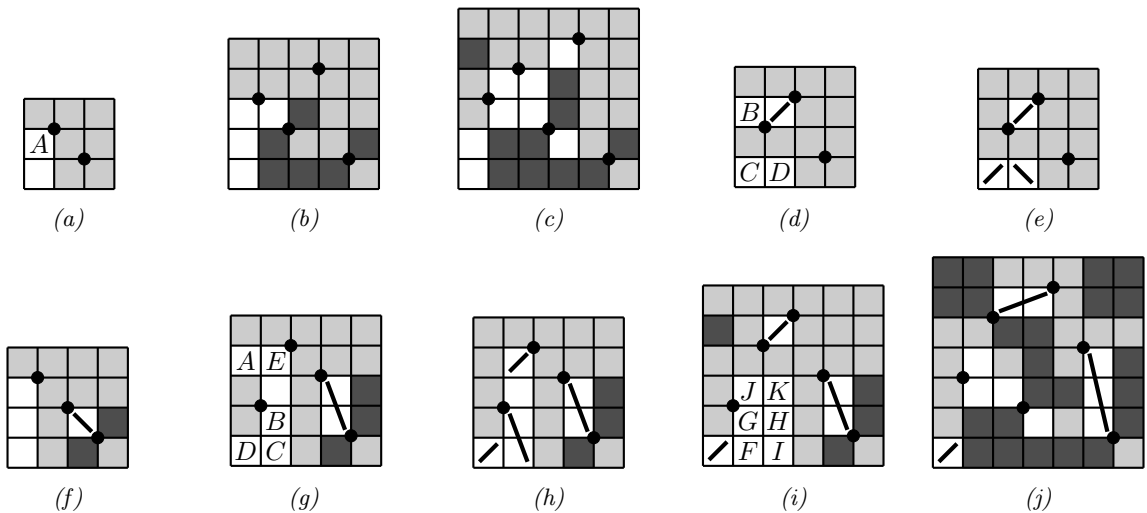

		\emph{Case 1b:} $\pi_R$ has more than one entry, but is strictly decreasing\\
		In this case, we have the diagram in Figure~\ref{figure:simples-new1}. The rectangular hull of $\pi_R$ must be split with an entry to the left. Choose the leftmost possible entry to get the diagram in Figure~\ref{figure:simples-new2}. If cell $A$ contains an entry, then the interval contained in the rectangular hull of all entries in cell $A$ together with the leftmost three entries shown in Figure~\ref{figure:simples-new2} cannot be split. Hence, cell $A$ is empty. Cells $B$ and $C$ together must be decreasing to avoid a $3124$ pattern. To avoid an unsplittable interval contained entirely with cell $D$, cell $D$ must be increasing. For the same reason, cell $E$ must also be increasing. Thus, we have the situation in Figure~\ref{figure:simples-new3}.
		
		Now, the rectangular hull of $n$ and the entry to its immediate right must be split to the left. The result is Figure~\ref{figure:simples-new4}. To avoid a $3124$ pattern, cells $F$, $G$, $H$, and $I$ (which came from cells $B$ and $C$) must together form one decreasing block. If cell $K$ is nonempty, then there is an unsplittable interval that contains all entries in cell $K$, the entry $n$, the entry just to the right of $n$, the monotone increasing cell just below and to the left of $n$, and possibly (if cell $F$ is empty) the leftmost entry shown in Figure~\ref{figure:simples-new4} along with any entries in cells $G$, $H$, and $J$. Hence, cell $K$ is empty. Suppose that cell $H$ has an entry. Then, $\pi$ is represented by Figure~\ref{figure:simples-new5}, and it is clear that any permutation drawn on this diagram must have the form $1 \oplus \tau$ or $\tau \ominus 1$ (for some $\tau$) and thus cannot be simple. If cell $G$ has an entry, then an almost identical situation occurs. Therefore, $\pi$ has the form shown in Figure~\ref{figure:simples-new55}. 
		
		If $J$ contains a descent, then (as can be seen in Figure~\ref{figure:simples-new56}), the monotone increasing interval containing $n$ cannot be split. Thus $J$ must be increasing, as shown in Figure~\ref{figure:simples-new57}, and is now clear that $\pi \in \Simples(\GG_1)$.

			
		\emph{Case 1c:} $\pi_R$ is not strictly decreasing\\
		First we rule out a form of $\pi$ that cannot occur. We know that $\pi_R$ has the shape of a wedge permutation as shown in Figure~\ref{figure:class-parts}. We now show that if $\pi_R$ is not strictly decreasing then its rightmost entry, $\pi(n)$, lies on the top part of the wedge permutation, not the bottom part. Assume otherwise. Then, we have the picture in Figure~\ref{figure:wedge2}, where the three rightmost entries shown are (from left to right) the leftmost entry in the wedge, the topmost entry in the wedge, and the rightmost (and bottommost) entry in the wedge.
				
		If cell $L$ of Figure~\ref{figure:wedge2} were empty, then we would have $\pi(n)=1$, which contradicts the simplicity of $\pi$. As such, cell $L$ must be nonempty, yielding Figure~\ref{figure:wedge25}.  The entry in $\pi$ following the leftmost entry of $\pi_R$ (the second leftmost entry of the wedge) is either above or below the leftmost entry of $\pi_R$ (note that we are not talking about the second leftmost entry of $\pi_R$ shown on the diagram, but the second leftmost entry of $\pi_R$ in any permutation drawn on the diagram, i.e., this entry may, and probably will, lie in one of the white cells). We describe the first case in detail; the second case follows by an almost identical argument. Figure~\ref{figure:wedge26} shows $\pi$ under the assumption that the second leftmost entry in the wedge lies above the leftmost entry in the wedge. The two leftmost entries of $\pi_R$ must be separated by an entry in the white cell to their left; suppose this splitting entry is as leftmost as possible, as shown in Figure~\ref{figure:wedge27}. We can now show that the uppermost five entries shown in Figure~\ref{figure:wedge27} are part of an interval that cannot be split. There are two open cells in which a splitting entry could lie: cells $A$ and $B$. Obviously, if both cells $A$ and $B$ are empty, then the interval in question cannot be split. In fact, $B$ must contain at least one entry in order to split the monotone increasing interval above it. Suppose the entry in $B$ is drawn as bottommost as possible, as in Figure~\ref{figure:wedge28}. If cell $C$ has an entry, then the rectangular hull of its leftmost entry together with all entries shown in Figure~\ref{figure:wedge28} except the leftmost and rightmost is an interval. Since cell $C$ is thus empty, cell $A$ must contain an entry. Placing an entry in cell $A$ has a similar effect as the entry that we previously placed in cell $C$, and thus we reach a contradiction to the assumption that the rightmost entry in $\pi_R$ lies on the bottom leg of the wedge.
		
	\begin{figure}
		        \centering
		        \begin{subfigure}[b]{0.24\textwidth}
				\begin{center}
					\begin{tikzpicture}[scale=.4]
						\filldraw[light-gray](0,1) rectangle (1,2);
						\filldraw[light-gray](0,2) rectangle (1,3);
						\filldraw[light-gray](0,3) rectangle (1,4);
						\filldraw[dark-gray](0,4) rectangle (1,5);
						\filldraw[light-gray](0,5) rectangle (1,6);
						\filldraw[light-gray](1,3) rectangle (2,4);
						\filldraw[light-gray](1,4) rectangle (2,5);
						\filldraw[light-gray](1,5) rectangle (2,6);
						\filldraw[light-gray](2,3) rectangle (3,4);
						\filldraw[light-gray](2,5) rectangle (3,6);
						\filldraw[light-gray](3,0) rectangle (4,1);
						\filldraw[light-gray](3,1) rectangle (4,2);
						\filldraw[light-gray](3,2) rectangle (4,3);
						\filldraw[light-gray](3,3) rectangle (4,4);
						\filldraw[light-gray](3,4) rectangle (4,5);
						\filldraw[light-gray](3,5) rectangle (4,6);
						\filldraw[dark-gray](4,0) rectangle (5,1);
						\filldraw[light-gray](4,3) rectangle (5,4);
						\filldraw[light-gray](4,4) rectangle (5,5);
						\filldraw[light-gray](4,5) rectangle (5,6);
						\filldraw[light-gray](5,0) rectangle (6,1);
						\filldraw[dark-gray](5,1) rectangle (6,2);
						\filldraw[dark-gray](5,2) rectangle (6,3);
						\filldraw[light-gray](5,3) rectangle (6,4);
						\filldraw[light-gray](5,4) rectangle (6,5);
						\filldraw[light-gray](5,5) rectangle (6,6);
						\filldraw[light-gray](2,2) rectangle (3,3);
						\filldraw[light-gray](2,1) rectangle (3,2);
						\filldraw[light-gray](1,1) rectangle (2,2);
						\draw[black, fill=black] (1,2) circle (0.2);
						\draw[black, fill=black] (2,4) circle (0.2);
						\draw[black, fill=black] (3,5) circle (0.2);
						\draw[black, fill=black] (4,3) circle (0.2);
						\draw[black, fill=black] (5,1) circle (0.2);
						\draw[thick](0,0)--(0,6);
						\draw[thick](0,0)--(6,0);
						\draw[thick](1,0)--(1,6);
						\draw[thick](0,1)--(6,1);
						\draw[thick](2,0)--(2,6);
						\draw[thick](0,2)--(6,2);
						\draw[thick](3,0)--(3,6);
						\draw[thick](0,3)--(6,3);
						\draw[thick](4,0)--(4,6);
						\draw[thick](0,4)--(6,4);
						\draw[thick](5,0)--(5,6);
						\draw[thick](0,5)--(6,5);
						\draw[thick](6,0)--(6,6);
						\draw[thick](0,6)--(6,6);
						\node at (1.5,2.5) {$J$};
						\draw[ultra thick] (4.2,2.8)--(4.8,1.2);
						\draw[ultra thick] (0.2,0.2)--(0.8,0.8);
						\draw[ultra thick] (2.2,4.2)--(2.8,4.8);
						\draw[ultra thick] (1.2,0.8)--(2.8, 0.2);
					\end{tikzpicture}
				\end{center}	
				\caption{}
				\label{figure:simples-new55}
		        \end{subfigure}
		         \begin{subfigure}[b]{0.24\textwidth}
				\begin{center}
\begin{tikzpicture}[scale=.4]
\filldraw[light-gray](0,1) rectangle (1,2);
\filldraw[light-gray](0,2) rectangle (1,3);
\filldraw[dark-gray](0,3) rectangle (1,4);
\filldraw[dark-gray](0,4) rectangle (1,5);
\filldraw[dark-gray](0,5) rectangle (1,6);
\filldraw[dark-gray](0,6) rectangle (1,7);
\filldraw[light-gray](0,7) rectangle (1,8);
\filldraw[light-gray](1,1) rectangle (2,2);
\filldraw[light-gray](1,5) rectangle (2,6);
\filldraw[dark-gray](1,6) rectangle (2,7);
\filldraw[light-gray](1,7) rectangle (2,8);
\filldraw[dark-gray](2,0) rectangle (3,1);
\filldraw[dark-gray](2,1) rectangle (3,2);
\filldraw[dark-gray](2,2) rectangle (3,3);
\filldraw[light-gray](2,5) rectangle (3,6);
\filldraw[dark-gray](2,6) rectangle (3,7);
\filldraw[light-gray](2,7) rectangle (3,8);
\filldraw[dark-gray](3,0) rectangle (4,1);
\filldraw[light-gray](3,1) rectangle (4,2);
\filldraw[dark-gray](3,3) rectangle (4,4);
\filldraw[light-gray](3,5) rectangle (4,6);
\filldraw[light-gray](3,6) rectangle (4,7);
\filldraw[light-gray](3,7) rectangle (4,8);
\filldraw[dark-gray](4,0) rectangle (5,1);
\filldraw[light-gray](4,1) rectangle (5,2);
\filldraw[light-gray](4,2) rectangle (5,3);
\filldraw[dark-gray](4,3) rectangle (5,4);
\filldraw[light-gray](4,4) rectangle (5,5);
\filldraw[light-gray](4,5) rectangle (5,6);
\filldraw[light-gray](4,7) rectangle (5,8);
\filldraw[dark-gray](5,0) rectangle (6,1);
\filldraw[light-gray](5,1) rectangle (6,2);
\filldraw[light-gray](5,2) rectangle (6,3);
\filldraw[dark-gray](5,3) rectangle (6,4);
\filldraw[light-gray](5,4) rectangle (6,5);
\filldraw[light-gray](5,5) rectangle (6,6);
\filldraw[light-gray](5,6) rectangle (6,7);
\filldraw[light-gray](5,7) rectangle (6,8);
\filldraw[dark-gray](6,0) rectangle (7,1);
\filldraw[light-gray](6,5) rectangle (7,6);
\filldraw[light-gray](6,6) rectangle (7,7);
\filldraw[light-gray](6,7) rectangle (7,8);
\filldraw[light-gray](7,0) rectangle (8,1);
\filldraw[dark-gray](7,1) rectangle (8,2);
\filldraw[dark-gray](7,2) rectangle (8,3);
\filldraw[dark-gray](7,3) rectangle (8,4);
\filldraw[dark-gray](7,4) rectangle (8,5);
\filldraw[light-gray](7,5) rectangle (8,6);
\filldraw[light-gray](7,6) rectangle (8,7);
\filldraw[light-gray](7,7) rectangle (8,8);
\draw[black, fill=black] (1,2) circle (0.2);
\draw[black, fill=black] (2,4) circle (0.2);
\draw[black, fill=black] (3,3) circle (0.2);
\draw[black, fill=black] (4,6) circle (0.2);
\draw[black, fill=black] (5,7) circle (0.2);
\draw[black, fill=black] (6,5) circle (0.2);
\draw[black, fill=black] (7,1) circle (0.2);
\draw[thick](0,0)--(0,8);
\draw[thick](0,0)--(8,0);
\draw[thick](1,0)--(1,8);
\draw[thick](0,1)--(8,1);
\draw[thick](2,0)--(2,8);
\draw[thick](0,2)--(8,2);
\draw[thick](3,0)--(3,8);
\draw[thick](0,3)--(8,3);
\draw[thick](4,0)--(4,8);
\draw[thick](0,4)--(8,4);
\draw[thick](5,0)--(5,8);
\draw[thick](0,5)--(8,5);
\draw[thick](6,0)--(6,8);
\draw[thick](0,6)--(8,6);
\draw[thick](7,0)--(7,8);
\draw[thick](0,7)--(8,7);
\draw[thick](8,0)--(8,8);
\draw[thick](0,8)--(8,8);
						\draw[ultra thick] (6.2,4.8)--(6.8,1.2);
						\draw[ultra thick] (0.2,0.2)--(0.8,0.8);
						\draw[ultra thick] (4.2,6.2)--(4.8,6.8);
						\draw[ultra thick] (1.2,0.8)--(1.8, 0.2);
\end{tikzpicture}
				\end{center}	
				\caption{}
				\label{figure:simples-new56}
		        \end{subfigure}
		         \begin{subfigure}[b]{0.24\textwidth}
				\begin{center}
					\begin{tikzpicture}[scale=.4]
						\filldraw[light-gray](0,1) rectangle (1,2);
						\filldraw[light-gray](0,2) rectangle (1,3);
						\filldraw[light-gray](0,3) rectangle (1,4);
						\filldraw[dark-gray](0,4) rectangle (1,5);
						\filldraw[light-gray](0,5) rectangle (1,6);
						\filldraw[light-gray](1,3) rectangle (2,4);
						\filldraw[light-gray](1,4) rectangle (2,5);
						\filldraw[light-gray](1,5) rectangle (2,6);
						\filldraw[light-gray](2,3) rectangle (3,4);
						\filldraw[light-gray](2,5) rectangle (3,6);
						\filldraw[light-gray](3,0) rectangle (4,1);
						\filldraw[light-gray](3,1) rectangle (4,2);
						\filldraw[light-gray](3,2) rectangle (4,3);
						\filldraw[light-gray](3,3) rectangle (4,4);
						\filldraw[light-gray](3,4) rectangle (4,5);
						\filldraw[light-gray](3,5) rectangle (4,6);
						\filldraw[dark-gray](4,0) rectangle (5,1);
						\filldraw[light-gray](4,3) rectangle (5,4);
						\filldraw[light-gray](4,4) rectangle (5,5);
						\filldraw[light-gray](4,5) rectangle (5,6);
						\filldraw[light-gray](5,0) rectangle (6,1);
						\filldraw[dark-gray](5,1) rectangle (6,2);
						\filldraw[dark-gray](5,2) rectangle (6,3);
						\filldraw[light-gray](5,3) rectangle (6,4);
						\filldraw[light-gray](5,4) rectangle (6,5);
						\filldraw[light-gray](5,5) rectangle (6,6);
						\filldraw[light-gray](2,2) rectangle (3,3);
						\filldraw[light-gray](2,1) rectangle (3,2);
						\filldraw[light-gray](1,1) rectangle (2,2);
						\draw[black, fill=black] (1,2) circle (0.2);
						\draw[black, fill=black] (2,4) circle (0.2);
						\draw[black, fill=black] (3,5) circle (0.2);
						\draw[black, fill=black] (4,3) circle (0.2);
						\draw[black, fill=black] (5,1) circle (0.2);
						\draw[thick](0,0)--(0,6);
						\draw[thick](0,0)--(6,0);
						\draw[thick](1,0)--(1,6);
						\draw[thick](0,1)--(6,1);
						\draw[thick](2,0)--(2,6);
						\draw[thick](0,2)--(6,2);
						\draw[thick](3,0)--(3,6);
						\draw[thick](0,3)--(6,3);
						\draw[thick](4,0)--(4,6);
						\draw[thick](0,4)--(6,4);
						\draw[thick](5,0)--(5,6);
						\draw[thick](0,5)--(6,5);
						\draw[thick](6,0)--(6,6);
						\draw[thick](0,6)--(6,6);
						\draw[ultra thick] (1.2,2.2)--(1.8,2.8);
						\draw[ultra thick] (4.2,2.8)--(4.8,1.2);
						\draw[ultra thick] (0.2,0.2)--(0.8,0.8);
						\draw[ultra thick] (2.2,4.2)--(2.8,4.8);
						\draw[ultra thick] (1.2,0.8)--(2.8, 0.2);
					\end{tikzpicture}
				\end{center}
				\caption{}
				\label{figure:simples-new57}
		        \end{subfigure}
		        \begin{subfigure}[b]{0.24\textwidth}
				\begin{center}
					\begin{tikzpicture}[scale=.4]
						\filldraw[dark-gray](0,4) rectangle (1,5);
						\filldraw[light-gray](1,0) rectangle (2,1);
						\filldraw[light-gray](1,1) rectangle (2,2);
						\filldraw[light-gray](1,2) rectangle (2,3);
						\filldraw[dark-gray](1,3) rectangle (2,4);
						\filldraw[light-gray](1,4) rectangle (2,5);
						\filldraw[dark-gray](2,0) rectangle (3,1);
						\filldraw[light-gray](2,3) rectangle (3,4);
						\filldraw[light-gray](2,4) rectangle (3,5);
						\filldraw[dark-gray](3,0) rectangle (4,1);
						\filldraw[light-gray](3,3) rectangle (4,4);
						\filldraw[dark-gray](3,4) rectangle (4,5);
						\filldraw[light-gray](4,0) rectangle (5,1);
						\filldraw[dark-gray](4,1) rectangle (5,2);
						\filldraw[dark-gray](4,2) rectangle (5,3);
						\filldraw[light-gray](4,3) rectangle (5,4);
						\filldraw[dark-gray](4,4) rectangle (5,5);
						\filldraw[light-gray](3,2) rectangle (4,3);
						\draw[black, fill=black] (1,4) circle (0.2);
						\draw[black, fill=black] (2,2) circle (0.2);
						\draw[black, fill=black] (3,3) circle (0.2);
						\draw[black, fill=black] (4,1) circle (0.2);
						\draw[thick](0,0)--(0,5);
						\draw[thick](0,0)--(5,0);
						\draw[thick](1,0)--(1,5);
						\draw[thick](0,1)--(5,1);
						\draw[thick](2,0)--(2,5);
						\draw[thick](0,2)--(5,2);
						\draw[thick](3,0)--(3,5);
						\draw[thick](0,3)--(5,3);
						\draw[thick](4,0)--(4,5);
						\draw[thick](0,4)--(5,4);
						\draw[thick](5,0)--(5,5);
						\draw[thick](0,5)--(5,5);
						\draw[ultra thick] (2.2,1.8)--(3.8,1.2);
						\draw[ultra thick] (2.2, 2.2)--(2.8, 2.8);
						\node at (.5,.5) {$L$};
					\end{tikzpicture}
				\end{center}
				\caption{}
				\label{figure:wedge2}

		        \end{subfigure}
		        
		\vspace*{.25cm}
		
		\begin{subfigure}[b]{0.24\textwidth}
				\begin{center}
\begin{tikzpicture}[scale=.4]
\filldraw[light-gray](0,0) rectangle (1,1);
\filldraw[dark-gray](0,3) rectangle (1,4);
\filldraw[dark-gray](0,5) rectangle (1,6);
\filldraw[light-gray](1,0) rectangle (2,1);
\filldraw[dark-gray](1,5) rectangle (2,6);
\filldraw[light-gray](2,0) rectangle (3,1);
\filldraw[light-gray](2,1) rectangle (3,2);
\filldraw[light-gray](2,2) rectangle (3,3);
\filldraw[light-gray](2,3) rectangle (3,4);
\filldraw[dark-gray](2,4) rectangle (3,5);
\filldraw[light-gray](2,5) rectangle (3,6);
\filldraw[dark-gray](3,0) rectangle (4,1);
\filldraw[dark-gray](3,1) rectangle (4,2);
\filldraw[light-gray](3,4) rectangle (4,5);
\filldraw[light-gray](3,5) rectangle (4,6);
\filldraw[dark-gray](4,0) rectangle (5,1);
\filldraw[dark-gray](4,1) rectangle (5,2);
\filldraw[light-gray](4,3) rectangle (5,4);
\filldraw[light-gray](4,4) rectangle (5,5);
\filldraw[dark-gray](4,5) rectangle (5,6);
\filldraw[light-gray](5,0) rectangle (6,1);
\filldraw[light-gray](5,1) rectangle (6,2);
\filldraw[dark-gray](5,2) rectangle (6,3);
\filldraw[dark-gray](5,3) rectangle (6,4);
\filldraw[light-gray](5,4) rectangle (6,5);
\filldraw[dark-gray](5,5) rectangle (6,6);
\draw[black, fill=black] (1,1) circle (0.2);
\draw[black, fill=black] (2,5) circle (0.2);
\draw[black, fill=black] (3,3) circle (0.2);
\draw[black, fill=black] (4,4) circle (0.2);
\draw[black, fill=black] (5,2) circle (0.2);
\draw[thick](0,0)--(0,6);
\draw[thick](0,0)--(6,0);
\draw[thick](1,0)--(1,6);
\draw[thick](0,1)--(6,1);
\draw[thick](2,0)--(2,6);
\draw[thick](0,2)--(6,2);
\draw[thick](3,0)--(3,6);
\draw[thick](0,3)--(6,3);
\draw[thick](4,0)--(4,6);
\draw[thick](0,4)--(6,4);
\draw[thick](5,0)--(5,6);
\draw[thick](0,5)--(6,5);
\draw[thick](6,0)--(6,6);
\draw[thick](0,6)--(6,6);
\draw[ultra thick] (3.2,2.8)--(4.8,2.2);
\draw[ultra thick] (3.2, 3.2)--(3.8, 3.8);
\end{tikzpicture}
				\end{center}
				\caption{}
				\label{figure:wedge25}
		        \end{subfigure}
		        \begin{subfigure}[b]{0.24\textwidth}
				\begin{center}
\begin{tikzpicture}[scale=.4]
\filldraw[light-gray](0,0) rectangle (1,1);
\draw[thick](0,5)--(0,5);
\filldraw[dark-gray](0,3) rectangle (1,4);
\filldraw[dark-gray](0,4) rectangle (1,5);
\draw[thick](0,2)--(0,2);
\filldraw[dark-gray](0,6) rectangle (1,7);
\filldraw[light-gray](1,0) rectangle (2,1);
\filldraw[dark-gray](1,4) rectangle (2,5);
\draw[thick](1,2)--(1,2);
\filldraw[dark-gray](1,6) rectangle (2,7);
\filldraw[light-gray](2,0) rectangle (3,1);
\filldraw[light-gray](2,1) rectangle (3,2);
\filldraw[light-gray](2,2) rectangle (3,3);
\filldraw[light-gray](2,3) rectangle (3,4);
\filldraw[dark-gray](2,4) rectangle (3,5);
\filldraw[dark-gray](2,5) rectangle (3,6);
\filldraw[light-gray](2,6) rectangle (3,7);
\filldraw[dark-gray](3,0) rectangle (4,1);
\filldraw[dark-gray](3,1) rectangle (4,2);
\filldraw[light-gray](3,2) rectangle (4,3);
\filldraw[light-gray](3,3) rectangle (4,4);
\filldraw[light-gray](3,4) rectangle (4,5);
\filldraw[dark-gray](3,5) rectangle (4,6);
\filldraw[light-gray](3,6) rectangle (4,7);
\filldraw[dark-gray](4,0) rectangle (5,1);
\filldraw[dark-gray](4,1) rectangle (5,2);
\draw[thick](4,5)--(4,5);
\filldraw[light-gray](4,3) rectangle (5,4);
\filldraw[light-gray](4,5) rectangle (5,6);
\filldraw[dark-gray](4,6) rectangle (5,7);
\filldraw[dark-gray](5,0) rectangle (6,1);
\filldraw[dark-gray](5,1) rectangle (6,2);
\draw[thick](5,5)--(5,5);
\filldraw[light-gray](5,3) rectangle (6,4);
\filldraw[light-gray](5,4) rectangle (6,5);
\filldraw[light-gray](5,5) rectangle (6,6);
\filldraw[dark-gray](5,6) rectangle (6,7);
\filldraw[light-gray](6,0) rectangle (7,1);
\filldraw[light-gray](6,1) rectangle (7,2);
\filldraw[dark-gray](6,2) rectangle (7,3);
\filldraw[dark-gray](6,3) rectangle (7,4);
\filldraw[dark-gray](6,4) rectangle (7,5);
\filldraw[light-gray](6,5) rectangle (7,6);
\filldraw[dark-gray](6,6) rectangle (7,7);
\draw[black, fill=black] (1,1) circle (0.2);
\draw[black, fill=black] (2,6) circle (0.2);
\draw[black, fill=black] (3,3) circle (0.2);
\draw[black, fill=black] (4,4) circle (0.2);
\draw[black, fill=black] (5,5) circle (0.2);
\draw[black, fill=black] (6,2) circle (0.2);
\draw[thick](0,0)--(0,7);
\draw[thick](0,0)--(7,0);
\draw[thick](1,0)--(1,7);
\draw[thick](0,1)--(7,1);
\draw[thick](2,0)--(2,7);
\draw[thick](0,2)--(7,2);
\draw[thick](3,0)--(3,7);
\draw[thick](0,3)--(7,3);
\draw[thick](4,0)--(4,7);
\draw[thick](0,4)--(7,4);
\draw[thick](5,0)--(5,7);
\draw[thick](0,5)--(7,5);
\draw[thick](6,0)--(6,7);
\draw[thick](0,6)--(7,6);
\draw[thick](7,0)--(7,7);
\draw[thick](0,7)--(7,7);
\draw[ultra thick] (4.2,2.8)--(5.8,2.2);
\draw[ultra thick] (4.2, 4.2)--(4.8, 4.8);
\end{tikzpicture}
				\end{center}
				\caption{}
				\label{figure:wedge26}
		        \end{subfigure}
		        \begin{subfigure}[b]{0.24\textwidth}
				\begin{center}
		\begin{tikzpicture}[scale=.4]
\filldraw[light-gray](0,0) rectangle (1,1);
\draw[thick](0,6)--(0,6);
\filldraw[dark-gray](0,3) rectangle (1,4);
\filldraw[dark-gray](0,4) rectangle (1,5);
\filldraw[dark-gray](0,5) rectangle (1,6);
\filldraw[dark-gray](0,6) rectangle (1,7);
\filldraw[dark-gray](0,7) rectangle (1,8);
\filldraw[light-gray](1,0) rectangle (2,1);
\filldraw[light-gray](1,3) rectangle (2,4);
\filldraw[light-gray](1,4) rectangle (2,5);
\filldraw[dark-gray](1,5) rectangle (2,6);
\draw[thick](1,2)--(1,2);
\filldraw[dark-gray](1,7) rectangle (2,8);
\filldraw[dark-gray](2,0) rectangle (3,1);
\filldraw[dark-gray](2,1) rectangle (3,2);
\filldraw[dark-gray](2,2) rectangle (3,3);
\draw[thick](2,5)--(2,5);
\filldraw[dark-gray](2,5) rectangle (3,6);
\draw[thick](2,2)--(2,2);
\filldraw[dark-gray](2,7) rectangle (3,8);
\filldraw[dark-gray](3,0) rectangle (4,1);
\filldraw[dark-gray](3,1) rectangle (4,2);
\filldraw[dark-gray](3,2) rectangle (4,3);
\filldraw[light-gray](3,3) rectangle (4,4);
\filldraw[light-gray](3,4) rectangle (4,5);
\filldraw[dark-gray](3,5) rectangle (4,6);
\filldraw[dark-gray](3,6) rectangle (4,7);
\filldraw[light-gray](3,7) rectangle (4,8);
\filldraw[dark-gray](4,0) rectangle (5,1);
\filldraw[dark-gray](4,1) rectangle (5,2);
\filldraw[light-gray](4,2) rectangle (5,3);
\filldraw[dark-gray](4,3) rectangle (5,4);
\filldraw[light-gray](4,4) rectangle (5,5);
\filldraw[light-gray](4,5) rectangle (5,6);
\filldraw[dark-gray](4,6) rectangle (5,7);
\filldraw[light-gray](4,7) rectangle (5,8);
\filldraw[dark-gray](5,0) rectangle (6,1);
\filldraw[dark-gray](5,1) rectangle (6,2);
\draw[thick](5,6)--(5,6);
\filldraw[dark-gray](5,3) rectangle (6,4);
\filldraw[light-gray](5,4) rectangle (6,5);
\filldraw[light-gray](5,6) rectangle (6,7);
\filldraw[dark-gray](5,7) rectangle (6,8);
\filldraw[dark-gray](6,0) rectangle (7,1);
\filldraw[dark-gray](6,1) rectangle (7,2);
\draw[thick](6,6)--(6,6);
\filldraw[light-gray](6,3) rectangle (7,4);
\filldraw[light-gray](6,4) rectangle (7,5);
\filldraw[light-gray](6,5) rectangle (7,6);
\filldraw[light-gray](6,6) rectangle (7,7);
\filldraw[dark-gray](6,7) rectangle (7,8);
\filldraw[light-gray](7,0) rectangle (8,1);
\filldraw[light-gray](7,1) rectangle (8,2);
\filldraw[dark-gray](7,2) rectangle (8,3);
\filldraw[dark-gray](7,3) rectangle (8,4);
\filldraw[dark-gray](7,4) rectangle (8,5);
\filldraw[dark-gray](7,5) rectangle (8,6);
\filldraw[light-gray](7,6) rectangle (8,7);
\filldraw[dark-gray](7,7) rectangle (8,8);
\draw[black, fill=black] (1,1) circle (0.2);
\draw[black, fill=black] (2,4) circle (0.2);
\draw[black, fill=black] (3,7) circle (0.2);
\draw[black, fill=black] (4,3) circle (0.2);
\draw[black, fill=black] (5,5) circle (0.2);
\draw[black, fill=black] (6,6) circle (0.2);
\draw[black, fill=black] (7,2) circle (0.2);
\draw[thick](0,0)--(0,8);
\draw[thick](0,0)--(8,0);
\draw[thick](1,0)--(1,8);
\draw[thick](0,1)--(8,1);
\draw[thick](2,0)--(2,8);
\draw[thick](0,2)--(8,2);
\draw[thick](3,0)--(3,8);
\draw[thick](0,3)--(8,3);
\draw[thick](4,0)--(4,8);
\draw[thick](0,4)--(8,4);
\draw[thick](5,0)--(5,8);
\draw[thick](0,5)--(8,5);
\draw[thick](6,0)--(6,8);
\draw[thick](0,6)--(8,6);
\draw[thick](7,0)--(7,8);
\draw[thick](0,7)--(8,7);
\draw[thick](8,0)--(8,8);
\draw[thick](0,8)--(8,8);
\draw[ultra thick] (5.2,2.8)--(6.8,2.2);
\draw[ultra thick] (5.2, 5.2)--(5.8, 5.8);
\node at (1.5, 6.5) {$A$};
\node at (5.3, 2.35) {\footnotesize $B$};
\end{tikzpicture}
				\end{center}
				\caption{}
				\label{figure:wedge27}
		        \end{subfigure}%
		        \begin{subfigure}[b]{0.24\textwidth}
				\begin{center}
	\begin{tikzpicture}[scale=.4]
\filldraw[light-gray](0,0) rectangle (1,1);
\draw[thick](0,7)--(0,7);
\filldraw[dark-gray](0,3) rectangle (1,4);
\filldraw[dark-gray](0,4) rectangle (1,5);
\filldraw[dark-gray](0,5) rectangle (1,6);
\filldraw[dark-gray](0,6) rectangle (1,7);
\filldraw[dark-gray](0,7) rectangle (1,8);
\filldraw[dark-gray](0,8) rectangle (1,9);
\filldraw[light-gray](1,0) rectangle (2,1);
\filldraw[light-gray](1,4) rectangle (2,5);
\filldraw[light-gray](1,5) rectangle (2,6);
\filldraw[dark-gray](1,6) rectangle (2,7);
\draw[thick](1,2)--(1,2);
\filldraw[dark-gray](1,8) rectangle (2,9);
\filldraw[dark-gray](2,0) rectangle (3,1);
\filldraw[dark-gray](2,1) rectangle (3,2);
\filldraw[dark-gray](2,2) rectangle (3,3);
\filldraw[dark-gray](2,3) rectangle (3,4);
\draw[thick](2,5)--(2,5);
\filldraw[dark-gray](2,6) rectangle (3,7);
\draw[thick](2,2)--(2,2);
\filldraw[dark-gray](2,8) rectangle (3,9);
\filldraw[dark-gray](3,0) rectangle (4,1);
\filldraw[dark-gray](3,1) rectangle (4,2);
\filldraw[dark-gray](3,2) rectangle (4,3);
\filldraw[dark-gray](3,3) rectangle (4,4);
\filldraw[light-gray](3,4) rectangle (4,5);
\filldraw[light-gray](3,5) rectangle (4,6);
\filldraw[dark-gray](3,6) rectangle (4,7);
\filldraw[dark-gray](3,7) rectangle (4,8);
\filldraw[light-gray](3,8) rectangle (4,9);
\filldraw[dark-gray](4,0) rectangle (5,1);
\filldraw[dark-gray](4,1) rectangle (5,2);
\filldraw[dark-gray](4,2) rectangle (5,3);
\filldraw[light-gray](4,3) rectangle (5,4);
\filldraw[dark-gray](4,4) rectangle (5,5);
\filldraw[light-gray](4,5) rectangle (5,6);
\filldraw[light-gray](4,6) rectangle (5,7);
\filldraw[dark-gray](4,7) rectangle (5,8);
\filldraw[light-gray](4,8) rectangle (5,9);
\filldraw[dark-gray](5,0) rectangle (6,1);
\filldraw[dark-gray](5,1) rectangle (6,2);
\filldraw[dark-gray](5,2) rectangle (6,3);
\draw[thick](5,6)--(5,6);
\filldraw[dark-gray](5,4) rectangle (6,5);
\filldraw[light-gray](5,5) rectangle (6,6);
\filldraw[dark-gray](5,7) rectangle (6,8);
\filldraw[dark-gray](5,8) rectangle (6,9);
\filldraw[dark-gray](6,0) rectangle (7,1);
\filldraw[dark-gray](6,1) rectangle (7,2);
\filldraw[light-gray](6,2) rectangle (7,3);
\filldraw[dark-gray](6,3) rectangle (7,4);
\filldraw[dark-gray](6,4) rectangle (7,5);
\filldraw[dark-gray](6,5) rectangle (7,6);
\filldraw[light-gray](6,7) rectangle (7,8);
\filldraw[dark-gray](6,8) rectangle (7,9);
\filldraw[dark-gray](7,0) rectangle (8,1);
\filldraw[dark-gray](7,1) rectangle (8,2);
\draw[thick](7,7)--(7,7);
\filldraw[dark-gray](7,3) rectangle (8,4);
\filldraw[dark-gray](7,4) rectangle (8,5);
\filldraw[dark-gray](7,5) rectangle (8,6);
\filldraw[light-gray](7,6) rectangle (8,7);
\filldraw[light-gray](7,7) rectangle (8,8);
\filldraw[dark-gray](7,8) rectangle (8,9);
\filldraw[light-gray](8,0) rectangle (9,1);
\filldraw[light-gray](8,1) rectangle (9,2);
\filldraw[dark-gray](8,2) rectangle (9,3);
\filldraw[dark-gray](8,3) rectangle (9,4);
\filldraw[dark-gray](8,4) rectangle (9,5);
\filldraw[dark-gray](8,5) rectangle (9,6);
\filldraw[dark-gray](8,6) rectangle (9,7);
\filldraw[light-gray](8,7) rectangle (9,8);
\filldraw[dark-gray](8,8) rectangle (9,9);
\draw[black, fill=black] (1,1) circle (0.2);
\draw[black, fill=black] (2,5) circle (0.2);
\draw[black, fill=black] (3,8) circle (0.2);
\draw[black, fill=black] (4,4) circle (0.2);
\draw[black, fill=black] (5,6) circle (0.2);
\draw[black, fill=black] (6,3) circle (0.2);
\draw[black, fill=black] (7,7) circle (0.2);
\draw[black, fill=black] (8,2) circle (0.2);
\draw[thick](0,0)--(0,9);
\draw[thick](0,0)--(9,0);
\draw[thick](1,0)--(1,9);
\draw[thick](0,1)--(9,1);
\draw[thick](2,0)--(2,9);
\draw[thick](0,2)--(9,2);
\draw[thick](3,0)--(3,9);
\draw[thick](0,3)--(9,3);
\draw[thick](4,0)--(4,9);
\draw[thick](0,4)--(9,4);
\draw[thick](5,0)--(5,9);
\draw[thick](0,5)--(9,5);
\draw[thick](6,0)--(6,9);
\draw[thick](0,6)--(9,6);
\draw[thick](7,0)--(7,9);
\draw[thick](0,7)--(9,7);
\draw[thick](8,0)--(8,9);
\draw[thick](0,8)--(9,8);
\draw[thick](9,0)--(9,9);
\draw[thick](0,9)--(9,9);
\draw[ultra thick] (5.2,3.8)--(5.8,3.2);
\draw[ultra thick] (7.2,2.8)--(7.8,2.2);
\draw[ultra thick] (5.2, 6.2)--(6.8, 6.8);
\node at (1.5, 7.5) {$A$};
\node at (1.5, 3.5) {$C$};
\end{tikzpicture}
				\end{center}
				\caption{}
				\label{figure:wedge28}
		        \end{subfigure}
%

		\vspace*{.25cm}		        
			 \begin{subfigure}[b]{0.24\textwidth}
				\begin{center}
					\begin{tikzpicture}[scale=.4]
						\filldraw[dark-gray](0,3) rectangle (1,4);
						\filldraw[dark-gray](1,2) rectangle (2,3);
						\filldraw[light-gray](1,3) rectangle (2,4);
						\filldraw[light-gray](2,2) rectangle (3,3);
						\filldraw[light-gray](2,3) rectangle (3,4);
						\filldraw[light-gray](3,0) rectangle (4,1);
						\filldraw[light-gray](3,1) rectangle (4,2);
						\filldraw[light-gray](3,2) rectangle (4,3);
						\filldraw[light-gray](1,0) rectangle (2,1);
						\filldraw[light-gray](2,0) rectangle (3,1);
						\filldraw[dark-gray](3,3) rectangle (4,4);
						\draw[black, fill=black] (1,3) circle (0.2);
						\draw[black, fill=black] (2,1) circle (0.2);
						\draw[black, fill=black] (3,2) circle (0.2);
						\draw[thick](0,0)--(0,4);
						\draw[thick](0,0)--(4,0);
						\draw[thick](1,0)--(1,4);
						\draw[thick](0,1)--(4,1);
						\draw[thick](2,0)--(2,4);
						\draw[thick](0,2)--(4,2);
						\draw[thick](3,0)--(3,4);
						\draw[thick](0,3)--(4,3);
						\draw[thick](4,0)--(4,4);
						\draw[thick](0,4)--(4,4);
						\node at (.5,2.5) {$E$};
						\node at (.5,1.5) {$F$};
						\node at (1.5,1.5) {$G$};
					\end{tikzpicture}
				\end{center}
				\caption{}
				\label{figure:simples-case1-7}
		        \end{subfigure}
		        \begin{subfigure}[b]{0.24\textwidth}
				\begin{center}
					\begin{tikzpicture}[scale=.4]
						\filldraw[light-gray](0,1) rectangle (1,2);
						\filldraw[light-gray](0,2) rectangle (1,3);
						\filldraw[dark-gray](0,4) rectangle (1,5);
						\filldraw[dark-gray](1,0) rectangle (2,1);
						\filldraw[dark-gray](1,4) rectangle (2,5);
						\filldraw[dark-gray](2,0) rectangle (3,1);
						\filldraw[dark-gray](2,3) rectangle (3,4);
						\filldraw[light-gray](2,4) rectangle (3,5);
						\filldraw[light-gray](3,0) rectangle (4,1);
						\filldraw[dark-gray](3,1) rectangle (4,2);
						\filldraw[light-gray](3,3) rectangle (4,4);
						\filldraw[light-gray](3,4) rectangle (4,5);
						\filldraw[light-gray](4,0) rectangle (5,1);
						\filldraw[light-gray](4,1) rectangle (5,2);
						\filldraw[light-gray](4,2) rectangle (5,3);
						\filldraw[light-gray](4,3) rectangle (5,4);
						\filldraw[dark-gray](4,4) rectangle (5,5);
						\draw[black, fill=black] (1,2) circle (0.2);
						\draw[black, fill=black] (2,4) circle (0.2);
						\draw[black, fill=black] (3,1) circle (0.2);
						\draw[black, fill=black] (4,3) circle (0.2);
						\draw[thick](0,0)--(0,5);
						\draw[thick](0,0)--(5,0);
						\draw[thick](1,0)--(1,5);
						\draw[thick](0,1)--(5,1);
						\draw[thick](2,0)--(2,5);
						\draw[thick](0,2)--(5,2);
						\draw[thick](3,0)--(3,5);
						\draw[thick](0,3)--(5,3);
						\draw[thick](4,0)--(4,5);
						\draw[thick](0,4)--(5,4);
						\draw[thick](5,0)--(5,5);
						\draw[thick](0,5)--(5,5);
						\draw[ultra thick] (0.2,3.2)--(1.8,3.8);
						\node at (.5,.5) {$H$};
						\node at (1.5,1.5) {$I$};
						\node at (2.5,1.5) {$J$};
						\node at (3.5,2.5) {$K$};
					\end{tikzpicture}
				\end{center}
				\caption{}
				\label{figure:simples-case1-8}
		        \end{subfigure}
		        \begin{subfigure}[b]{0.24\textwidth}
				\begin{center}
					\begin{tikzpicture}[scale=.4]
						\filldraw[light-gray](0,1) rectangle (1,2);
						\filldraw[light-gray](0,2) rectangle (1,3);
						\filldraw[dark-gray](0,4) rectangle (1,5);
						\filldraw[dark-gray](1,0) rectangle (2,1);
						\filldraw[dark-gray](1,4) rectangle (2,5);
						\filldraw[dark-gray](2,0) rectangle (3,1);
						\filldraw[dark-gray](2,3) rectangle (3,4);
						\filldraw[light-gray](2,4) rectangle (3,5);
						\filldraw[light-gray](3,0) rectangle (4,1);
						\filldraw[dark-gray](3,1) rectangle (4,2);
						\filldraw[light-gray](3,3) rectangle (4,4);
						\filldraw[light-gray](3,4) rectangle (4,5);
						\filldraw[light-gray](4,0) rectangle (5,1);
						\filldraw[light-gray](4,1) rectangle (5,2);
						\filldraw[light-gray](4,2) rectangle (5,3);
						\filldraw[light-gray](4,3) rectangle (5,4);
						\filldraw[light-gray](0,0) rectangle (1,1);
						\filldraw[dark-gray](4,4) rectangle (5,5);
						\draw[black, fill=black] (1,2) circle (0.2);
						\draw[black, fill=black] (2,4) circle (0.2);
						\draw[black, fill=black] (3,1) circle (0.2);
						\draw[black, fill=black] (4,3) circle (0.2);
						\draw[thick](0,0)--(0,5);
						\draw[thick](0,0)--(5,0);
						\draw[thick](1,0)--(1,5);
						\draw[thick](0,1)--(5,1);
						\draw[thick](2,0)--(2,5);
						\draw[thick](0,2)--(5,2);
						\draw[thick](3,0)--(3,5);
						\draw[thick](0,3)--(5,3);
						\draw[thick](4,0)--(4,5);
						\draw[thick](0,4)--(5,4);
						\draw[thick](5,0)--(5,5);
						\draw[thick](0,5)--(5,5);
						\node at (.3, 3.7) {$_L$};
						\draw[ultra thick] (0.2,3.2)--(1.8,3.8);
						\draw[ultra thick] (1.2,1.8)--(2.8,1.2);
						\draw[ultra thick] (3.2,2.2)--(3.8,2.8);
					\end{tikzpicture}
				\end{center}	
				\caption{}
				\label{figure:simples-case1-9}
		        \end{subfigure}
		          \begin{subfigure}[b]{0.24\textwidth}
				\begin{center}
				\begin{tikzpicture}[scale=.4]
					\filldraw[light-gray](0,0) rectangle (1,1);
					\filldraw[light-gray](0,1) rectangle (1,2);
					\filldraw[light-gray](0,2) rectangle (1,3);
					\filldraw[light-gray](0,3) rectangle (1,4);
					\filldraw[dark-gray](0,4) rectangle (1,5);
					\filldraw[dark-gray](0,5) rectangle (1,6);
					\filldraw[dark-gray](1,0) rectangle (2,1);
					\filldraw[dark-gray](1,1) rectangle (2,2);
					\filldraw[light-gray](1,2) rectangle (2,3);
					\filldraw[dark-gray](1,3) rectangle (2,4);
					\filldraw[dark-gray](1,5) rectangle (2,6);
					\filldraw[dark-gray](2,0) rectangle (3,1);
					\filldraw[dark-gray](2,2) rectangle (3,3);
					\filldraw[dark-gray](2,3) rectangle (3,4);
					\filldraw[dark-gray](2,5) rectangle (3,6);
					\filldraw[dark-gray](3,0) rectangle (4,1);
					\filldraw[dark-gray](3,3) rectangle (4,4);
					\filldraw[dark-gray](3,4) rectangle (4,5);
					\filldraw[light-gray](3,5) rectangle (4,6);
					\filldraw[light-gray](4,0) rectangle (5,1);
					\filldraw[dark-gray](4,1) rectangle (5,2);
					\filldraw[light-gray](4,3) rectangle (5,4);
					\filldraw[light-gray](4,4) rectangle (5,5);
					\filldraw[light-gray](4,5) rectangle (5,6);
					\filldraw[light-gray](5,0) rectangle (6,1);
					\filldraw[dark-gray](5,1) rectangle (6,2);
					\filldraw[light-gray](5,2) rectangle (6,3);
					\filldraw[light-gray](5,3) rectangle (6,4);
					\filldraw[dark-gray](5,4) rectangle (6,5);
					\filldraw[dark-gray](5,5) rectangle (6,6);
					\draw[black, fill=black] (1,4) circle (0.2);
					\draw[black, fill=black] (2,2) circle (0.2);
					\draw[black, fill=black] (3,5) circle (0.2);
					\draw[black, fill=black] (4,1) circle (0.2);
					\draw[black, fill=black] (5,3) circle (0.2);
					\draw[thick](0,0)--(0,6);
					\draw[thick](0,0)--(6,0);
					\draw[thick](1,0)--(1,6);
					\draw[thick](0,1)--(6,1);
					\draw[thick](2,0)--(2,6);
					\draw[thick](0,2)--(6,2);
					\draw[thick](3,0)--(3,6);
					\draw[thick](0,3)--(6,3);
					\draw[thick](4,0)--(4,6);
					\draw[thick](0,4)--(6,4);
					\draw[thick](5,0)--(5,6);
					\draw[thick](0,5)--(6,5);
					\draw[thick](6,0)--(6,6);
					\draw[thick](0,6)--(6,6);
					\draw[ultra thick] (2.2,1.8)--(3.8,1.2);
					\draw[ultra thick] (1.2,4.2)--(2.8,4.8);
					\draw[ultra thick] (3.2,2.2)--(4.8,2.8);
				\end{tikzpicture}
			\end{center}
				\caption{}
				\label{figure:simples-case1-10}
		        \end{subfigure}
		         
		        \caption{Permutation diagrams corresponding to steps in the proof of Theorem \ref{theorem:simples}.}
		\end{figure}
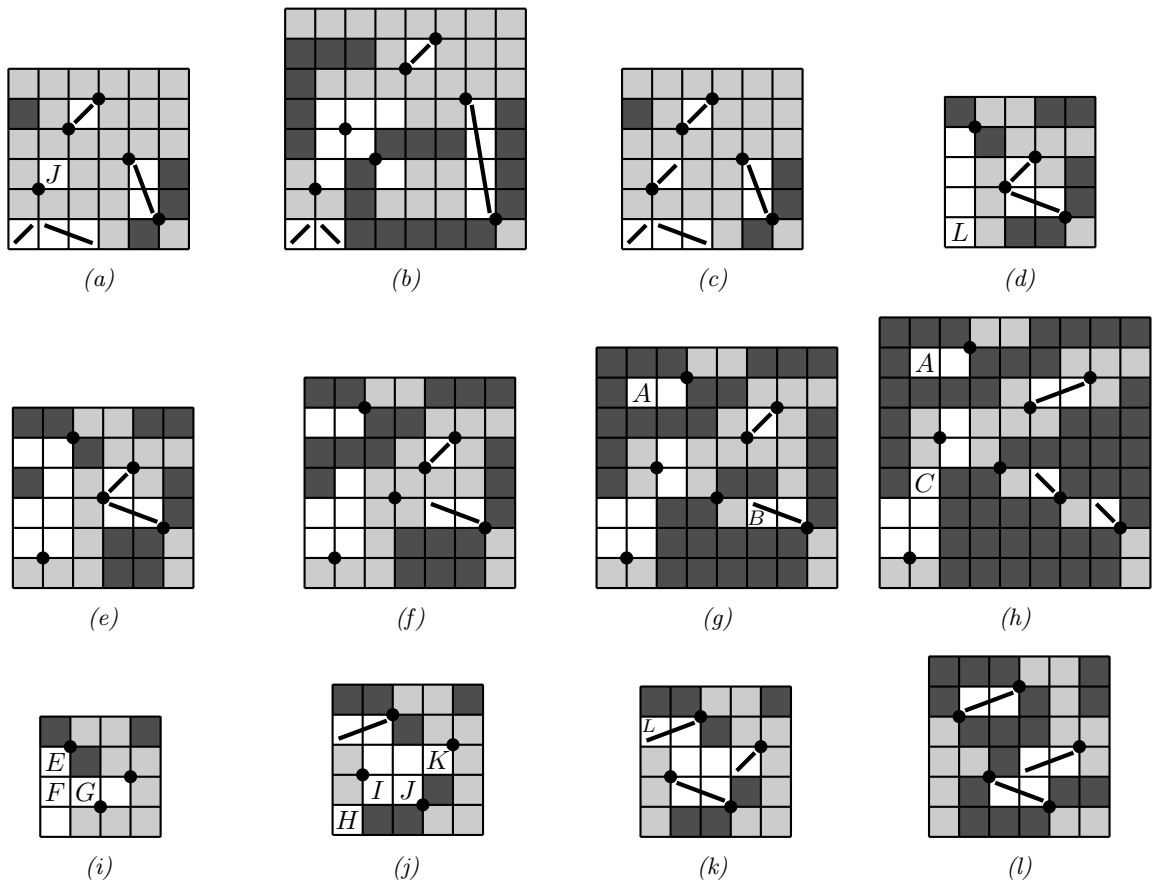

		Hence, we can now assume that $\pi(n)$ is in the upper part of the wedge. Assume that $\pi_R$ has at least two entries, and let the second entry be as low as possible. Thus, we have the diagram in Figure~\ref{figure:simples-case1-7}. If $E$ has a descent, then there is a $4312$ pattern. If $F$ is empty, then $G$ must have an entry, and it follows by the usual arguments that the rectangular hull of $G$ together with the bottommost entry of $\pi_R$ forms an interval. Hence $E$ is increasing and there must be an entry in $F$. Select the leftmost possible entry of $F$. See Figure~\ref{figure:simples-case1-8}.
		
	Any entry in $H$ has no entry above it and to its left, since there would then be a $3124$ pattern. This implies that cell $H$ is actually empty because otherwise the rectangular hull of cell $H$ is itself an interval of $\pi$. The cells $I$ and $J$ together must be decreasing to avoid a $3124$ pattern. Cell $K$ must be increasing, because of the known wedge shape of $\pi_R$. Hence, we have the diagram in Figure~\ref{figure:simples-case1-9}. Suppose that cell $L$ has some entry. Place it as far to the left as possible. This gives us the diagram in Figure~\ref{figure:simples-case1-10}. Any permutation drawn on this figure must lie in $\GG_2$.

Now suppose instead that cell $L$ in Figure~\ref{figure:simples-case1-9} is actually empty. This yields the diagram in Figure~\ref{figure:simples-case1-11}. If cell $M$ is empty, we get the diagram in Figure~\ref{figure:simples-case1-12}. Any permutation drawn on Figure~\ref{figure:simples-case1-12} must lie in $\GG_2$. So, suppose instead that cell $M$ has an entry, and place it as high as possible. This gives us Figure~\ref{figure:simples-case1-13}. Cell $N$ must be empty to avoid an unsplittable interval contained in the rectangular hull of $\pi(1)$ and cell $N$. Additionally, any entry of $O$ must be to the left of any entry in $P$, otherwise a $3124$ pattern occurs. By the usual monotonicity arguments, we get the diagram in Figure~\ref{figure:simples-case1-14}. Any permutation drawn on this figure lies in the class $\GG_2$. This completes the first case.

		\begin{figure}
		        \centering
		        \begin{subfigure}[b]{0.24\textwidth}
				\begin{center}
					\begin{tikzpicture}[scale=.4]
						\filldraw[light-gray](0,0) rectangle (1,1);
						\filldraw[light-gray](0,1) rectangle (1,2);
						\filldraw[light-gray](0,2) rectangle (1,3);
						\filldraw[light-gray](0,3) rectangle (1,4);
						\filldraw[dark-gray](0,4) rectangle (1,5);
						\filldraw[dark-gray](1,0) rectangle (2,1);
						\filldraw[dark-gray](1,4) rectangle (2,5);
						\filldraw[dark-gray](2,0) rectangle (3,1);
						\filldraw[dark-gray](2,3) rectangle (3,4);
						\filldraw[light-gray](2,4) rectangle (3,5);
						\filldraw[light-gray](3,0) rectangle (4,1);
						\filldraw[dark-gray](3,1) rectangle (4,2);
						\filldraw[light-gray](3,3) rectangle (4,4);
						\filldraw[light-gray](3,4) rectangle (4,5);
						\filldraw[light-gray](4,0) rectangle (5,1);
						\filldraw[light-gray](4,1) rectangle (5,2);
						\filldraw[light-gray](4,2) rectangle (5,3);
						\filldraw[light-gray](4,3) rectangle (5,4);
						\filldraw[dark-gray](4,4) rectangle (5,5);
						\draw[black, fill=black] (1,2) circle (0.2);
						\draw[black, fill=black] (2,4) circle (0.2);
						\draw[black, fill=black] (3,1) circle (0.2);
						\draw[black, fill=black] (4,3) circle (0.2);
						\draw[thick](0,0)--(0,5);
						\draw[thick](0,0)--(5,0);
						\draw[thick](1,0)--(1,5);
						\draw[thick](0,1)--(5,1);
						\draw[thick](2,0)--(2,5);
						\draw[thick](0,2)--(5,2);
						\draw[thick](3,0)--(3,5);
						\draw[thick](0,3)--(5,3);
						\draw[thick](4,0)--(4,5);
						\draw[thick](0,4)--(5,4);
						\draw[thick](5,0)--(5,5);
						\draw[thick](0,5)--(5,5);
						\draw[ultra thick] (1.2,3.2)--(1.8,3.8);
						\draw[ultra thick] (1.2,1.8)--(2.8,1.2);
						\draw[ultra thick] (3.2,2.2)--(3.8,2.8);
						\node at (1.5,2.5) {$M$};
					\end{tikzpicture}
				\end{center}
				\caption{}
				\label{figure:simples-case1-11}

		        \end{subfigure}
		        \begin{subfigure}[b]{0.24\textwidth}
				\begin{center}
					\begin{tikzpicture}[scale=.4]
						\filldraw[light-gray](0,0) rectangle (1,1);
						\filldraw[light-gray](0,1) rectangle (1,2);
						\filldraw[light-gray](0,2) rectangle (1,3);
						\filldraw[light-gray](0,3) rectangle (1,4);
						\filldraw[dark-gray](0,4) rectangle (1,5);
						\filldraw[dark-gray](1,0) rectangle (2,1);
						\filldraw[dark-gray](1,4) rectangle (2,5);
						\filldraw[dark-gray](2,0) rectangle (3,1);
						\filldraw[dark-gray](2,3) rectangle (3,4);
						\filldraw[light-gray](2,4) rectangle (3,5);
						\filldraw[light-gray](3,0) rectangle (4,1);
						\filldraw[dark-gray](3,1) rectangle (4,2);
						\filldraw[light-gray](3,3) rectangle (4,4);
						\filldraw[light-gray](3,4) rectangle (4,5);
						\filldraw[light-gray](4,0) rectangle (5,1);
						\filldraw[light-gray](4,1) rectangle (5,2);
						\filldraw[light-gray](4,2) rectangle (5,3);
						\filldraw[light-gray](4,3) rectangle (5,4);
						\filldraw[light-gray](1,2) rectangle (2,3);
						\filldraw[dark-gray](4,4) rectangle (5,5);
						\draw[black, fill=black] (1,2) circle (0.2);
						\draw[black, fill=black] (2,4) circle (0.2);
						\draw[black, fill=black] (3,1) circle (0.2);
						\draw[black, fill=black] (4,3) circle (0.2);
						\draw[thick](0,0)--(0,5);
						\draw[thick](0,0)--(5,0);
						\draw[thick](1,0)--(1,5);
						\draw[thick](0,1)--(5,1);
						\draw[thick](2,0)--(2,5);
						\draw[thick](0,2)--(5,2);
						\draw[thick](3,0)--(3,5);
						\draw[thick](0,3)--(5,3);
						\draw[thick](4,0)--(4,5);
						\draw[thick](0,4)--(5,4);
						\draw[thick](5,0)--(5,5);
						\draw[thick](0,5)--(5,5);
						\draw[ultra thick] (1.2,3.2)--(1.8,3.8);
						\draw[ultra thick] (1.2,1.8)--(2.8,1.2);
						\draw[ultra thick] (2.2,2.2)--(3.8,2.8);
					\end{tikzpicture}
				\end{center}
				\caption{}
				\label{figure:simples-case1-12}
		        \end{subfigure}
		        \begin{subfigure}[b]{0.24\textwidth}
				\begin{center}
					\begin{tikzpicture}[scale=.4]
						\filldraw[light-gray](0,0) rectangle (1,1);
						\filldraw[light-gray](0,1) rectangle (1,2);
						\filldraw[light-gray](0,2) rectangle (1,3);
						\filldraw[dark-gray](0,3) rectangle (1,4);
						\filldraw[dark-gray](0,4) rectangle (1,5);
						\filldraw[dark-gray](0,5) rectangle (1,6);
						\filldraw[dark-gray](1,0) rectangle (2,1);
						\filldraw[light-gray](1,3) rectangle (2,4);
						\filldraw[dark-gray](1,5) rectangle (2,6);
						\filldraw[dark-gray](2,0) rectangle (3,1);
						\filldraw[light-gray](2,3) rectangle (3,4);
						\filldraw[dark-gray](2,5) rectangle (3,6);
						\filldraw[dark-gray](3,0) rectangle (4,1);
						\filldraw[dark-gray](3,4) rectangle (4,5);
						\filldraw[light-gray](3,5) rectangle (4,6);
						\filldraw[light-gray](4,0) rectangle (5,1);
						\filldraw[dark-gray](4,1) rectangle (5,2);
						\filldraw[dark-gray](4,2) rectangle (5,3);
						\filldraw[light-gray](4,4) rectangle (5,5);
						\filldraw[light-gray](4,5) rectangle (5,6);
						\filldraw[light-gray](5,0) rectangle (6,1);
						\filldraw[light-gray](5,1) rectangle (6,2);
						\filldraw[light-gray](5,2) rectangle (6,3);
						\filldraw[light-gray](5,3) rectangle (6,4);
						\filldraw[light-gray](5,4) rectangle (6,5);
						\filldraw[dark-gray](5,5) rectangle (6,6);
						\draw[black, fill=black] (1,2) circle (0.2);
						\draw[black, fill=black] (2,3) circle (0.2);
						\draw[black, fill=black] (3,5) circle (0.2);
						\draw[black, fill=black] (4,1) circle (0.2);
						\draw[black, fill=black] (5,4) circle (0.2);
						\draw[thick](0,0)--(0,6);
						\draw[thick](0,0)--(6,0);
						\draw[thick](1,0)--(1,6);
						\draw[thick](0,1)--(6,1);
						\draw[thick](2,0)--(2,6);
						\draw[thick](0,2)--(6,2);
						\draw[thick](3,0)--(3,6);
						\draw[thick](0,3)--(6,3);
						\draw[thick](4,0)--(4,6);
						\draw[thick](0,4)--(6,4);
						\draw[thick](5,0)--(5,6);
						\draw[thick](0,5)--(6,5);
						\draw[thick](6,0)--(6,6);
						\draw[thick](0,6)--(6,6);
						\node at (1.5,1.5) {$N$};
						\node at (1.5,2.5) {$O$};
						\node at (1.5,4.5) {$P$};
					\end{tikzpicture}
				\end{center}
				\caption{}
				\label{figure:simples-case1-13}
		        \end{subfigure}
			\begin{subfigure}[b]{0.24\textwidth}
				\begin{center}
					\begin{tikzpicture}[scale=.4]
						\filldraw[light-gray](0,0) rectangle (1,1);
						\filldraw[light-gray](0,1) rectangle (1,2);
						\filldraw[light-gray](0,2) rectangle (1,3);
						\filldraw[dark-gray](0,3) rectangle (1,4);
						\filldraw[dark-gray](0,4) rectangle (1,5);
						\filldraw[dark-gray](0,5) rectangle (1,6);
						\filldraw[dark-gray](1,0) rectangle (2,1);
						\filldraw[light-gray](1,3) rectangle (2,4);
						\filldraw[dark-gray](1,5) rectangle (2,6);
						\filldraw[dark-gray](2,0) rectangle (3,1);
						\filldraw[light-gray](2,3) rectangle (3,4);
						\filldraw[dark-gray](2,5) rectangle (3,6);
						\filldraw[dark-gray](3,0) rectangle (4,1);
						\filldraw[dark-gray](3,4) rectangle (4,5);
						\filldraw[light-gray](3,5) rectangle (4,6);
						\filldraw[light-gray](4,0) rectangle (5,1);
						\filldraw[dark-gray](4,1) rectangle (5,2);
						\filldraw[dark-gray](4,2) rectangle (5,3);
						\filldraw[light-gray](4,4) rectangle (5,5);
						\filldraw[light-gray](4,5) rectangle (5,6);
						\filldraw[light-gray](5,0) rectangle (6,1);
						\filldraw[light-gray](5,1) rectangle (6,2);
						\filldraw[light-gray](5,2) rectangle (6,3);
						\filldraw[light-gray](5,3) rectangle (6,4);
						\filldraw[light-gray](5,4) rectangle (6,5);
						\filldraw[dark-gray](5,5) rectangle (6,6);
						\filldraw[light-gray](1,1) rectangle (2,2);
						\filldraw[dark-gray](1,4) rectangle (1.5,5);
						\draw[black, fill=black] (1,2) circle (0.2);
						\draw[black, fill=black] (2,3) circle (0.2);
						\draw[black, fill=black] (3,5) circle (0.2);
						\draw[black, fill=black] (4,1) circle (0.2);
						\draw[black, fill=black] (5,4) circle (0.2);
						\draw[thick](0,0)--(0,6);
						\draw[thick](0,0)--(6,0);
						\draw[thick](1,0)--(1,6);
						\draw[thick](0,1)--(6,1);
						\draw[thick](2,0)--(2,6);
						\draw[thick](0,2)--(6,2);
						\draw[thick](3,0)--(3,6);
						\draw[thick](0,3)--(6,3);
						\draw[thick](4,0)--(4,6);
						\draw[thick](0,4)--(6,4);
						\draw[thick](5,0)--(5,6);
						\draw[thick](0,5)--(6,5);
						\draw[thick](6,0)--(6,6);
						\draw[thick](0,6)--(6,6);
						\draw[thick, dashed] (1.5,0)--(1.5,6);
						\draw[ultra thick] (1.1,2.1)--(1.4,2.9);
						\draw[ultra thick] (1.7,4.2)--(2.8,4.8);
						\draw[ultra thick] (3.2,3.2)--(4.8,3.8);
						\draw[ultra thick] (2.2,2.8)--(3.8,1.2);
					\end{tikzpicture}
				\end{center}
				\caption{}
				\label{figure:simples-case1-14}
		        \end{subfigure}
	
				\vspace*{.25cm}		        

		        \begin{subfigure}[b]{0.24\textwidth}
				\begin{center}
					\begin{tikzpicture}[scale=.4]
						\filldraw[dark-gray](0,4) rectangle (1,5);
						\filldraw[light-gray](1,0) rectangle (2,1);
						\filldraw[dark-gray](1,2) rectangle (2,3);
						\filldraw[dark-gray](1,3) rectangle (2,4);
						\filldraw[light-gray](1,4) rectangle (2,5);
						\filldraw[light-gray](2,0) rectangle (3,1);
						\filldraw[light-gray](2,3) rectangle (3,4);
						\filldraw[light-gray](2,4) rectangle (3,5);
						\filldraw[dark-gray](3,0) rectangle (4,1);
						\filldraw[dark-gray](3,1) rectangle (4,2);
						\filldraw[light-gray](3,3) rectangle (4,4);
						\filldraw[dark-gray](3,4) rectangle (4,5);
						\filldraw[light-gray](4,1) rectangle (5,2);
						\filldraw[dark-gray](4,2) rectangle (5,3);
						\filldraw[dark-gray](4,4) rectangle (5,5);
						\draw[black, fill=black] (1,4) circle (0.2);
						\draw[black, fill=black] (2,1) circle (0.2);
						\draw[black, fill=black] (3,3) circle (0.2);
						\draw[black, fill=black] (4,2) circle (0.2);
						\draw[thick](0,0)--(0,5);
						\draw[thick](0,0)--(5,0);
						\draw[thick](1,0)--(1,5);
						\draw[thick](0,1)--(5,1);
						\draw[thick](2,0)--(2,5);
						\draw[thick](0,2)--(5,2);
						\draw[thick](3,0)--(3,5);
						\draw[thick](0,3)--(5,3);
						\draw[thick](4,0)--(4,5);
						\draw[thick](0,4)--(5,4);
						\draw[thick](5,0)--(5,5);
						\draw[thick](0,5)--(5,5);
						\node at (.5,2.5) {$A$};
						\node at (2.5,2.5) {$B$};
					\end{tikzpicture}
				\end{center}
				\caption{}
				\label{figure:simples-case2-1}
		        \end{subfigure}
		        \begin{subfigure}[b]{0.24\textwidth}
				\begin{center}
					\begin{tikzpicture}[scale=.4]
						\clip (-.03,-.03) rectangle (6.03,6.03);
						\filldraw[dark-gray](0,3) rectangle (1,4);
						\filldraw[dark-gray](0,4) rectangle (1,5);
						\filldraw[dark-gray](0,5) rectangle (1,6);
						\filldraw[dark-gray](1,0) rectangle (2,1);
						\filldraw[dark-gray](1,2) rectangle (2,3);
						\filldraw[dark-gray](1,5) rectangle (2,6);
						\filldraw[dark-gray](2,0) rectangle (3,1);
						\filldraw[dark-gray](2,2) rectangle (3,3);
						\filldraw[dark-gray](2,3) rectangle (3,4);
						\filldraw[dark-gray](2,4) rectangle (3,5);
						\filldraw[light-gray](2,5) rectangle (3,6);
						\filldraw[light-gray](3,0) rectangle (4,1);
						\filldraw[dark-gray](3,1) rectangle (4,2);
						\filldraw[dark-gray](3,2) rectangle (4,3);
						\filldraw[light-gray](3,3) rectangle (4,4);
						\filldraw[light-gray](3,4) rectangle (4,5);
						\filldraw[light-gray](3,5) rectangle (4,6);
						\filldraw[dark-gray](4,0) rectangle (5,1);
						\filldraw[dark-gray](4,1) rectangle (5,2);
						\filldraw[light-gray](4,4) rectangle (5,5);
						\filldraw[dark-gray](4,5) rectangle (5,6);
						\filldraw[light-gray](5,1) rectangle (6,2);
						\filldraw[dark-gray](5,2) rectangle (6,3);
						\filldraw[dark-gray](5,3) rectangle (6,4);
						\filldraw[dark-gray](5,4) rectangle (6,5);
						\filldraw[dark-gray](5,5) rectangle (6,6);
						\filldraw[light-gray](0,2) rectangle (1,3);
						\draw[black, fill=black] (1,3) circle (0.2);
						\draw[black, fill=black] (2,5) circle (0.2);
						\draw[black, fill=black] (3,1) circle (0.2);
						\draw[black, fill=black] (4,4) circle (0.2);
						\draw[black, fill=black] (5,2) circle (0.2);
						\draw[thick](0,0)--(0,6);
						\draw[thick](0,0)--(6,0);
						\draw[thick](1,0)--(1,6);
						\draw[thick](0,1)--(6,1);
						\draw[thick](2,0)--(2,6);
						\draw[thick](0,2)--(6,2);
						\draw[thick](3,0)--(3,6);
						\draw[thick](0,3)--(6,3);
						\draw[thick](4,0)--(4,6);
						\draw[thick](0,4)--(6,4);
						\draw[thick](5,0)--(5,6);
						\draw[thick](0,5)--(6,5);
						\draw[thick](6,0)--(6,6);
						\draw[thick](0,6)--(6,6);
						\draw[ultra thick] (1.2,3.2)--(1.8,4.8);
						\draw[ultra thick] (4.2,3.8)--(4.8,2.2);
						\draw[ultra thick] (1.2,1.8)--(2.8,1.2);
						\draw[ultra thick] (5.2,0.8)--(5.8,0.2);
						\node at (5.27,0.15) {\footnotesize $^E$};
						\node at (0.5, 1.5) {$C$};
						\node at (0.5, 0.5) {$D$};
					\end{tikzpicture}
				\end{center}
				\caption{}
				\label{figure:simples-case2-2}
		        \end{subfigure}
		        \begin{subfigure}[b]{0.24\textwidth}
				\begin{center}
					\begin{tikzpicture}[scale=.4]
						\filldraw[light-gray](0,3) rectangle (1,4);
						\filldraw[dark-gray](0,4) rectangle (1,5);
						\filldraw[dark-gray](0,5) rectangle (1,6);
						\filldraw[dark-gray](0,6) rectangle (1,7);
						\filldraw[dark-gray](1,0) rectangle (2,1);
						\filldraw[dark-gray](1,1) rectangle (2,2);
						\filldraw[dark-gray](1,3) rectangle (2,4);
						\filldraw[dark-gray](1,6) rectangle (2,7);
						\filldraw[dark-gray](2,0) rectangle (3,1);
						\filldraw[dark-gray](2,1) rectangle (3,2);
						\filldraw[dark-gray](2,3) rectangle (3,4);
						\filldraw[dark-gray](2,4) rectangle (3,5);
						\filldraw[dark-gray](2,5) rectangle (3,6);
						\filldraw[light-gray](2,6) rectangle (3,7);
						\filldraw[dark-gray](3,0) rectangle (4,1);
						\filldraw[light-gray](3,1) rectangle (4,2);
						\filldraw[dark-gray](3,2) rectangle (4,3);
						\filldraw[dark-gray](3,3) rectangle (4,4);
						\filldraw[light-gray](3,4) rectangle (4,5);
						\filldraw[light-gray](3,5) rectangle (4,6);
						\filldraw[light-gray](3,6) rectangle (4,7);
						\filldraw[dark-gray](4,0) rectangle (5,1);
						\filldraw[dark-gray](4,1) rectangle (5,2);
						\filldraw[dark-gray](4,2) rectangle (5,3);
						\filldraw[light-gray](4,5) rectangle (5,6);
						\filldraw[dark-gray](4,6) rectangle (5,7);
						\filldraw[dark-gray](5,0) rectangle (6,1);
						\filldraw[light-gray](5,2) rectangle (6,3);
						\filldraw[dark-gray](5,3) rectangle (6,4);
						\filldraw[dark-gray](5,4) rectangle (6,5);
						\filldraw[dark-gray](5,5) rectangle (6,6);
						\filldraw[dark-gray](5,6) rectangle (6,7);
						\filldraw[dark-gray](6,1) rectangle (7,2);
						\filldraw[dark-gray](6,2) rectangle (7,3);
						\filldraw[dark-gray](6,3) rectangle (7,4);
						\filldraw[dark-gray](6,4) rectangle (7,5);
						\filldraw[dark-gray](6,5) rectangle (7,6);
						\filldraw[dark-gray](6,6) rectangle (7,7);
						\draw[black, fill=black] (1,4) circle (0.2);
						\draw[black, fill=black] (2,6) circle (0.2);
						\draw[black, fill=black] (3,2) circle (0.2);
						\draw[black, fill=black] (4,5) circle (0.2);
						\draw[black, fill=black] (5,3) circle (0.2);
						\draw[black, fill=black] (6,1) circle (0.2);
						\draw[thick](0,0)--(0,7);
						\draw[thick](0,0)--(7,0);
						\draw[thick](1,0)--(1,7);
						\draw[thick](0,1)--(7,1);
						\draw[thick](2,0)--(2,7);
						\draw[thick](0,2)--(7,2);
						\draw[thick](3,0)--(3,7);
						\draw[thick](0,3)--(7,3);
						\draw[thick](4,0)--(4,7);
						\draw[thick](0,4)--(7,4);
						\draw[thick](5,0)--(5,7);
						\draw[thick](0,5)--(7,5);
						\draw[thick](6,0)--(6,7);
						\draw[thick](0,6)--(7,6);
						\draw[thick](7,0)--(7,7);
						\draw[thick](0,7)--(7,7);
						\draw[ultra thick] (1.2,4.2)--(1.8,5.8);
						\draw[ultra thick] (4.2,4.8)--(4.8,3.2);
						\draw[ultra thick] (1.2,2.8)--(2.8,2.2);
						\node at (0.5, 2.5) {$F$};
					\end{tikzpicture}
				\end{center}
				\caption{}
				\label{figure:simples-case2-3}
			\end{subfigure}
			\begin{subfigure}[b]{0.24\textwidth}
				\begin{center}
					\begin{tikzpicture}[scale=.4]
						\filldraw[dark-gray](0,3) rectangle (1,4);
						\filldraw[dark-gray](0,4) rectangle (1,5);
						\filldraw[dark-gray](0,5) rectangle (1,6);
						\filldraw[dark-gray](1,0) rectangle (2,1);
						\filldraw[dark-gray](1,2) rectangle (2,3);
						\filldraw[dark-gray](1,5) rectangle (2,6);
						\filldraw[dark-gray](2,0) rectangle (3,1);
						\filldraw[dark-gray](2,2) rectangle (3,3);
						\filldraw[dark-gray](2,3) rectangle (3,4);
						\filldraw[dark-gray](2,4) rectangle (3,5);
						\filldraw[light-gray](2,5) rectangle (3,6);
						\filldraw[light-gray](3,0) rectangle (4,1);
						\filldraw[dark-gray](3,1) rectangle (4,2);
						\filldraw[dark-gray](3,2) rectangle (4,3);
						\filldraw[light-gray](3,3) rectangle (4,4);
						\filldraw[light-gray](3,4) rectangle (4,5);
						\filldraw[light-gray](3,5) rectangle (4,6);
						\filldraw[dark-gray](4,0) rectangle (5,1);
						\filldraw[dark-gray](4,1) rectangle (5,2);
						\filldraw[light-gray](4,4) rectangle (5,5);
						\filldraw[dark-gray](4,5) rectangle (5,6);
						\filldraw[light-gray](5,1) rectangle (6,2);
						\filldraw[dark-gray](5,2) rectangle (6,3);
						\filldraw[dark-gray](5,3) rectangle (6,4);
						\filldraw[dark-gray](5,4) rectangle (6,5);
						\filldraw[dark-gray](5,5) rectangle (6,6);
						\filldraw[light-gray](0,2) rectangle (1,3);
						\filldraw[light-gray](5,0) rectangle (6,1);
						\draw[black, fill=black] (1,3) circle (0.2);
						\draw[black, fill=black] (2,5) circle (0.2);
						\draw[black, fill=black] (3,1) circle (0.2);
						\draw[black, fill=black] (4,4) circle (0.2);
						\draw[black, fill=black] (5,2) circle (0.2);
						\draw[thick](0,0)--(0,6);
						\draw[thick](0,0)--(6,0);
						\draw[thick](1,0)--(1,6);
						\draw[thick](0,1)--(6,1);
						\draw[thick](2,0)--(2,6);
						\draw[thick](0,2)--(6,2);
						\draw[thick](3,0)--(3,6);
						\draw[thick](0,3)--(6,3);
						\draw[thick](4,0)--(4,6);
						\draw[thick](0,4)--(6,4);
						\draw[thick](5,0)--(5,6);
						\draw[thick](0,5)--(6,5);
						\draw[thick](6,0)--(6,6);
						\draw[thick](0,6)--(6,6);
						\draw[ultra thick] (1.2,3.2)--(1.8,4.8);
						\draw[ultra thick] (4.2,3.8)--(4.8,2.2);
						\draw[ultra thick] (1.2,1.8)--(2.8,1.2);
						\draw[ultra thick] (0.2,0.2)--(0.8,1.8);
					\end{tikzpicture}
				\end{center}
				\caption{}
				\label{figure:simples-case2-4}
		        \end{subfigure}
		         
		        \caption{Permutation diagrams corresponding to steps in the proof of Theorem \ref{theorem:simples}.}
		\end{figure}
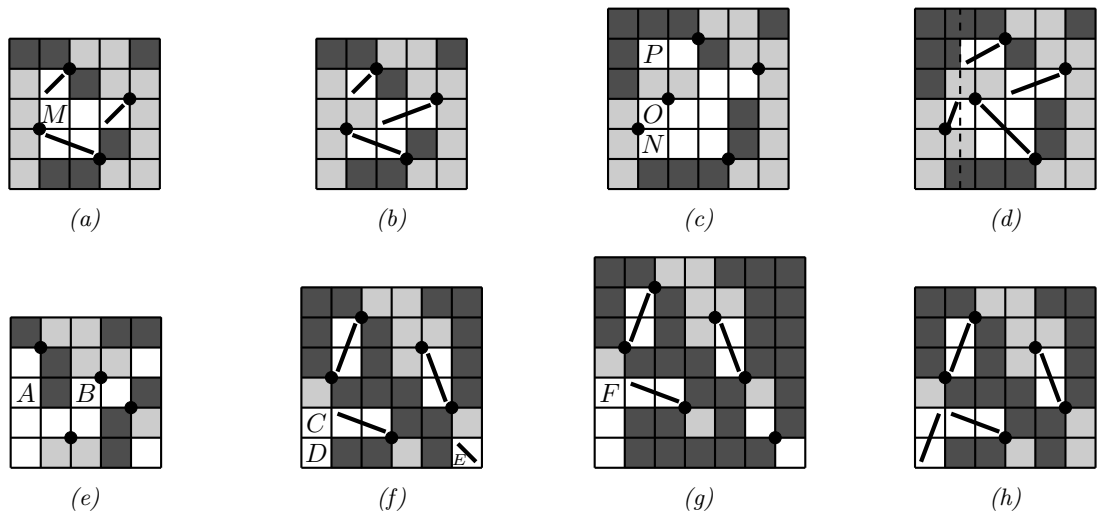	
	
		\textbf{Case 2:} Assume $\pi_R$ contains a $132$ pattern. Let the `$1$' be the bottommost possible entry, let the `$3$' be the topmost possible entry for the chosen `$1$', and let the `$2$' be the rightmost possible entry for the chosen `$1$' and `$3$'. This yields the diagram in Figure~\ref{figure:simples-case2-1}. Any entry in cell $B$ leads to an unsplittable interval inside the rectangular hull formed by any entries in cell $B$ together with the entry just above and to the right of cell $B$. Hence, there must be an entry in cell $A$ to split the rectangular hull of the last two entries shown in the figure. Pick this entry to be as far to the left as possible. To avoid $3124$ and $4312$ patterns, we have the monotone conditions shown in Figure~\ref{figure:simples-case2-2}.
	
	We now show that cell $E$ is empty. Assume it has some entry. Then, the rectangular hull of the leftmost five entries of $\pi$ shown in Figure~\ref{figure:simples-case2-3} must be split in order for $\pi$ to be simple. There is only one place (cell $F$) where we could possibly have a separating element. However, considering the leftmost entry in cell $F$, it is now clear that the rectangular hull of cell $F$ together with the previously considered interval cannot be split. Hence, cell $E$ is empty.
	
	Lastly, any entries in the cells $C$ and $D$ of Figure~\ref{figure:simples-case2-2} together must be increasing, since any descent contained in these two cells would form an unsplittable interval inside of cells $C$ and $D$. So, we have shown that in this case, $\pi$ can be drawn on the diagram in Figure~\ref{figure:simples-case2-4}. Thus, $\pi$ can be drawn on the standard figure of $\GG_1$. 
			
	This completes the proof that $\Simples(\Av(3124,4312)) = \Simples(\GG_1 \cup \GG_2)$.
\end{proof}


\section{The Regular Language and Inflations of $\GG_1$}\label{section:g1}
 
The standard figure for $\GG_1$ is shown in Figure~\ref{figure:our-classes}, along with the directional arrows corresponding to a consistent orientation.

	
Albert, Atkinson, and Vatter \cite[Section 5]{albert:inflations-case-studies} determined the regular languages that are in bijection with $\GG_1$ and $\Simples(\GG_1)$. We repeat this derivation here because it is a good introduction to the following two sections.

Recall the standard notation for regular languages: if $x$ is a letter (or a set of letters), then ``$x^*$'' means zero or more occurrences of $x$ and ``$x^+$'' means one or more occurrences of $x$.

As discussed in Section~\ref{section:ggc-info}, there are two impediments to the bijectivity of the map $\phi : \Sigma^* \to \GG_1$. Firstly, we prevent duplicate words that arise as a result of \emph{commuting pairs} of cells. In this geometric grid class, the set of commuting pairs is $\{(a,c),(a,d),(b,d)\}$. Therefore, to prevent these duplicate words, we forbid all occurrences of $ca$, $da$, and $db$.

Next, we must prevent duplicate words that arise from moving some entry to a different cell. Among all such duplicate words, we choose to prefer the word that has the most entries in the first column, then the most entries in the second column, and then the most entries in the first row.

We define $\LL_1$ to be the regular language consisting of all words $\Sigma^*$, with the following restrictions.
\begin{itemize}
	\item As above, we forbid all words that contain $ca$, $da$, or $db$.
	\item If a word begins with $b$, then the corresponding entry could be moved to cell $a$. Hence, we forbid all words that begin with $b$.
	\item If a word begins with $a^*c$, then entry corresponding to the $c$ could be moved into cell $a$. Hence, we forbid all words that begin with $a^*c$.
	\item If a word ends with $d$, then the entry corresponding to the $d$ could be moved into cell $c$. Hence, we forbid all words that end with $d$.
	\item If a word starts with $d$, has no $c$, and has no other $d$, then the entry corresponding to the $d$ could be moved into cell $c$. Hence, we forbid all words of the form $d\{a,b\}^*$.
	\item If a word is of the form $a^*\{c,d\}^+$, then all entries corresponding to $c$ and $d$ could be moved into cells $a$ and $b$. Hence, we forbid all words of this form. 
\end{itemize}

Then, $\LL_1$ is in (length-preserving) bijection with the geometric grid class $\GG_1$. We compute the multivariate generating function for this regular language (either by hand using the techniques of~\cite[Section I.4]{flajolet:ac}, or using a computer algebra system), in which each letter $a,b,c,d$ is represented by a variable $x_a,x_b,x_c,x_d$ and the coefficient of the term $x_a^{p_1}x_b^{p_2}x_c^{p_3}x_d^{p_4}$ is the number of words in $\LL_1$ that have $p_1$ occurrences of the letter $a$, $p_2$ occurrences of the letter $b$, etc. This multivariate generating function is
	\[ \frac{x_{a}  - x_{a}^{2} - 2  x_{a} x_{c} - 2 x_{a} x_{d} + 2 x_{d}^{2} + 2  x_{a}^{2} x_{c}  + x_{b} x_{c} x_{d} - x_{a}^{2} x_{c}^{2} - 2  x_{a}^{2} x_{c} x_{d} - x_{a}^{2}x_{d}^{2} - x_{b} x_{c} x_{d}^{2}  }{{\left(1 - x_{a}\right)} {\left(1 - x_{c} - x_{d}\right)} {\left(1 - x_{a} - x_{b}- x_{c} - x_{d}  + x_{a} x_{c} + x_{a} x_{d} + x_{b} x_{d} \right)}}.\]
	
We can find the univariate generating function for $\GG_1$ by setting all four variables to $x$. The univariate generating function is:
	\[\frac{ x-4x^2+5x^3 }{ (1-x)(1-2x)(1-3x) }.\]

We will now restrict $\LL_1$ to a new regular language $\calSS_1$ that is in  bijection with $\Simples(\GG_1)$. Permutations that are not simple arise due to either repeated letters (an interval in one cell) or one of the shaded regions involving two or more cells shown in Figure~\ref{figure:g1-simple-regions}. So, we make the following restrictions.

	\begin{figure}
		\begin{center}
			\begin{tikzpicture}[scale=1, baseline=(current bounding box.center)]
				\filldraw[lightgray] (0.5,0.5) rectangle (1.5,1.5);
				\draw[thick] (0,0) rectangle (3,2);
				\foreach \i in {1,2}{
					\draw[thick] (0,\i)--(3,\i);
					\draw[thick] (\i,0)--(\i,2);
				}
				\draw[thick] (3,0)--(3,2);
				\draw[ultra thick,->] (.9,.9)--(.1,.1);
				\draw[ultra thick,<-] (1.9,1.9)--(1.1,1.1);
				\draw[ultra thick,<-] (2.1,1.9)--(2.9,1.1);
				\draw[ultra thick,->] (1.1,.9)--(1.9,.1);
				\node[above left] at (1,0) {$a$};
				\node[above right] at (1,0) {$b$};
				\node[above left] at (2,1) {$c$};
				\node[above right] at (2,1) {$d$};
			\end{tikzpicture}
			\qquad\qquad\qquad
			\begin{tikzpicture}[scale=1, baseline=(current bounding box.center)]
				\filldraw[lightgray] (1.5,1.5) rectangle (2.5,2);
				\draw[thick] (0,0) rectangle (3,2);
				\foreach \i in {1,2}{
					\draw[thick] (0,\i)--(3,\i);
					\draw[thick] (\i,0)--(\i,2);
				}
				\draw[thick] (3,0)--(3,2);
				\draw[ultra thick,->] (.9,.9)--(.1,.1);
				\draw[ultra thick,<-] (1.9,1.9)--(1.1,1.1);
				\draw[ultra thick,<-] (2.1,1.9)--(2.9,1.1);
				\draw[ultra thick,->] (1.1,.9)--(1.9,.1);
				\node[above left] at (1,0) {$a$};
				\node[above right] at (1,0) {$b$};
				\node[above left] at (2,1) {$c$};
				\node[above right] at (2,1) {$d$};
			\end{tikzpicture}
			\qquad\qquad\qquad
			\begin{tikzpicture}[scale=1, baseline=(current bounding box.center)]
				\filldraw[lightgray] (0.5,0.5) rectangle (3,2);
				\draw[thick] (0,0) rectangle (3,2);
				\foreach \i in {1,2}{
					\draw[thick] (0,\i)--(3,\i);
					\draw[thick] (\i,0)--(\i,2);
				}
				\draw[thick] (3,0)--(3,2);
				\draw[ultra thick,->] (.9,.9)--(.1,.1);
				\draw[ultra thick,<-] (1.9,1.9)--(1.1,1.1);
				\draw[ultra thick,<-] (2.1,1.9)--(2.9,1.1);
				\draw[ultra thick,->] (1.1,.9)--(1.9,.1);
				\node[above left] at (1,0) {$a$};
				\node[above right] at (1,0) {$b$};
				\node[above left] at (2,1) {$c$};
				\node[above right] at (2,1) {$d$};
			\end{tikzpicture}
		\end{center}
		\caption{The three regions in which an interval containing entries from at least two cells can occur. This figure also appears in \cite[Figure 12]{albert:inflations-case-studies}.}
		\label{figure:g1-simple-regions}
	\end{figure}
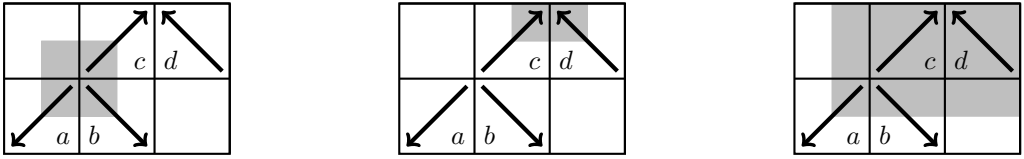
	
	\begin{itemize}
		\item We exclude any words that contain consecutive occurrences of any letter: $aa$, $bb$, $cc$, or $dd$. 
		\item To avoid intervals of the first type, we forbid all words that begin with two or more occurrences of $\{a,b,c\}$. 
		\item To avoid intervals of the second type, we forbid all words that end with $ca^*da^*$ or $d\{a,b\}^*ca^*$.
		\item To avoid intervals of the third type, we require that the last $a$ is followed by a $b$, i.e., we forbid all words that end with $a\{c,d\}^*$.
		\item Lastly, we explicitly forbid the word $dcb$, which does not correspond to a simple permutation but is not forbidden by any of the previous rules.
	\end{itemize}
		
	From these rules, we can find the multivariate generating function of $\calSS_1$:
		\[S_1(x_a, x_b, x_c, x_d) = \frac{ x_{b} x_{c} x_{d}{\left(1 + x_{b}\right)} {\left(x_{a}+ x_{c} + x_{a} x_{c} + x_{c} x_{d} \right)}}{1 - x_{a} x_{b} - x_{b} x_{c} - x_{c} x_{d} - x_{a} x_{b} x_{c} - x_{b} x_{c} x_{d}}.\]
	In particular, this shows that every simple permutation of $\GG_1$ has at least one entry in each of cells $b$, $c$, and $d$. Note that this multivariate generating function excludes the permutation of length 1 and both permutations of length 2. This is because we don't need to consider inflations of the permutation $1$ and we will handle inflations of $12$ and $21$ separately.
	
	We can see that the univariate generating function of $\Simples(\GG_1)$ of length at least 4 is thus:
		\[M_1(x) = \frac{2x^4}{1-2x}.\]
	
	Lastly, we must determine the ways in which we can inflate a simple permutation of $\GG_1$ to yield a permutation in $\Av(3124,4312)$. For the remainder of the paper, we define the two functions
		\[m = \f{x}{1-x}\]
	to be the generating function for nonempty increasing (or decreasing) permutations, and
		\[c = \f{1-2x-\sqrt{1-4x}}{2x}\]
	to be the generating function for nonempty permutations in $\Av(312)$, which are counted by the Catalan numbers. Additionally, we will let $f$ denote the generating function for $\Av(3124,4312)$.
	
	In order to find the allowed inflations of a simple permutation in $\GG_1$, we need to split the letter $c$ into two letters $c_1$ and $c_2$. A $c$ is a $c_1$ if there is any $b$ following it, and a $c_2$ otherwise. By simplicity, there is at most one $c_2$. Additionally, we must split $\Simples(\GG_1)$ into two types of permutations. Let a permutation $\pi \in \Simples(\GG_1)$ be \emph{Type A} if its corresponding word meets the following criteria:
	\begin{itemize}
		\item there is no $c$ after any $b$,
		\item there is no $d$ after any $c$,
		\item the first $b$ is before any $a$.
	\end{itemize}
	
There is one simple permutation of each odd length in $\Simples(\GG_1)$ that meets these criteria (the first few are $25314$, $2475316$, and $246975318$). By construction, there is no $c_2$ in any Type A permutation; every $c$ is a $c_1$. 

	It follows that the multivariate generating function for the Type A permutations in $\Simples(\GG_1)$ is
		\[S_{1,1}\pa{x_a, x_b, x_{c_1}, x_{c_2}, x_d} = \f{x_ax_b^2x_{c_1}x_e}{1-x_ax_b}.\]
	The remainder of the simple permutations of length at least 4 in $\GG_1$ (we will call them \emph{Type B}) thus have multivariate generating function
		\[S_{1,2} = S_1 - S_{1,1}.\]
	
	In Type A permutations, we can inflate any entry in cell $a$ by any permutation in $\Av(312)$. We can inflate any entries in cell $b$ by any decreasing permutation except for the first entry in cell $b$, which can be inflated by any permutation in $\Av(312)$. Any entry in cell $c$ (which must be a $c_1$) may be inflated by any increasing permutation. Entries in cell $d$ may be inflated by permutations in $\Av(312)$. Therefore, the generating function for inflations of simple permutations of Type A that are still in $\Av(3124,4312)$ is
		\[\f{c}{m} \cdot S_{1,1}\pa{c,m,m,0,c}.\]

	In Type B permutations, we consider two cases. In both cases, entries in cell $a$ can be inflated by permutations in $\Av(312)$, entries in cell $b$ can be inflated by decreasing permutations, and the $c_1$ entries in cell $c$ can be inflated by increasing permutations. If a $c_2$ entry in cell $c$ is inflated only by an increasing permutation, then the first entry in cell $d$ may be inflated by a permutation in $\Av(312)$ while all other entries in cell $d$ can only be inflated by decreasing permutations. Otherwise, if the $c_2$ entry is inflated by a permutation with a descent (i.e., a permutation in $\Av(3124,4312) \ssm \Av(21)$), then this forces all entries in cell $d$ to be inflated by only decreasing permutations.
		
	Define $S_{1,3} = S_{1,2} - \pa{S_{1,2}\res{x_{c_2} = 0}}$, so that $S_{1,3}$ is the multivariate generating function of Type B permutations that contain a $c_2$ entry. Now, the generating function for inflations of simple permutations of Type B that are still in $\Av(3124,4312)$ is
		\[\f{c}{m} \cdot S_{1,2}\pa{c,m,m,m,m} + S_{1,3}\pa{c,m,m,f-m,m}.\]
		
	Combining the above results, the (univariate) generating function for the inflations of simple permutations of length at least 4 from $\GG_1$ that lie in $\Av(3124,4312)$ is
		\[I_1(x) = \f{c}{m} \cdot S_{1,1}\pa{c,m,m,0,c} + \f{c}{m} \cdot S_{1,2}\pa{c,m,m,m,m} + S_{1,3}\pa{c,m,m,f-m,m}.\]

\section{The Regular Language and Inflations of $\GG_2$}\label{section:g2}

The standard figure for $\GG_2$ is shown in Figure~\ref{figure:our-classes}, along with the directional arrows corresponding to a consistent orientation. In this geometric grid class, the set of commuting pairs is
	\[\{(a,b),(a,c),(a,d),(a,f),(b,d),(b,e),(b,f),(c,e),(c,f),(d,e),(d,f)\}.\]
	
We note that the three letters $a$, $e$, and $f$ commute with the three letters $b$, $c$, and $d$. So, we forbid all words in which any $b$, $c$, or $d$ comes before any $a$, $e$, or $f$. This handles all commuting pairs except for $(a,f)$ and $(b,d)$, so we further forbid all words containing either $fa$ or $db$.

Next, we must prevent duplicate words that arise from moving some entry to a different cell. Among all such duplicate words, we prefer the word that has the most entries in the first column, then the second column, then the third column, then the first row, then the second row, and then the third row. 

We define $\LL_2$ to be the regular language consisting of all words $\{a,b,c,d,e,f\}^*$, with the following restrictions.

\begin{itemize}
	\item As above, we forbid all words that contain the factor $\{b,c,d\}\{a,e,f\}$ as well as all words containing either $fa$ or $db$.
	\item If a word begins with $e$, then the corresponding entry can be moved to the end of cell $c$. Hence, we forbid all words that start with $e$.
	\item We can move an entry in cell $d$ to cell $b$ if the $d$ has no $c$ or $f$ before it and no $f$ after it. Hence, we forbid all words of the form $\{a,b,d,e\}^*d\{a,b,c,d,e\}^*$.
	\item If a word has no $f$, then any entry in cell $b$ can be moved to cell $a$ (by also moving some entries from cell $c$ to cell $e$ and some entries from cell $d$ to cell $f$, as needed). Hence, we forbid words of the form $\{a,b,c,d,e\}^*b\{a,b,c,d,e\}^*$.
	\item Consider a word that ends in the form $b\{a,e,f\}^*d^*\{a,b,c,e,f\}^*$. Then, the entry in cell $b$ can be moved into cell $a$ (by also moving some entries in cell $c$ to cell $e$ as needed). Hence, we forbid all words that end with $b\{a,e,f\}^*d^*\{a,b,c,e,f\}^*$.
	\item If a word has an $f$ which has no $b$, $c$, or $e$ before it and no $b$ or $c$ after it, then the first $f$ can be moved to cell $c$. Hence, we forbid words of the form $\{a,d,f\}^*f\{a,d,e,f\}^*$.
	\item If a word starts with the prefix $\{a,e,f\}^*c$, then the entry in cell $c$ can be moved to cell $b$. Hence, we forbid words that start with $\{a,e,f\}^*c$.
\end{itemize}

The language $\LL_2$ is in (length-preserving) bijection with the geometric grid class $\GG_2$. We can compute the multivariate generating function for this regular language, but it is too long to display here. We find the univariate generating function for $\GG_2$ by setting all six variables to $x$. The univariate generating function is
	\[\frac{{x-7x^2+19x^3-22x^4+9x^5-x^6}}{{\left(1-x\right)} {\left(1-2x\right)} {\left(1-3x+x^2\right)}^{2}}.\]

We now restrict $\LL_2$ to a new regular language $\calSS_2$ that is in bijection with $\Simples(\GG_2)$ (excluding the permutations $1$, $12$, and $21$). Permutations that are not simple arise due to either repeated letters (an interval in one cell) or one of the shaded regions involving two or more cells shown in Figure~\ref{figure:g2-simple-regions}. So, we make the following restrictions.

	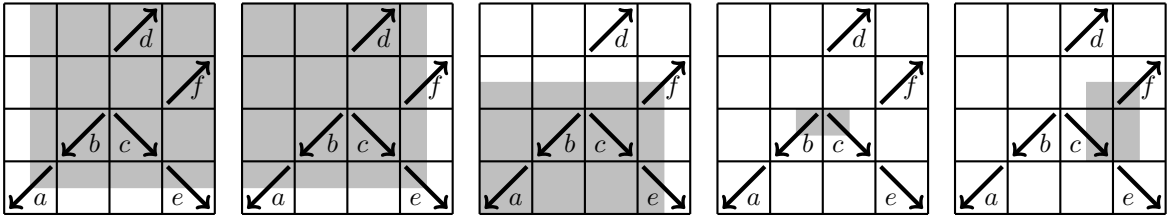
\begin{figure}
		\begin{center}
			\begin{tikzpicture}[scale=.7, baseline=(current bounding box.center)]
				\filldraw[lightgray] (0.5,0.5) rectangle (4,4);
				\foreach \i in {0,1,2,3,4}{
					\draw[thick] (0,\i)--(4,\i);
					\draw[thick] (\i,0)--(\i,4);
				}
				\draw[ultra thick,->] (.9,.9)--(.1,.1);
				\draw[ultra thick,->] (1.9,1.9)--(1.1,1.1);
				\draw[ultra thick,->] (2.1,1.9)--(2.9,1.1);
				\draw[ultra thick,->] (3.1,.9)--(3.9,.1);
				\draw[ultra thick,->] (2.1,3.1)--(2.9,3.9);
				\draw[ultra thick,->] (3.1,2.1)--(3.9,2.9);
				\node[above left] at (1,0) {$a$};
				\node[above left] at (2,1) {$b$};
				\node[above right] at (2,1) {$c$};
				\node[above left] at (3,3) {$d$};
				\node[above left] at (4,2) {$f$};
				\node[above right] at (3,0) {$e$};
			\end{tikzpicture}
			\;
			\begin{tikzpicture}[scale=.7, baseline=(current bounding box.center)]
				\filldraw[lightgray] (0,4) rectangle (3.5,0.5);
				\foreach \i in {0,1,2,3,4}{
					\draw[thick] (0,\i)--(4,\i);
					\draw[thick] (\i,0)--(\i,4);
				}
				\draw[ultra thick,->] (.9,.9)--(.1,.1);
				\draw[ultra thick,->] (1.9,1.9)--(1.1,1.1);
				\draw[ultra thick,->] (2.1,1.9)--(2.9,1.1);
				\draw[ultra thick,->] (3.1,.9)--(3.9,.1);
				\draw[ultra thick,->] (2.1,3.1)--(2.9,3.9);
				\draw[ultra thick,->] (3.1,2.1)--(3.9,2.9);
				\node[above left] at (1,0) {$a$};
				\node[above left] at (2,1) {$b$};
				\node[above right] at (2,1) {$c$};
				\node[above left] at (3,3) {$d$};
				\node[above left] at (4,2) {$f$};
				\node[above right] at (3,0) {$e$};
			\end{tikzpicture}
			\;
			\begin{tikzpicture}[scale=.7, baseline=(current bounding box.center)]
				\filldraw[lightgray] (0,0) rectangle (3.5,2.5);
				\foreach \i in {0,1,2,3,4}{
					\draw[thick] (0,\i)--(4,\i);
					\draw[thick] (\i,0)--(\i,4);
				}
				\draw[ultra thick,->] (.9,.9)--(.1,.1);
				\draw[ultra thick,->] (1.9,1.9)--(1.1,1.1);
				\draw[ultra thick,->] (2.1,1.9)--(2.9,1.1);
				\draw[ultra thick,->] (3.1,.9)--(3.9,.1);
				\draw[ultra thick,->] (2.1,3.1)--(2.9,3.9);
				\draw[ultra thick,->] (3.1,2.1)--(3.9,2.9);
				\node[above left] at (1,0) {$a$};
				\node[above left] at (2,1) {$b$};
				\node[above right] at (2,1) {$c$};
				\node[above left] at (3,3) {$d$};
				\node[above left] at (4,2) {$f$};
				\node[above right] at (3,0) {$e$};
			\end{tikzpicture}
			\;
			\begin{tikzpicture}[scale=.7, baseline=(current bounding box.center)]
				\filldraw[lightgray] (1.5,1.5) rectangle (2.5,2);
				\foreach \i in {0,1,2,3,4}{
					\draw[thick] (0,\i)--(4,\i);
					\draw[thick] (\i,0)--(\i,4);
				}
				\draw[ultra thick,->] (.9,.9)--(.1,.1);
				\draw[ultra thick,->] (1.9,1.9)--(1.1,1.1);
				\draw[ultra thick,->] (2.1,1.9)--(2.9,1.1);
				\draw[ultra thick,->] (3.1,.9)--(3.9,.1);
				\draw[ultra thick,->] (2.1,3.1)--(2.9,3.9);
				\draw[ultra thick,->] (3.1,2.1)--(3.9,2.9);
				\node[above left] at (1,0) {$a$};
				\node[above left] at (2,1) {$b$};
				\node[above right] at (2,1) {$c$};
				\node[above left] at (3,3) {$d$};
				\node[above left] at (4,2) {$f$};
				\node[above right] at (3,0) {$e$};
			\end{tikzpicture}
			\;
			\begin{tikzpicture}[scale=.7, baseline=(current bounding box.center)]
				\filldraw[lightgray] (2.5,1) rectangle (3.5,2.5);
				\foreach \i in {0,1,2,3,4}{
					\draw[thick] (0,\i)--(4,\i);
					\draw[thick] (\i,0)--(\i,4);
				}
				\draw[ultra thick,->] (.9,.9)--(.1,.1);
				\draw[ultra thick,->] (1.9,1.9)--(1.1,1.1);
				\draw[ultra thick,->] (2.1,1.9)--(2.9,1.1);
				\draw[ultra thick,->] (3.1,.9)--(3.9,.1);
				\draw[ultra thick,->] (2.1,3.1)--(2.9,3.9);
				\draw[ultra thick,->] (3.1,2.1)--(3.9,2.9);
				\node[above left] at (1,0) {$a$};
				\node[above left] at (2,1) {$b$};
				\node[above right] at (2,1) {$c$};
				\node[above left] at (3,3) {$d$};
				\node[above left] at (4,2) {$f$};
				\node[above right] at (3,0) {$e$};
			\end{tikzpicture}
		\end{center}
		\caption{The five regions that are not already forbidden by previous rules in which an interval containing entries from at least two cells can occur.}
		\label{figure:g2-simple-regions}
	\end{figure}
	
	\begin{itemize}
		\item We exclude any words that contain consecutive occurrences of any letter: $aa$, $bb$, $cc$, $dd$, $ee$, or $ff$. 
		\item To avoid intervals of the first type, we require that words do not end with $a\{a,b,c,d,f\}^*$. 
		\item To avoid intervals of the second type, we require that words do not end with $e\{b,c,d,e\}^*$.
		\item To avoid intervals of the third type, we forbid words of the form $\{a,b,c,e,f\}^*f\{a,b,c\}^*$.
		\item To avoid intervals of the fourth type, we must exclude any words that begin with \linebreak $\{a,b,c,e,f\}^*b\{a,b,c,e,f\}^*c$.
		\item To avoid intervals of the last type,  we forbid all words of the form $\{a,d,f\}^*f\{a,d,e,f\}^*c\{a,c,e,f\}^*$.
	\end{itemize}
	 	
	From these rules, we can find the multivariate generating function of $\calSS_2$:
		\[\frac{{x_{d} x_{f}\left(1 + x_{c}\right)} {\left( x_{a} x_{e} + x_{c} x_{d} + x_{b} x_{c} x_{d}  - x_{a} x_{b} x_{c} x_{e} - x_{a} x_{c} x_{d} x_{e}- x_{a} x_{b} x_{c} x_{d} x_{e}\right)} }{{\left(1 - x_{b} x_{c} - x_{c} x_{d}  - x_{b} x_{c} x_{d}\right)} {\left(1 - x_{a}x_{e} - x_{e} x_{f}  - x_{a} x_{e} x_{f} \right)}}.\]

	The univariate generating function of $\Simples(\GG_2)$ of length at least 4 is thus
		\[M_2(x) = \frac{{x^4\left(1-x\right)} {\left(2+x\right)} }{{\left(1-x-x^2\right)}^{2}}.\]

	In order to find the allowed inflations of a simple permutation in $\GG_2$, we need to split the letters $c$ and $d$ each into two letters. We say that a $c$ is a $c_2$ if there is no $b$ or $c$ before it and it does not simultaneously have a $d$ both before and after it. It is a $c_1$ otherwise. We say that a $d$ is a $d_2$ if the word has no $e$, the word has at most one $f$, and the $d$ has no $c$ after it. It is a $d_1$ otherwise. By simplicity, a word can have at most one $d_2$. The multivariate generating function for $\calSS_2$ with these new letters will be denoted $S_2\pa{x_a, x_b, x_{c_1}, x_{c_2}, x_{d_1}, x_{d_2}, x_e, x_f}$. 
		

	We handle three separate cases. In all cases, entries corresponding to $a$, $b$, and $c_2$ can be inflated by any permutation in $\Av(312)$, while entries corresponding to $c_1$ and $e$ can be inflated by decreasing permutations and entries corresponding to $d_1$ can be inflated by increasing permutations. The three cases below specify how entries corresponding to $d_2$ and $f$ may be inflated.
	
	In the case that the word has no $d_2$, all entries in cell $f$ can be inflated by permutations in $\Av(312)$. The multivariate generating function for the words in $\calSS_2$ that have no $d_2$ is defined to be \linebreak $S_{2,1}~=~S_2\res{x_{d_2}=0}$.
	
	In the case that the word has a $d_2$ and this $d_2$ is inflated by an increasing permutation, entries in cell $f$ may be inflated by any permutation in $\Av(312)$. If this $d_2$ contains a descent (i.e., is inflated by a permutation in $\Av(3124,4312)\ssm\Av(12)$), then entries in cell $f$ may only be inflated by decreasing permutations. The multivariate generating function for the words in $\calSS_2$ that have a $d_2$ is \linebreak $S_{2,2} = S_2 - S_{2,1}$.
	
	Combining the above results, the (univariate) generating function for the inflations of simple permutations of length at least 4 in $\GG_2$ is
		\[I_2(x) = S_{2,1}\pa{c,c,m,c,m,0,m,c} + S_{2,2}\pa{c,c,m,c,m,m,m,c} + S_{2,2}\pa{c,c,m,c,m,f-m,m,m}.\]

\section{The Regular Language and Inflations of $\GG_3$}\label{section:g3}

The standard figure for $\GG_3$ is shown in Figure~\ref{figure:our-classes}, along with the directional arrows corresponding to a consistent orientation. The set of commuting pairs is $\{(a,c),(a,d),(b,d),(c,d)\}$. Thus, we forbid words that contain $ca$, $da$, $db$, or $dc$.

Next, we must prevent duplicate words that arise from moving some entry to a different cell. Among all such duplicate words, we prefer the word that has the most entries in the first column, then the second column, then the first row, and then the second row.

We construct $\LL_3$ to be the regular language consisting of all words $\{a,b,c,d\}^*$, with the following restrictions.

\begin{itemize}
	\item To conform to the definition of a ``$\bullet$'' entry in a geometric grid class, we forbid all words that contain more than one $d$.
	\item As above, to avoid duplicate permutations due to commuting pairs, we forbid words that contain $ca$, $da$, $db$, or $dc$.
	\item If a word starts with $b$, then the corresponding entry could be moved to cell $a$. Hence, we forbid words that begin with $b$.
	\item If a word has no $d$ and starts with $a^*c$, then the entry corresponding to the $c$ could be moved into cell $a$. Thus, we forbid words of the form $a^*c\{a,b,c\}^*$.
	\item If a word has no $b$ and at least one $c$ or $d$, then the entry corresponding to a $c$ could be moved to cell $a$ or the entry corresponding to a $d$ could be moved to cell $b$. Thus, we forbid all words of the form $\{a,c,d\}^*\{c,d\}\{a,c,d\}^*$.
	\item If a word has no $c$ and has a $d$, then the entry corresponding to the $d$ can be moved into cell $c$. Thus, we forbid words of the form $\{a,b,d\}^*d\{a,b,d\}^*$.
\end{itemize}

The language $\LL_3$ is then in (length-preserving) bijection with the geometric grid class $\GG_3$. We can compute the multivariate generating function for this regular language, but again it is far too long to display here. 
We can find the univariate generating function for $\GG_3$ by setting all four variables to $x$. The univariate generating function is:
	\[\frac{x - 5x^2 + 10x^3 - 8x^4 + x^6}{{\left(1-x\right)}^{2} {\left(1-2x\right)} {\left(1-3x+x^2\right)}}.\]

We will now restrict $\LL_3$ to a new regular language $\calSS_3$ that is in bijection with $\Simples(\GG_3)$. Permutations that are not simple arise due to either repeated letters (an interval in one cell) or one of the shaded regions involving two or more cells shown in Figure~\ref{figure:g3-simple-regions}. So, we make the following restrictions.

	\begin{figure}
		\begin{center}
			\begin{tikzpicture}[scale=1, baseline=(current bounding box.center)]
				\filldraw[lightgray] (0.5,0.5) rectangle (3,3);
				\foreach \i in {0,1,2,3}{
					\draw[thick] (0,\i)--(3,\i);
					\draw[thick] (\i,0)--(\i,3);
				}
				\draw[ultra thick,->] (.9,.9)--(.1,.1);
				\draw[ultra thick,->] (1.1,2.1)--(1.9,2.9);
				\draw[ultra thick,->] (1.1,.9)--(1.9,.1);
				\draw[fill] (2.5,1.5) circle(.08);
				\node[above left] at (1,0) {$a$};
				\node[above right] at (1,0) {$b$};
				\node[above left] at (2,2) {$c$};
				\node[above left] at (3,1) {$d$};
			\end{tikzpicture}
			\qquad\qquad
			\begin{tikzpicture}[scale=1, baseline=(current bounding box.center)]
				\filldraw[lightgray] (0.5,0.5) rectangle (1.5,1);
				\foreach \i in {0,1,2,3}{
					\draw[thick] (0,\i)--(3,\i);
					\draw[thick] (\i,0)--(\i,3);
				}
				\draw[ultra thick,->] (.9,.9)--(.1,.1);
				\draw[ultra thick,->] (1.1,2.1)--(1.9,2.9);
				\draw[ultra thick,->] (1.1,.9)--(1.9,.1);
				\draw[fill] (2.5,1.5) circle(.08);
				\node[above left] at (1,0) {$a$};
				\node[above right] at (1,0) {$b$};
				\node[above left] at (2,2) {$c$};
				\node[above left] at (3,1) {$d$};
			\end{tikzpicture}
		\end{center}
		\caption{The two regions that are not already forbidden by previous rules in which an interval containing entries from at least two cells can occur.}
		\label{figure:g3-simple-regions}
	\end{figure}
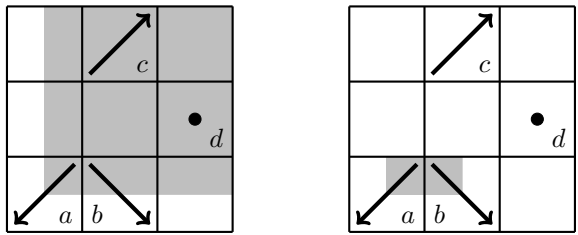

	\begin{itemize}
		\item We exclude any words that contain consecutive occurrences of any letter: $aa$, $bb$, $cc$. (We already can't have $dd$.)
		\item To prevent intervals of the first type, we require that there is a $b$ after the last $a$. Hence, we forbid words that end in $a\{c,d\}^*$.
		\item To prevent intervals of the second type, we forbid words that begin with $ab$.
		\item Lastly, we explicitly forbid the word $cbd$, which does not correspond to a simple permutation but is not forbidden by any previous rule.
	\end{itemize}
	 
	From these rules, we can find the multivariate generating function of $\calSS_3$:
		\[\frac{{x_bx_cx_d\left(1+x_b\right)} {\left(x_a + x_c + x_ax_c\right)}}{1 - x_{a} x_{b} - x_{b} x_{c} - x_{a} x_{b} x_{c}}. \]

	The univariate generating function of $\Simples(\GG_3)$ of length at least 4 is thus
		\[M_3(x) = \frac{{2x^4+x^5}}{1-x-x^2}.\]

	In order to find the allowed inflations of a simple permutation in $\GG_3$, we need to split the letters $b$ and $c$ each into two letters. A $b$ is a $b_2$ if there is no $a$, $b$, or $c_2$ before it and no $c$ after it. It is a $b_1$ otherwise. There is at most one $b_2$. A $c$ is a $c_2$ if there is no $b$ after it. It is a $c_1$ otherwise. By simplicity, a word has at most one $c_2$. The multivariate generating function for $\calSS_3$ using these new letters will be denoted $S_3\pa{x_a, x_{b_1}, x_{b_2}, x_{c_1}, x_{c_2}, x_{d}}$. 

	We handle three separate cases. In all cases, entries corresponding to $a$ and $b_2$ can be inflated by any permutation in $\Av(312)$, while entries corresponding to $b_1$ can be inflated by decreasing permutations and entries corresponding to $c_1$ can be inflated by increasing permutations. The three cases below specify how entries corresponding to $c_2$ and $d$ may be inflated.

	In the case that the word has no $c_2$, an entry in cell $d$ can be inflated by permutations in $\Av(312)$. The multivariate generating function for the words in $\calSS_3$ that have no $c_2$ is defined to be \linebreak $S_{3,1}~=~S_3\res{x_{c_2} = 0}$.
	
	If the word has a $c_2$ and that $c_2$ is inflated by an increasing permutation, then an entry in cell $d$ can be inflated by permutations in $\Av(312)$. If the word has a $c_2$ and that $c_2$ is inflated by a permutation containing a descent (i.e., a permutation in $\Av(3124,4312) \ssm \Av(21)$), then an entry in cell $d$ can only be inflated by decreasing permutations. The multivariate generating function for the words in $\calSS_3$ that have a $c_2$ is $S_{3,2}~=~S_3 - S_{3,1}$.
	
	Combining the above results, the (univariate) generating function for the inflations of simple permutations of length at least 4 in $\GG_3$ is
		\[I_3(x) = S_{3,1}\pa{c,m,c,m,0,c} + S_{3,2}\pa{c,m,c,m,m,c} + S_{3,2}\pa{c,m,c,m,f-m,m}.\]

\section{Computing the Generating Function of $\Av(3124,4312)$}\label{section:final-result}

Theorem~\ref{theorem:simples} proves that $\Simples(\Av(3124,4312)) = \Simples(\GG_1 \cup \GG_2)$. Therefore, we can count the simple permutations in $\Av(3124,4312)$ using the results from Sections~\ref{section:g1}, \ref{section:g2}, and \ref{section:g3}.

\begin{theorem}\label{theorem:simple-enum}
	The simple permutations in $\Av(3124,4312)$ are counted by the generating function
	\[
		S(x) =\frac{x - 2x^2 - 5x^3 + 12x^4 + x^5 - 8x^6 - 3x^7}{{\left(1 - 2x\right)} {\left(1 - x - x^2\right)}^{2}}.
	\]
\end{theorem}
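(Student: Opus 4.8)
The plan is to reduce the enumeration of $\Simples(\Av(3124,4312))$ to the three generating functions $M_1$, $M_2$, and $M_3$ already computed in Sections~\ref{section:g1}, \ref{section:g2}, and \ref{section:g3}, via a single inclusion--exclusion. By Theorem~\ref{theorem:simples}, $\Simples(\Av(3124,4312)) = \Simples(\GG_1 \cup \GG_2)$. Since the union of two permutation classes is, as a set of permutations, just the set-theoretic union (and again downward closed), a permutation is a simple element of $\GG_1 \cup \GG_2$ exactly when it is simple and lies in $\GG_1$ or in $\GG_2$. Hence $\Simples(\GG_1 \cup \GG_2) = \Simples(\GG_1) \cup \Simples(\GG_2)$ as sets of permutations, and I would apply ordinary inclusion--exclusion to these two sets.

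Next I would identify the intersection term. A permutation lies in $\Simples(\GG_1) \cap \Simples(\GG_2)$ precisely when it is simple and lies in both $\GG_1$ and $\GG_2$, i.e., when it is a simple permutation of $\GG_1 \cap \GG_2$. Using the observation from the start of Section~\ref{section:simples} that $\GG_3 = \GG_1 \cap \GG_2$, this intersection is exactly $\Simples(\GG_3)$. Passing to generating functions by length, and recalling that $M_1$, $M_2$, and $M_3$ enumerate the simple permutations of length at least $4$ in $\GG_1$, $\GG_2$, and $\GG_3$ respectively, inclusion--exclusion yields that the simple permutations of length at least $4$ in $\Av(3124,4312)$ are counted by $M_1(x) + M_2(x) - M_3(x)$.

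It remains to restore the simple permutations of length at most $3$. The only simple permutations of lengths $1$ and $2$ are $1$, $12$, and $21$ (contributing $x + 2x^2$), while a direct check shows no permutation of length $3$ is simple; all of these trivially avoid $3124$ and $4312$. Therefore
\[
	S(x) = x + 2x^2 + M_1(x) + M_2(x) - M_3(x).
\]
Substituting the closed forms $M_1 = \frac{2x^4}{1-2x}$, $M_2 = \frac{x^4(1-x)(2+x)}{(1-x-x^2)^2}$, and $M_3 = \frac{x^4(2+x)}{1-x-x^2}$, placing everything over the common denominator $(1-2x)(1-x-x^2)^2$, and simplifying the numerator gives the stated formula.

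The computation is routine. The only points requiring care are the two set identities $\Simples(\GG_1 \cup \GG_2) = \Simples(\GG_1) \cup \Simples(\GG_2)$ and $\Simples(\GG_1) \cap \Simples(\GG_2) = \Simples(\GG_3)$, and remembering that $M_1$, $M_2$, $M_3$ omit all permutations of length at most $3$, so that the $x + 2x^2$ correction is added exactly once and never double counted.
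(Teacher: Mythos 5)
Your proposal is correct and follows essentially the same route as the paper: invoke Theorem~\ref{theorem:simples}, apply inclusion--exclusion using $\GG_1 \cap \GG_2 = \GG_3$, add $x + 2x^2$ for the simple permutations of length at most $3$, and simplify $x + 2x^2 + M_1(x) + M_2(x) - M_3(x)$ over the common denominator $(1-2x)(1-x-x^2)^2$. The only difference is that you spell out the set identities $\Simples(\GG_1 \cup \GG_2) = \Simples(\GG_1) \cup \Simples(\GG_2)$ and $\Simples(\GG_1) \cap \Simples(\GG_2) = \Simples(\GG_3)$, which the paper leaves implicit.
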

\begin{proof}
We start by counting the permutation of length 1, the two permutations of length 2, the simple permutations in $\GG_1$, and the simple permutations in $\GG_2$. However, this double-counts the simple permutations that lie in both $\GG_1$ and $\GG_2$. Since $\GG_1 \cap \GG_2 = \GG_3$, we correct for this subtracting the generating function for the simple permutations in $\GG_3$. From this we see that the generating function for the simple permutations in $\Av(3124,4312)$ is
	\[S(x) = x + 2x^2 + \pa{M_1(x) + M_2(x) - M_3(x)} = \frac{x - 2x^2 - 5x^3 + 12x^4 + x^5 - 8x^6 - 3x^7}{{\left(1 - 2x\right)} {\left(1 - x - x^2\right)}^{2}},\]
completing the proof.	\end{proof}

The enumeration of $\Av(3214,4312)$ is now derived.

\begin{theorem}\label{theorem:class-enum}
	The permutations in $\Av(3124,4312)$ are counted by the generating function
	\[
		f(x) = \f{\pa{8x^5 - 16x^4 + 28x^3 - 26x^2 + 9x -1} + \sqrt{1-4x}\pa{2x^4-8x^3+14x^2-7x+1}}{2x^2(1-6x+9x^2-4x^3)}.
	\]
\end{theorem}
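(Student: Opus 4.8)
The plan is to decompose every nonempty $\pi \in \Av(3124,4312)$ according to its unique top-level simple permutation $\sigma$, as guaranteed by Lemma~\ref{lemma:decomposition}. Since there are no simple permutations of length $3$, the possibilities are $\sigma = 1$, $\sigma = 12$, $\sigma = 21$, or $\sigma$ a simple permutation of length at least $4$. I would write $f$ as the sum of the contributions from these four mutually exclusive cases and show that the resulting equation is \emph{linear} in $f$, so that it can be solved directly.

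The two extreme cases are immediate. The case $\sigma = 1$ contributes exactly the single point, namely $x$. For $\sigma$ of length at least $4$, Theorem~\ref{theorem:simples} identifies these simples with those of $\GG_1 \cup \GG_2$; since $\GG_3 = \GG_1 \cap \GG_2$, inclusion--exclusion shows that the generating function for all inflations of such simples that remain in $\Av(3124,4312)$ is precisely $I_1 + I_2 - I_3$, the quantities computed in Sections~\ref{section:g1}--\ref{section:g3}. (The set of legal inflations of a given simple $\sigma$ depends only on $\sigma$, not on which grid class we view it in, so subtracting $I_3$ correctly cancels the double count.)

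The heart of the argument is the two middle cases, where the key observation is how the forbidden patterns decompose: $3124 = 312 \oplus 1$ while $4312 = 1 \ominus 312 = 21 \ominus 12$. For a sum $\alpha \oplus \beta$ (with $\alpha$ sum indecomposable and $\beta$ nonempty, as Lemma~\ref{lemma:decomposition} requires), the pattern $4312$ cannot straddle the sum, and the only way $3124$ can straddle it is as a $312$ inside $\alpha$ together with a point of $\beta$; hence $\alpha \oplus \beta$ lies in the class if and only if $\alpha \in \Av(312)$ and $\beta \in \Av(3124,4312)$. Writing $\f{c}{1+c}$ for the generating function of sum-indecomposable members of $\Av(312)$, this gives $f_\oplus = \f{c}{1+c}\, f$. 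For a skew sum $\alpha \ominus \beta$ one argues dually using the two skew decompositions of $4312$: the pattern survives unless $\beta \in \Av(312)$ and, in addition, either $\alpha$ is increasing or $\beta$ is decreasing. Splitting on whether $\beta$ is decreasing, and letting $g = f - f_\ominus$ denote the generating function for skew-indecomposable members of the class, yields $f_\ominus = g\,m + m(c-m)$, which solves to $f_\ominus = \f{m(f+c-m)}{1+m}$.

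Combining the four contributions produces a single equation
\[
	f = x + \f{c}{1+c}\,f + \f{m(f+c-m)}{1+m} + I_1 + I_2 - I_3,
\]
in which each $I_i$ is itself affine in $f$, the dependence entering only through the substitution of $f-m$ for the unique $c_2$- or $d_2$-type letter in each class. I would then collect the coefficient of $f$, solve for $f$ as an explicit rational expression in $c$ and $m$, and finally substitute $m = \f{x}{1-x}$ and $c = \f{1-2x-\sqrt{1-4x}}{2x}$, simplifying to the stated closed form. I expect the main obstacle to be purely computational: assembling the long multivariate generating functions under the various inflation substitutions and then reducing the algebraic expression (using $xc^2 = (1-2x)c - x$ to control $\sqrt{1-4x}$) down to the compact form in the statement. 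This bookkeeping is best delegated to a computer algebra system, and a low-order check that the decomposition yields $a_1,\dots,a_4 = 1,2,6,22$ confirms the setup before the final simplification.
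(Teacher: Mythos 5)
Your proposal is correct and follows essentially the same route as the paper: decompose via Lemma~\ref{lemma:decomposition} into the point, sum-decomposable, skew-decomposable, and inflation-of-simples ($I_1+I_2-I_3$) cases, and solve the resulting equation that is affine in $f$. Your minor variations are only cosmetic --- you write the sum-indecomposable $\Av(312)$ generating function as $\f{c}{1+c}$ where the paper uses $xc+x$ (equal via $c=x(1+c)^2$), and you obtain $f_\ominus$ by splitting on whether the lower block is decreasing rather than by the paper's union-plus-inclusion-exclusion, but both yield the identical equation $f_\ominus = mc + (f-f_\ominus)m - m^2$.
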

\begin{proof}
The previous three sections have detailed the allowed inflations of simple permutations of length at least 4. By Lemma~\ref{lemma:decomposition}, it remains to determine the inflations of the permutations $12$ and $21$. To assure uniqueness, we require that the first component in the inflations of $12$ (respectively, $21$) be sum indecomposable (respectively, skew indecomposable).

Let $\pi = \sigma \oplus \tau \in \Av(3124,4312)$ be sum decomposable. We must have $\sigma \in \Av(312)$ and for uniqueness, we assume that $\sigma$ itself is sum indecomposable. For this, we use the notation $\sigma \in \Av_{\not\oplus}(312)$. Then, $\tau$ can be any permutation in $\Av(3124,4312)$. It is well-known that every permutation in $\Av_{\not\oplus}\pa{312}$ is of the form $\alpha \ominus 1$ for $\alpha \in \Av(312)$. Therefore, the class $\Av_{\not\oplus}\pa{312}$ is enumerated by the shifted Catalan numbers, which have the generating function $xc+x$. Now we see that the sum decomposable permutations in $\Av(3124,4312)$ are equal to
	\[\Av_{\not\oplus}(312) \oplus \Av(3124,4312)\]
and are enumerated by
	\[f_{\oplus} = (xc+x)f.\]
	
Let $\pi = \sigma \ominus \tau \in \Av(3124,4312)$ be skew decomposable. There are two possibilities. If $\sigma$ is increasing, then we must have $\tau \in \Av(312)$. Otherwise, if $\sigma \in \Av_{\not\ominus}\pa{4312,3124}$ has a descent, then we must have $\tau \in \Av(12)$. Hence, the skew decomposable permutations in $\Av(3124,4312)$ are equal to
	\[\pa{\Av(21) \ominus \Av(312)} \cup \pa{\Av_{\not\ominus}(3124,4312) \ominus \Av(12)}.\]
We enumerate this class by adding the enumerations of each component of the union and then subtracting the intersection of the two parts (which is $\Av(21) \ominus \Av(12)$). Let $f_{\ominus}$ be the generating function for the skew decomposable permutations in $\Av(3124,4312)$. Then, by the above reasoning,
	\(f_{\ominus} = mc + \pa{f - f_{\ominus}}m - m^2\)
which has the solution
	\[f_{\ominus} = \f{m\pa{f + c - m}}{1+m}.\]

The permutation class $\Av(3124,4312)$ contains the single permutation of length 1, the sum and skew decomposable permutations, and the inflations of simple permutations of length at least 4. Therefore, the generating function $f$ of $\Av(3124,4312)$ satisfies the equation
	\[f = x + (xc+x)f + \f{m\pa{f+c-m}}{1+m} + \pa{I_1 + I_2 - I_3}.\]
We solve this for $f$ to find that
	\[f(x) = \f{\pa{8x^5 - 16x^4 + 28x^3 - 26x^2 + 9x -1} + \sqrt{1-4x}\pa{2x^4-8x^3+14x^2-7x+1}}{2x^2(1-6x+9x^2-4x^3)}.\]
The first few terms of the expansion of $f(x)$ are
	\[f(x) = x + 2x^2 + 6x^3 + 22x^4 + 88x^5 + 363x^6 + 1507x^7 + 6241x^8 + 25721x^9 + 105485x^{10} + \cdots,\]
\OEIS{A165534}.\footnote{We may now compute the number of sum decomposable and skew decomposable permutations in $\Av(3124,4312)$:
	\[f_{\oplus} = \pa{xc+x}f = x^2 + 3x^3 + 10x^4 + 37x^5 + 146x^6 + 595x^7 + 2456x^8 + 10167x^9 + \cdots\]
(\OEIS{A226434}) and
	\[f_{\ominus} = \f{m\pa{f+c-m}}{1+m} = x^2 + 3x^3 + 10x^4 + 35x^5 + 129x^6 + 494x^7 + 1935x^8 + 7670x^9 + \cdots\]
(\OEIS{A228769}).
}\end{proof}

\section{Applicability to Other 2$\times$4 Classes}
\label{section:other-classes}

One may wonder whether these methods apply to any of the nine remaining 2$\times$4 classes that have not yet been enumerated. The key property of $\Av(3124,4312)$ that makes these arguments possible is that its simple permutations lie in a geometric grid class. It is easy to see that a permutation class is not geometrically griddable if it contains either arbitrarily long sums of the permutation $21$ or arbitrarily long skew sums of the permutation $12$.

To this end, we show that each of the remaining 2$\times$4 classes contains a family of simple permutations that contains arbitrarily long sums of $21$ or skew sums of $12$. A natural candidate is the family of increasing oscillations. The \emph{increasing oscillating sequence} is the infinite sequence 
	\[4, 1, 6, 3, 8, 5, \ldots, 2k+2, 2k-1, \ldots,\]
plotted in Figure~\ref{figure:osc-seq}. An \emph{increasing oscillation} is any simple permutation that is contained in the increasing oscillating sequence.  

Brignall, Ru\v{s}kuc, and Vatter \cite{brignall:simple-decidability} showed that the class of all permutations contained in all but finitely many increasing oscillations is $\io$. To show that a class $\CC=\Av(B)$ contains the family of all increasing oscillations, we need to show that $\io \subseteq \CC$. This amounts to checking that each $\beta \in B$ contains some permutation in $\{321,2341,3412,4123\}$. The following seven 2$\times$4 classes contain the family of increasing oscillations and hence their simple permutations are not geometrically griddable:
	\begin{align*}
		&\Av(3214,4231),\quad\Av(1432,4213),\quad\Av(3214,4312),\quad\Av(4231,4321),\\
		&\Av(4123,4231),\quad\Av(3412,4123),\quad\Av(4123,4312).
	\end{align*}
	
This leaves only the two classes $\Av(2143,4213)$ and $\Av(2413,3412)$, which contain neither the family of increasing oscillations nor the analogously defined family of decreasing oscillations. 

	\begin{figure}
		\minipage{0.5\textwidth}
		\begin{center}
			\begin{tikzpicture}[scale=.2]
				\draw[ultra thick] ({1-.07},0)--(12.07,0);
				\draw[ultra thick] (0,{1-.07})--(0,12.07);
				\foreach \x in {1,...,12} {
					\draw[thick] (\x,0)--(\x,-.5);
					\draw[thick] (0,\x)--(-.5,\x);
				}
				\draw[fill=black] (1,4) circle (10pt);
				\draw[fill=black] (2,1) circle (10pt);
				\draw[fill=black] (3,6) circle (10pt);
				\draw[fill=black] (4,3) circle (10pt);
				\draw[fill=black] (5,8) circle (10pt);
				\draw[fill=black] (6,5) circle (10pt);
				\draw[fill=black] (7,10) circle (10pt);
				\draw[fill=black] (8,7) circle (10pt);
				\draw[fill=black] (9,12) circle (10pt);
				\draw[fill=black] (10,9) circle (10pt);
				\node at (10.9,11.3) {$\cdot$};
				\node at (11.5,11.9) {$\cdot$};
				\node at (12.1,12.5) {$\cdot$};
			\end{tikzpicture}
		\end{center}
		\caption{The increasing oscillating sequence, plotted in the style of a permutation plot. Note that there is no index that has entry 2.}
		\label{figure:osc-seq}
		\endminipage\hfill
		\minipage{0.5\textwidth}
		\begin{center}
			\begin{tikzpicture}[scale=.2, baseline=(current bounding box.south)]
				\draw[ultra thick] ({1-.073},0)--(14.073,0);
				\draw[ultra thick] (0,{1-.073})--(0,14.073);
				\foreach \x in {1,...,14} {
					\draw[thick] (\x,0)--(\x,-.5);
					\draw[thick] (0,\x)--(-.5,\x);
				}
				\draw[fill=black] (1,10) circle (10pt);
				\draw[fill=black] (2,1) circle (10pt);
				\draw[fill=black] (3,8) circle (10pt);
				\draw[fill=black] (4,11) circle (10pt);
				\draw[fill=black] (5,9) circle (10pt);
				\draw[fill=black] (6,6) circle (10pt);
				\draw[fill=black] (7,12) circle (10pt);
				\draw[fill=black] (8,7) circle (10pt);
				\draw[fill=black] (9,4) circle (10pt);
				\draw[fill=black] (10,13) circle (10pt);
				\draw[fill=black] (11,5) circle (10pt);
				\draw[fill=black] (12,2) circle (10pt);
				\draw[fill=black] (13,14) circle (10pt);
				\draw[fill=black] (14,3) circle (10pt);
				\node at (15,14.5) {$\cdot$};
				\node at (15.6,14.7) {$\cdot$};
				\node at (16.2,14.9) {$\cdot$};
				\node at (14.5,1.5) {$\cdot$};
				\node at (15.1,1.1) {$\cdot$};
				\node at (15.7,.7) {$\cdot$};
			\end{tikzpicture}
		\end{center}
		\caption{A permutation in an infinite family of simple permutations.}
		\label{figure:other-family}
		\endminipage\hfill
	\end{figure}

Consider instead the family of simple permutations depicted in Figure~\ref{figure:other-family}. No permutation in this family contains either a $2143$ or a $4213$ pattern. Hence this infinite family, which cannot be geometrically gridded, lies in $\Av(2143,4213)$. The last class in question, $\Av(2413,3412)$, does not contain this infinite family, but it is symmetric (by a $90^\circ$ rotation) to $\Av(2143,2413)$ which does contain this infinite family. 

\bigskip
{\bf Acknowledgments:} The author is very grateful to his advisor, Vince Vatter, for introducing him to this problem and for advice that significantly improved the presentation of this paper. Additionally, the author would like to thank Michael Albert for providing code which was useful in developing these arguments. 


\bibliographystyle{acm}
\bibliography{./../../../refs/bib}

\end{document}